\documentclass[a4paper,12pt]{amsart}
\usepackage[T1]{fontenc}
\usepackage[english]{babel}
\usepackage[left=1in,right=1in,top=1.1in,bottom=1.1in]{geometry}

\allowdisplaybreaks

\usepackage{mathtools,amssymb,mathrsfs,amscd,comment}
\usepackage{times}
\usepackage[nopatch=footnote]{microtype}
\usepackage{enumitem}
\usepackage{tikz-cd}

\usepackage{xcolor}

\usepackage[pagebackref,linktocpage]{hyperref}

\hypersetup{
	hypertexnames=false,
	pdftitle={Height growth of rational self-maps},
	pdfauthor={Hu--Song},
	colorlinks=true,
	linkcolor=red,
	citecolor=blue,
	urlcolor=blue
}
\usepackage[nameinlink,noabbrev]{cleveref}

\newtheorem{theorem}{Theorem}[section]
\crefname{theorem}{Theorem}{Theorems}
\newtheorem{lemma}[theorem]{Lemma}
\crefname{lemma}{Lemma}{Lemmas}

\crefname{proposition}{Proposition}{Propositions}

\crefname{prop}{Proposition}{Propositions}
\newtheorem{corollary}[theorem]{Corollary}
\crefname{corollary}{Corollary}{Corollaries}

\crefname{cor}{Corollary}{Corollaries}

\crefname{conjecture}{Conjecture}{Conjectures}

\crefname{conj}{Conjecture}{Conjectures}
\newtheorem*{conj*}{Conjecture}
\crefname{conj}{Conjecture}{Conjectures}

\crefname{thmx}{Theorem}{Theorems}

\theoremstyle{definition}
\newtheorem{definition}[theorem]{Definition}
\crefname{definition}{Definition}{Definitions}

\crefname{defn}{Definition}{Definitions}

\crefname{example}{Example}{Examples}

\crefname{notation}{Notation}{Notation}
\newtheorem*{notation*}{Notation}
\crefname{notation}{Notation}{Notation}
\newtheorem*{convention*}{Convention}
\crefname{convention}{Convention}{Convention}

\crefname{problem}{Problem}{Problems}

\crefname{question}{Question}{Questions}

\crefname{condition}{Condition}{Conditions}

\crefname{assumption}{Assumption}{Assumptions}
\newtheorem{settinginner}{Setting}
\newenvironment{setting}[1]
  {\renewcommand{\thesettinginner}{(#1)}\begin{settinginner}}
  {\end{settinginner}}
\crefname{settinginner}{Setting}{Settings}
\Crefname{settinginner}{Setting}{Settings}

\theoremstyle{remark}

\crefname{rmk}{Remark}{Remarks}
\newtheorem*{rmk*}{Remark}
\crefname{rmk}{Remark}{Remarks}
\newtheorem{remark}[theorem]{Remark}
\crefname{remark}{Remark}{Remarks}

\crefname{fact}{Fact}{Facts}
\newtheorem{claim}[theorem]{Claim}
\crefname{claim}{Claim}{Claims}
\newtheorem*{claim*}{Claim}
\crefname{claim}{Claim}{Claims}

\crefname{step}{Step}{Steps}

\crefname{case}{Case}{Cases}

\numberwithin{equation}{section}

\DeclareMathOperator{\Pic}{Pic}

\newcommand{\Ebar}{\overline{E}}
\newcommand{\Hbar}{\overline{H}}
\newcommand{\Kbar}{\overline{K}}
\newcommand{\Lbar}{\overline{L}}
\newcommand{\Mbar}{\overline{M}}
\newcommand{\Nbar}{\overline{N}}

\newcommand{\ratmap}{\dashrightarrow}

\newcommand{\bC}{\mathbf{C}}

\newcommand{\bk}{\mathbf{k}}

\newcommand{\bP}{\mathbf{P}}
\newcommand{\bQ}{\mathbf{Q}}
\newcommand{\bR}{\mathbf{R}}

\newcommand{\bZ}{\mathbf{Z}}

\newcommand{\cH}{\mathcal{H}}

\newcommand{\cN}{\mathcal{N}}
\newcommand{\cO}{\mathcal{O}}
\newcommand{\cP}{\mathcal{P}}

\newcommand{\cX}{\mathcal{X}}

\newcommand{\NT}{\mathrm{NT}}

\newcommand{\sB}{\mathscr{B}}

\newcommand\bqed{\hfill $\blacksquare$}

\providecommand{\MR}[1]{}
\renewcommand{\MR}[1]{\href{https://mathscinet.ams.org/mathscinet-getitem?mr=#1}{MR#1}}

\newcommand{\FH}[1]{{\color{red} H: #1}}

\title[Height growth of rational self-maps]{Height growth of rational self-maps}

\author{Fei Hu}
\address{School of Mathematics, Nanjing University, Nanjing, China}
\email{fhu@nju.edu.cn}

\author{Jiarui Song}
\address{School of Mathematical Sciences, Peking University, Beijing, China}
\email{soyo999@pku.edu.cn}
\date{\today}

\subjclass[2020]{37P15, 11G50, 14G40, 14E05}

\keywords{Height growth, arithmetic dynamical degree, dynamical degree, dominant rational map, arithmetic Hodge index theorem, Hodge--Lorentz package}

\thanks{The first author was supported by NSFC Grant \#12371045.}

\begin{document}

\begin{abstract}
Let $K$ be a number field or a function field of a curve.
Let $f$ be a dominant rational self-map of a normal projective variety $X$ of dimension $d$ over $K$, and let $h_H\geq1$ be an ample Weil height.
Let $\lambda_1(f)$ denote the first dynamical degree of $f$.
We show that there is a constant $C_{f,h_H}>0$ such that for any $n\geq1$ and any point $P\in X(\overline{K})$ with well-defined forward orbit,
\[
h_H(f^n(P))\leq C_{f,h_H} h_H(P)
\begin{cases}
n^{2(d-1)}\lambda_1(f)^n, &\lambda_1(f)>1,\\
n^{2d}, &\lambda_1(f)=1.
\end{cases}
\]
Over number fields, the proof combines a new uniform upper bound for arithmetic degrees with a uniform comparison of point heights.
Over function fields, it relies on the corresponding uniform upper bound for geometric degrees on a projective model.
\end{abstract}

\maketitle


\section{Introduction}\label{sec:intro}

We work throughout in either of the following settings.

\begin{setting}{N}\label{setting:number-field}
Let $K$ be a number field.
Let $X$ be a normal projective variety of dimension $d$ and let $f\colon X\ratmap X$ be a dominant rational self-map, over $K$.
\end{setting}

\begin{setting}{F}\label{setting:function-field}
Let $B$ be a smooth projective curve over an algebraically closed field $\bk$ of arbitrary characteristic, and let $K\coloneqq\bk(B)$ be its function field.
Let $X$ be a normal projective variety of dimension $d$ and let $f\colon X\ratmap X$ be a dominant rational self-map, over $K$.
\end{setting}



Let $h_H$ be a logarithmic Weil height associated with an ample line bundle $H$ on $X$ and set $h_H^+\coloneqq\max\{1,h_H\}$.
We refer to \cite{GTM201,BG06} for Weil height theory.
Denote by $X_f(\Kbar)$ the set of points with well-defined forward orbit.

Matsuzawa \cite{Matsuzawa20a} first established the following uniform height estimate for smooth projective varieties, which was later extended by Xie \cite{Xie26-DML} to arbitrary projective varieties (see also \cite[Proof of Proposition~3.11]{JSXZ24}).
The estimate was originally stated by Kawaguchi and Silverman \cite{KS16b} (a weaker version has already appeared in \cite[Proposition~12]{Silverman14} for projective spaces).
However, there is a delicate gap in their proof concerning the dependence of certain constants on the iterate time pointed out by Kaoru Sano.

\begin{theorem}[{see \cite[Theorem~6]{Xie26-DML}, cf.~\cite[Theorem~1]{KS16b} and \cite[Theorem~1.4]{Matsuzawa20a}}]
Assume that either \cref{setting:number-field} or \cref{setting:function-field} holds.
Then for every $\epsilon>0$, there exists a constant $C_{f,h_H,\epsilon}>0$, such that for all $P\in X_f(\Kbar)$ and $n\geq1$,
\begin{equation}\label{eq:KS-M}
h_H^+(f^n(P))\leq C_{f,h_H,\epsilon}\bigl(\lambda_1(f)+\epsilon\bigr)^nh_H^+(P).
\end{equation}
\end{theorem}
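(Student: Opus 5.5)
The plan is to reduce the estimate to a submultiplicative bound for a single well-chosen iterate, and then to iterate that bound. Fix $\epsilon>0$. Since $\lambda_1(f)=\lim_m (\deg_H f^m)^{1/m}$, where $\deg_H f^m\coloneqq (f^m)^*H\cdot H^{d-1}$, I will pick $m$ so large that $\deg_H f^m\le (\lambda_1(f)+\epsilon/2)^m$, with the extra room needed to absorb the constants that appear below. Then I will compare the height of $f^m(P)$ with the height of $P$.

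The key step is to resolve the indeterminacy. Let $\Gamma_m$ be a normalization of the graph of $f^m$, with projections $\pi\colon\Gamma_m\to X$ and $g\colon\Gamma_m\to X$. On $\Gamma_m$ I want the following inequality of heights, valid away from a proper closed subset:
\[
h_{g^*H}\le C\,h_{\pi^*H}+O(1),\qquad C\approx (\text{const})\cdot \deg_H f^m .
\]
This is the main obstacle. It holds with some $C$ whenever $C\pi^*H-g^*H$ is big, but the constant must grow like $\deg_H f^m$ rather than some uncontrolled multiple of it. To get this I will use the Siu-type bigness criterion: $A-B$ is big once $A^d> d\,A^{d-1}\cdot B$ for nef classes $A,B$. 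Applying it with $A=C\pi^*H$ and $B=g^*H$ gives bigness when $C> d\,(\pi^*H^{d-1}\cdot g^*H)/H^d = d\deg_H f^m/H^d$. So $C$ is a fixed multiple of the degree, and the extra factor $d/H^d$ is harmless after taking $m$-th roots.

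Bigness then gives $C\pi^*H-g^*H\sim E$ effective, so $h_{g^*H}\le Ch_{\pi^*H}+O_m(1)$ off $\operatorname{Supp}E$. The exceptional locus is where the uniformity problem (Sano's gap) enters, and I expect this to be the delicate point. To handle it I will use that ampleness of $H$ lets one vary the effective divisor $E$ in its linear system $|kE|$, so the base locus is contained in the augmented base locus. That locus is still a problem. The fix is to use that $P\in X_f(\Kbar)$, so each $f^i(P)$ lies outside the indeterminacy locus. I will then treat points whose lift lies in $\mathbf B_+$ by Noetherian induction on the proper closed subvarieties $Z\subset X$ with $f^m|_Z$ defined, restricting $f$ to $Z$ and using $\lambda_1(f^m|_Z)$-type degree bounds, which are controlled by $\deg_H f^m$ as well.

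With the one-step bound $h_H^+(f^m(Q))\le C_m (\lambda_1+\epsilon/2)^m h_H^+(Q)$ established for all $Q$ with defined orbit, with a constant $C_m$ independent of $Q$, I iterate it. Writing $n=km+r$, I get $h^+_H(f^n P)\le C_m^k(\lambda_1+\epsilon/2)^{km}\,C'h^+_H(P)$, where $C'$ bounds the first $r<m$ steps. Choosing $m$ so that $C_m^{1/m}(\lambda_1+\epsilon/2)\le\lambda_1+\epsilon$ (possible since the multiplicative constant in front of $\deg_H f^m$ is independent of $m$ up to the additive $O_m(1)$, which is absorbed by $h^+\ge1$) yields \eqref{eq:KS-M}. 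The same argument works in both settings, using Weil heights over number fields or over the function field $K$.
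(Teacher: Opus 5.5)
The gap is at the step you flag as delicate, and the fix you propose does not close it. Siu's inequality on $\Gamma_m$ gives $h_{g^*H}\le C\,h_{\pi^*H}+O_m(1)$ only off the stable base locus $\mathbf{B}(C\pi^*H-g^*H)$. Since $C\pi^*H-g^*H$ is only big, not nef, this locus can contain strict transforms $\widetilde Z$ of subvarieties $Z\not\subset I(f^m)$, and the points $f^{km}(P)$ can land there. Take such a $Z$ of dimension $e$. For your Noetherian induction to keep a multiplicative constant of size $O(\deg_H f^m)$ on $Z$, Siu's criterion on $\widetilde Z$ needs
\[
\bigl(\pi^*H^{e-1}\cdot g^*H\cdot\widetilde Z\bigr)\le C_0\,\deg_H(f^m)\,\deg_H Z,
\]
with $C_0$ independent of $m$ and of $Z$. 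The $Z$'s themselves depend on $m$, so a $Z$-dependent constant is useless. You only assert this (``controlled by $\deg_H f^m$ as well''), and nothing in your setup implies it: Siu's inequality on $\Gamma_m$ says nothing about intersection numbers on $\widetilde Z$. This uniformity in $m$ is exactly what the Kawaguchi--Silverman argument lacked. Also, $f^m|_Z$ is a map $Z\ratmap X$, not a self-map of $Z$, so ``$\lambda_1(f^m|_Z)$'' is meaningless. What you need is the one-step degree displayed above.

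That inequality is true, and the tools of \S\ref{sec:proof} supply it. Put $H_1=H^{\otimes r}$ very ample. By \cref{lem:section_choose}, $Z$ is an irreducible component of $\mathrm{Bs}(s_1,\dots,s_{d-e})$ with $s_i\in H^0(X,H_1^{\otimes n_i})$ and $n_1\cdots n_{d-e}\le C\deg_{H_1}Z$. Because $\pi$ is an isomorphism over $X\setminus I(f^m)$, $\widetilde Z$ is a component of $\mathrm{Bs}(\pi^*s_1,\dots,\pi^*s_{d-e})$. Then \cite[Lemma~3.3]{JSXZ24} gives the bound with $C_0$ depending only on $(X,H)$; this is the mechanism of \cref{thm:ht-growth-bound-F}. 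On $\bQ$-factorial $X$ there is a shortcut. Write $g^*H=\pi^*\pi_*g^*H-E$ with $E\ge0$ lying over $I(f^m)$ (negativity lemma). Compactness of $\{D\in\overline{\mathrm{Eff}}(X):D\cdot H^{d-1}=1\}$ then makes $C_X\deg_H(f^m)H-\pi_*g^*H$ ample, which removes the induction altogether.

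Your final bookkeeping also needs repair. Absorbing $O_m(1)$ into $C_m$ and then choosing $m$ with $C_m^{1/m}$ small is circular, because $C_m$ is then at least the additive constant, which is uncontrolled in $m$. Instead, iterate the affine bound $h_H^+(f^mQ)\le A_m h_H^+(Q)+c_m$ with $A_m=\max\{2,C_0'\deg_H(f^m)\}$. This gives $h_H^+(f^{km}P)\le(1+c_m)A_m^k h_H^+(P)$, so only $A_m^{1/m}\to\lambda_1(f)$ matters.

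For comparison, the paper gets this statement as a consequence of \cref{thm:ht-growth-upper-bound}. Its key input, \cref{thm:ht-degree-bound-N}, bounds $h^+(g(P))$ by $C\,\widehat{\deg}_{1,\Hbar}(g)\,h^+(P)$ uniformly in $g$, with no additive constant, by cutting out the point $P$ itself (\cref{lem:subspace_cover,lem:nonproper_intersection}). That bound is applied to $g=f^n$ directly, so no exceptional locus and no iteration ever appear.
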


Here, for $0\leq i\leq d$, the $i$-th dynamical degree $\lambda_i(f)$ is a basic geometric invariant of the map, capturing the exponential growth rate of the $i$-th degrees $\deg_{i,H}(f^n)$ of the iterates $f^n$.
We refer to \cite{DS05a,Truong20,Dang20,Xie-SC} for their basic properties.
For surjective endomorphisms, Matsuzawa also obtained exponential-polynomial upper bounds with polynomial exponents depending on the Picard number \cite[Theorem~1.7]{Matsuzawa20a}.
See \cite{Sano18,Shibata19} for more on height growth of endomorphisms.

Recently, joint with Jiang \cite{HJ-degree-growth}, the first author developed a Hodge--Lorentz framework to study degree growth of endomorphisms, which was subsequently extended to study rational and meromorphic self-maps.
In particular, an exponential-polynomial upper bound for degree sequences has been established \cite[Theorem~1.1]{Hu-degree-growth}.
As an arithmetic application of this framework, the main purpose of this paper is to strengthen \eqref{eq:KS-M} by replacing the exponential loss with an explicit polynomial factor whose degree depends only on the dimension and the dynamical degree sequence $(\lambda_i(f))_i$.

For a finite real sequence $a=(a_0,\dots,a_m)$, an index interval $[r,s]$, with $0\leq r<s\leq m$, is \emph{affine} for $a$ if $2a_i-a_{i-1}-a_{i+1}=0$ for every integer $r<i<s$.
It is \emph{maximal affine} if it is not properly contained in any larger affine index interval.

\begin{theorem}\label{thm:ht-growth-upper-bound}
Assume that either \cref{setting:number-field} or \cref{setting:function-field} holds.
Then there exists a constant $C_{f,h_H}>0$ such that the following statements hold for every $P\in X_f(\Kbar)$ and $n\geq1$.
\begin{enumerate}[label=\textnormal{(\arabic*)}]
\item If $\lambda_1(f)>1$ and $[0,s]$ is a maximal affine interval of $(\log\lambda_i(f))_{i=0}^d$, then
\begin{equation}
h_H^+(f^n(P))\leq C_{f,h_H} n^{2(s-1)}\lambda_1(f)^n h_H^+(P).
\end{equation}
In particular,
\[
h_H^+(f^n(P))\leq C_{f,h_H} n^{2(d-1)}\lambda_1(f)^n h_H^+(P).
\]

\item If $\lambda_1(f)=1$, then
\begin{equation}
h_H^+(f^n(P))\leq C_{f,h_H} n^{2d} h_H^+(P).
\end{equation}
\end{enumerate}
\end{theorem}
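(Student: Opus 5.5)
The plan is to reduce the height estimate to a degree estimate for the iterates $f^n$. The degree estimate will come from the Hodge--Lorentz framework of \cite{Hu-degree-growth}, applied either arithmetically (number fields) or on a projective model over $B$ (function fields).

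\emph{Step 1: pointwise bound by a degree.}
Fix a resolution of indeterminacy $\Gamma_n\subset X\times X$ of $f^n$, with projections $\pi_1,\pi_2$. For a point $P$ with well-defined orbit we have $h_H(f^n(P))=h_{\pi_2^*H}(\tilde P)$, where $\tilde P$ is a lift of $P$. The aim is a uniform comparison
\[
h_H^+(f^n(P))\leq C\,\widehat{\deg}(f^n)\,h_H^+(P),
\]
with $C$ independent of $n$. Here $\widehat{\deg}(f^n)$ is an "arithmetic first degree": the arithmetic intersection number $\widehat{\pi_2^*\Hbar}\cdot\widehat{\pi_1^*\Hbar}^{d}$ on an integral model of $\Gamma_n$, or, over a function field, the geometric degree $\pi_2^*\cH\cdot\pi_1^*\cH^{d}$ on a model $\cX\to B$ with $\cH$ relatively ample.

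The mechanism is the standard one. The divisor $\widehat{\deg}(f^n)\,\pi_1^*\Hbar-\pi_2^*\Hbar$ is pseudo-effective up to a bounded error, by a Siu-type inequality in the arithmetic setting (Yuan's arithmetic Siu inequality, or Moriwaki's arithmetic bigness). Heights of pseudo-effective adelic divisors are bounded below off a base locus. To handle points lying in that base locus I will use Noetherian induction on the base loci; this is exactly the point where Xie's argument shows the constants can be made uniform in $n$. The comparison must not lose a factor depending on $n$; that is the "uniform comparison of point heights" mentioned in the abstract.

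\emph{Step 2: uniform upper bound for the arithmetic degree.}
Write $\widehat{\deg}(f^n)$ via intersection numbers on $\overline{\cX}$, a model of dimension $d+1$. The arithmetic degree sequence $\widehat{\deg}_i(f^n)$ satisfies log-concavity, which follows from the arithmetic Hodge index theorem (Khovanskii--Teissier in the arithmetic setting). It also satisfies a submultiplicativity $\widehat{\deg}_i(f^{m+n})\le C\,\widehat{\deg}_i(f^m)\,\widehat{\deg}_i(f^n)$, up to a uniform constant, together with the relation to the geometric degrees $\deg_i(f^n)$ of the generic fibre. This is the Hodge--Lorentz package. I will then run the argument of \cite[Theorem~1.1]{Hu-degree-growth} on the arithmetic sequence, whose relevant dynamical degrees are $1=\lambda_0,\lambda_1,\dots,\lambda_d$. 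The "arithmetic direction" contributes one extra slot with growth $\lambda_0$-like. The output should be
\[
\widehat{\deg}(f^n)\le C n^{2(s-1)}\lambda_1^n
\]
when $[0,s]$ is maximal affine. When $\lambda_1=1$, all $\lambda_i=1$, so $s=d$; the extra arithmetic slot raises the exponent by two, giving $n^{2d}$.

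\emph{Main obstacle.}
I expect Step 2 to be the hard part. One must verify that the arithmetic intersection numbers satisfy the Lorentzian and submultiplicativity inequalities with constants independent of $n$, even though the models of $\Gamma_n$ vary. My first attempt will be to pull everything back to a fixed model of $X\times X$ and use relative nefness plus the arithmetic Hodge index theorem to obtain the needed reverse Khovanskii--Teissier inequalities. I will also need to track how the affine-interval structure of $(\log\lambda_i)$ controls the polynomial exponent, exactly as in \cite{Hu-degree-growth}.

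Combining Steps 1 and 2 gives both parts of \cref{thm:ht-growth-upper-bound}. The general bound $n^{2(d-1)}$ follows from $s\le d$.
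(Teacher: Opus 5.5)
\textbf{The gap is in Step~1.} You treat it as standard, but it is exactly where uniformity in $n$ is won or lost. Your mechanism has three parts. First, Siu's inequality makes $t\,\pi_1^*\Hbar-\pi_2^*\Hbar$ big on $\widetilde\Gamma_n$ for $t\gtrsim\widehat{\deg}_{1,\Hbar}(f^n)$. Second, small sections bound heights off their zero locus. Third, Noetherian induction handles the base locus. This is the Kawaguchi--Silverman argument, whose gap (pointed out by Sano) is recalled in the introduction. The base locus is a subvariety of $\widetilde\Gamma_n$ that changes with $n$. On a component $Z$ of it, the restricted divisor is big only when $t$ exceeds a ratio of intersection numbers on $Z$, and that ratio is not controlled by $\widehat{\deg}_{1,\Hbar}(f^n)$. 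The number of induction steps and the subvarieties involved also depend on $n$. So your constant depends on $n$, and no polynomial bound survives. Note too that the $(\lambda_1+\epsilon)^n$ results only need a uniform \emph{multiplicative} constant and tolerate an additive error depending on the map. Here you need $h^+(g(P))\le C\,\widehat{\deg}_{1,\Hbar}(g)\,h^+(P)$ with no $g$-dependent term at all. Citing Xie does not supply this.

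\textbf{The missing idea is to cut out the point $P$ by sections pulled back from $X$, rather than analysing base loci on the graph.}
\begin{itemize}
\item Embed $X$ by $H^{\otimes r}$ with coordinates $u_0,\dots,u_N$ and write $P=[1:a_1:\cdots:a_N]$. The sections $\ell_j=u_j-a_ju_0$ cut out exactly $P$.
\item A counting argument (\cref{lem:subspace_cover}) gives $d$ combinations $s_i=\sum_j b_{ij}\ell_j$, with $b_{ij}$ in the fixed set $\{0,\dots,r^d(H^d)\}$, having $P$ as an isolated common zero.
\item These are small sections of $\Hbar^{\otimes r}+\pi^*\Ebar$. Here $\Ebar$ is a metrized trivial bundle on $\operatorname{Spec}K(P)$ with $\widehat{\deg}(\Ebar)/[K(P):\bQ]\le r\,h_{\Hbar}(P)+C_2$, where $C_2$ depends only on $(X,\Hbar)$.
\item Since $\pi_1$ is an isomorphism near $P$, the lift $P'$ is an isolated zero of the $\pi_1^*s_i$.
\item The non-proper intersection inequality from the Chambert-Loir--Thuillier induction formula (\cref{lem:nonproper_intersection}) then bounds $h_{\Hbar}(g(P))$ by $r^d\widehat{\deg}_{1,\Hbar}(g)/[K:\bQ]+dr^{d-1}\deg_{1,H}(g)\,(r\,h_{\Hbar}(P)+C_2)$.
\item Finally, $\widehat{\deg}_{1,\Hbar}\ge c\,\deg_{1,H}$ (\cref{lemma:degree-comparison}) gives the bound with no additive term.
\end{itemize}
The map enters only through intersection numbers on the graph, which is why the constant is uniform. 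Over function fields the same is done on the model, using sections of $\cH^{\otimes n_i}$ with $\prod n_i\lesssim\deg V_P$ that cut out the multisection $V_P$.

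\textbf{Step~2 is in the right spirit but underspecified.} Log-concavity and quasi-submultiplicativity alone do not drive the argument of \cite{Hu-degree-growth}. You need the Lorentzian package on the iterated graphs $Y_N$, together with adjacent comparability $C^{-1}\Hbar_{m,N}\preceq\Hbar_{m+1,N}\preceq C\,\Hbar_{m,N}$ with $C$ uniform in $N$ (from Yuan's inequality). A fixed model of $X\times X$ does not give this. Identifying $s$ also requires Song's formula $\widehat\lambda_k=\max\{\lambda_k,\lambda_{k-1}\}$. That formula duplicates the plateau value of the sequence; it does not add a ``$\lambda_0$-like'' slot.
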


The zero-entropy exponent $2d$ already occurs in the first author's joint work with Ingram on self-morphisms of abelian varieties \cite{HI}, via the quadratic nature of the N\'eron--Tate height $h_{\NT}$.
Indeed, we showed for abelian varieties that
\begin{equation}\label{eq:HI-ab-var-case}
  h_{\NT}(f^n(P))=
  \begin{cases}
    O\bigl(n^{2(d-1)}\lambda_1(f)^n\bigr),&\lambda_1(f)>1,\\
    O\bigl(n^{2d}\bigr),&\lambda_1(f)=1.
  \end{cases}
\end{equation}
So this fits with \cref{thm:ht-growth-upper-bound} perfectly in the case of abelian varieties.
See \cref{rmk:jump-explanation} for a conceptual explanation on the discrepancy of polynomial exponents.

The proof in \cref{setting:number-field} relies on arithmetic intersection theory and accounts for the vast majority of the paper.
The relative dynamical degree formula is particularly useful in studying various dynamical problems via fibrations \cite{DN11,DNT12,Truong20}.
Inspired by that, Dang, Ghioca, Lesieutre, Satriano, and the first author \cite{DGHLS22} initiated the study of height growth from points to higher-dimensional subvarieties.
Using arithmetic intersection theory, they introduced the notion of higher arithmetic degrees to control the height growth of subvarieties under iterations.
The second author \cite{Song26} further developed this theory and proved a basic result that the limits defining the higher arithmetic dynamical degrees $\widehat{\lambda}_k(f)$ exist for all $k$.
He also established the conjectural formula relating the arithmetic dynamical degrees $\widehat{\lambda}_k(f)$ to the ordinary dynamical degrees $\lambda_k(f)$; see \cite[Theorem~1.5]{Song26}.
Recall that in Dang's proof \cite{Dang20} of the existence of dynamical degrees, he established a higher-codimensional Siu's inequality to get quasi-submultiplicativity of degree sequences.
Similarly, in the arithmetic setting, Yuan \cite{Yuan08} established an arithmetic Siu's inequality and the second author further developed it to a higher-codimensional generalization.

Using Yuan--Zhang's arithmetic Hodge index theorem \cite[Theorem~3.2]{YZ17} (see also \cref{thm:YZ-arithmetic-HIT}), we give an arithmetic realization of an ordered Hodge--Lorentz package and prove in \cref{thm:arithmetic-temporal-estimate} a uniform temporal estimate on arithmetic degrees.
By the same argument in the proof of \cite[Theorem~1.1]{Hu-degree-growth}, we then obtain the following uniform growth estimate of arithmetic degrees in \cref{thm:arithmetic-degree-growth-upper-bound}, as an arithmetic analog of \cite[Theorem~1.1]{Hu-degree-growth}.

Assume that \cref{setting:number-field} holds.
Let $\overline {H}$ be a nef and big adelic $\bQ$-line bundle on $X$ extending $H$. For each $n\geq1$, let $\Gamma_{n}$ be the closure of the graph of $f^n$ in $X\times_K X$ and let $\widetilde{\Gamma}_n$ be its normalization. The two projections induce surjective morphisms $p_n,q_n\colon\widetilde{\Gamma}_n\to X$
satisfying $f^n\circ p_n=q_n$ as rational maps. For $0\leq k\leq d+1$, define the \textit{$k$-th arithmetic degree} of $f^n$ with respect to $\overline{H}$ as
\[
\widehat{\deg}_{k,\overline{H}}(f^n)\coloneqq\big((p_n^*\overline{H})^{d+1-k}\cdot (q_n^*\overline{H})^k\big)_{\widetilde{\Gamma}_{n}}.
\]
The \textit{$k$-th arithmetic dynamical degree} of $f$ is defined as
\[
\widehat{\lambda}_k(f)\coloneqq\lim_{n\to \infty}\widehat{\deg}_{k,\overline{H}}(f^n)^{1/n}.
\]
Note that the limit defining the arithmetic dynamical degree $\widehat{\lambda}_k(f)$ exists and is independent of the choice of nef and big adelic line bundle $\overline{H}$; see \cref{cor:existence-lambda-hat} (cf.~\cite[Corollary~2.8]{Song26}).

By Yuan--Zhang's arithmetic Hodge index theorem \cite[Theorem~3.2]{YZ17}, Ikoma deduced an arithmetic Khovanskii--Teissier inequality \cite[Theorem~2.9(1)]{Ikoma15b}.
In particular, for any $1\leq k\leq d$, we have
\begin{equation}\label{eq:arith-deg-log-concave}
\widehat{\deg}_{k,\overline{H}}(f^n)^2 \geq \widehat{\deg}_{k-1,\overline{H}}(f^n)\widehat{\deg}_{k+1,\overline{H}}(f^n).
\end{equation}
As a consequence, the arithmetic dynamical degree sequence $(\widehat{\lambda}_i(f))_i$ is log-concave, i.e.,
\begin{equation}\label{eq:arith-dyn-deg-log-concave}
\widehat{\lambda}_k(f)^2 \geq \widehat{\lambda}_{k-1}(f)\widehat{\lambda}_{k+1}(f), \qquad 1\leq k\leq d.
\end{equation}

\begin{theorem}\label{thm:arithmetic-degree-growth-upper-bound}
With notation as above, for every maximal affine interval $[r,s]$ of $(\log \widehat{\lambda}_i(f))_{i=0}^{d+1}$, there is a constant $C_{f,\overline{H},r,s}>0$ such that for every $r\leq k\leq s$ and $n\geq1$,
\begin{equation}\label{eq:arithmetic-degree-growth-bound}
\frac{(\overline{H}^{d+1})}{2^{k(d+1-k)}}\widehat{\lambda}_k(f)^n \leq \widehat{\deg}_{k,\overline{H}}(f^n) \leq C_{f,\overline{H},r,s}\, n^{2(k-r)(s-k)}\, \widehat{\lambda}_k(f)^n.
\end{equation}
\end{theorem}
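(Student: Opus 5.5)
The plan splits into the lower bound and the upper bound. For both, write $a_k(n)\coloneqq\log\widehat{\deg}_{k,\overline{H}}(f^n)$ and $\ell_k\coloneqq\log\widehat{\lambda}_k(f)$, so that $(\ell_k)_{k=0}^{d+1}$ is concave by \eqref{eq:arith-dyn-deg-log-concave}.

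\textbf{Lower bound.} We will use the higher-codimensional arithmetic Siu inequality mentioned above (Yuan's inequality, extended by the second author). It gives quasi-submultiplicativity
\[
\widehat{\deg}_{k,\overline{H}}(f^{n+m})\leq c_k\,\widehat{\deg}_{k,\overline{H}}(f^{n})\,\widehat{\deg}_{k,\overline{H}}(f^{m})/(\overline{H}^{d+1}),
\]
with an explicit constant $c_k$ of the shape $2^{k(d+1-k)}$. This comes from comparing $q^*\overline{H}$ with multiples of $p^*\overline{H}$ on the graph, in each of the $k$ and $d+1-k$ slots. Set $b_n\coloneqq c_k\widehat{\deg}_{k}(f^n)/(\overline{H}^{d+1})$. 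Then $b_{n+m}\le b_nb_m$, so Fekete's lemma gives $\widehat{\lambda}_k^n\le b_n$ for every $n$. This is exactly the left inequality in \eqref{eq:arithmetic-degree-growth-bound}. The precise normalization should match \cref{cor:existence-lambda-hat}.

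\textbf{Upper bound.} We follow the scheme of \cite[Theorem~1.1]{Hu-degree-growth}. The key input is the uniform temporal estimate \cref{thm:arithmetic-temporal-estimate}. I expect it to say that for fixed $n$ the sequence $k\mapsto a_k(n)$ is concave, by \eqref{eq:arith-deg-log-concave}, and, more importantly, that there is a uniform ``time'' inequality. Its form would be a two-step Hodge--Lorentz comparison: for $m,n\ge1$ and each $k$,
\[
\widehat{\deg}_k(f^{m+n})\le C\sum_{j}\widehat{\deg}_j(f^m)\,\widehat{\deg}_{k-j}\text{-type terms in }f^n,
\]
or, in the ordered package form, a mixed-degree decomposition bounding the degree of a composite by products with log-concave coefficients. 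With that in hand, we argue by induction on the distance of $k$ from the endpoints of the affine interval. At $k=r$ and $k=s$ the interval $[r,s]$ is maximal, so $\ell$ strictly bends there. The main term then dominates and the bound is $C\widehat{\lambda}_k^n$ with no polynomial loss. This uses the gap $2\ell_r-\ell_{r-1}-\ell_{r+1}>0$ to sum a geometric series.

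For interior $k$, iterating the temporal estimate along dyadic or unit time steps adds contributions from the neighbouring indices $k\pm1$. Because $\ell$ is affine on $[r,s]$, these contributions are not geometrically damped. Each composition step instead contributes a polynomial factor. Solving the resulting recursion, with exponents $e_k$ satisfying $e_k\le e_{k-1}+e_{k+1}+2$-type relations and $e_r=e_s=0$, should yield $e_k=2(k-r)(s-k)$. This is the discrete solution of $-\Delta e=4$ with zero boundary values, which is exactly the quadratic profile in the statement.

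\textbf{Main obstacle.} The hardest step is the temporal estimate with constants independent of $n$: the arithmetic analogue of the Hodge--Lorentz package, applied uniformly across all graphs $\widetilde{\Gamma}_n$. The lack of positivity of arithmetic intersections of non-nef classes makes it hard to transfer the geometric argument directly. I would first try to realize the package via Yuan--Zhang's arithmetic Hodge index theorem on $\widetilde{\Gamma}_{m,n}$, the normalized graph of the pair $(f^m,f^n)$, with adelic bundles pulled back from $\overline{H}$. Since \cref{thm:arithmetic-temporal-estimate} is stated earlier, I would assume it and reduce the argument to the combinatorial induction above. The delicate point there is obtaining the exact exponent $2(k-r)(s-k)$ rather than something cruder, and it requires careful bookkeeping of the sums over intermediate times.
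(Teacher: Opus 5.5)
Your lower bound is correct and matches the paper. It is \cref{cor:existence-lambda-hat}, i.e.\ Fekete's lemma applied to \cref{thm:quasi-submultiplicity}. In the paper the uniform constant $2^{k(d+1-k)}$ comes from the arithmetic reverse Khovanskii--Teissier inequality \cref{thm:rKT}, not from a Siu-type comparison of $q^*\Hbar$ with multiples of $p^*\Hbar$; taken literally, such a comparison would give an $f$-dependent constant. Citing \cref{thm:quasi-submultiplicity} makes this harmless.

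The upper bound has a genuine gap, starting with the key input, which you misidentify. \cref{thm:arithmetic-temporal-estimate} is not a composition inequality bounding $\widehat{\deg}_{k,\Hbar}(f^{m+n})$ by sums of products over intermediate times. It bounds the \emph{temporal} second difference of $\log\widehat{\deg}_{k,\Hbar}(f^n)$ by the \emph{spatial} ratio $\sqrt{\widehat{\deg}_{k-1,\Hbar}(f^n)\,\widehat{\deg}_{k+1,\Hbar}(f^n)}/\widehat{\deg}_{k,\Hbar}(f^n)$ at the same time $n$. The mechanism you sketch, with each composition step contributing a factor and then solving $e_k\le e_{k-1}+e_{k+1}+2$, does not yield the stated exponent. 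A fixed multiplicative constant $C$ incurred per step costs $C^n$ over unit steps, or $n^{O(\log C)}$ over dyadic steps. The exponent would then depend on the constants, whereas $2(k-r)(s-k)$ does not. Your recursion is neither derived nor the correct relation; you have read off $-\Delta e=4$ from the answer.

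The missing argument runs as follows. Set $u_{k,n}\coloneqq\log\bigl(\widehat{\deg}_{k,\Hbar}(f^n)/\widehat{\lambda}_k(f)^n\bigr)$, so $u_{k,n}/n\to0$. For interior $r<j<s$, affineness of $(\log\widehat\lambda_i)$ removes the $\widehat\lambda$'s from the spatial ratio, and the temporal estimate becomes the closed system
\[
\bigl|2u_{j,n}-u_{j,n-1}-u_{j,n+1}\bigr|\leq C_0\exp\Bigl(-\tfrac12\bigl(2u_{j,n}-u_{j-1,n}-u_{j+1,n}\bigr)\Bigr).
\]
The boundary values are bounded, for two reasons:
\begin{itemize}
\item An endpoint in $[1,d]$ is a strict break index, so \cref{thm:strict-break-asymptotic} applies. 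This is where your geometric-series idea belongs, and it needs the correct form of the temporal estimate.
\item An endpoint equal to $0$ or $d+1$ gives the constant sequence $\log(\Hbar^{d+1})$. You do not address this case.
\end{itemize}
A discrete comparison lemma for this system, \cite[Lemma~4.3]{Hu-degree-growth}, then gives $u_{k,n}\leq C+2(k-r)(s-k)\log n$.

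The exponent is forced by balancing, not by bookkeeping. For $u_{k,n}\approx e_k\log n$, the left side is about $e_k/n^2$ and the right side is about $n^{-(2e_k-e_{k-1}-e_{k+1})/2}$. Hence $2e_k-e_{k-1}-e_{k+1}=4$ with $e_r=e_s=0$, and $C_0$ only affects the additive constant.
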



To get the general upper bound in \eqref{eq:arithmetic-degree-growth-bound} over an arbitrary maximal affine interval $[r,s]$, we will first establish it at the endpoints $k=r,s$.
Moreover, at a \textit{strict break} index $k$ of $(\log \widehat{\lambda}_i(f))_{i=0}^{d+1}$, i.e.,
\begin{equation}\label{eq:strict-break-index}
\widehat{\lambda}_k(f) > \widehat{s}_k(f)\coloneqq\sqrt{\widehat{\lambda}_{k-1}(f)\widehat{\lambda}_{k+1}(f)},
\end{equation}
we prove an exact asymptotic formula for $\widehat{\deg}_{k,\overline{H}}(f^n)$.
It is an arithmetic analog of similar formulas for geometric degrees $\deg_{i,H}(f^n)$ at strict breaks; see \cite[Main Theorem]{BFJ08}, \cite[Theorem~1]{DF21}, and \cite[Theorem~1.5]{Hu-degree-growth}.

\begin{theorem}\label{thm:strict-break-asymptotic}
With notation as above, let $k$ be a strict break index of $(\log \widehat{\lambda}_i(f))_{i=0}^{d+1}$.
Then there is a constant $C_{f,\overline{H},k}>0$ such that for every $\widehat{s}_k(f)<\eta<\widehat{\lambda}_k(f)$,
\begin{equation}\label{eq:strict-break-asymptotic}
\widehat{\deg}_{k,\overline{H}}(f^n) = C_{f,\overline{H},k}\, \widehat{\lambda}_k(f)^n + O(\eta^n).
\end{equation}
\end{theorem}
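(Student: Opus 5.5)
We plan to mimic the geometric argument for strict breaks (cf.~\cite[Theorem~1.5]{Hu-degree-growth} and \cite{DF21}), with the Hodge--Lorentz package supplied by Yuan--Zhang's arithmetic Hodge index theorem.

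\emph{Step 1: the approximate-multiplicativity defect.} We will consider the sequence $a_n\coloneqq\widehat{\deg}_{k,\overline{H}}(f^n)$. The arithmetic (higher-codimensional) Siu inequality gives quasi-submultiplicativity $a_{m+n}\le C a_m a_n$. At a strict break this should sharpen to a two-sided estimate
\[
\bigl|a_{m+n}-c\,a_m a_n\bigr|\le C'\bigl(\widehat{\lambda}_{k-1}(f)^m\widehat{\lambda}_{k+1}(f)^n+\widehat{\lambda}_{k+1}(f)^m\widehat{\lambda}_{k-1}(f)^n\bigr)\cdot(\text{polynomial in } m,n),
\]
with $c=1/(\overline{H}^{d+1})$. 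To obtain it, we compose the correspondences $\widetilde{\Gamma}_m$ and $\widetilde{\Gamma}_n$ through a common model and decompose the pullback class of $(q_m^*\overline{H})^k$ (pushed to $X$) as $a_m\,c\,\overline{H}^k$ plus a remainder that is \emph{primitive} with respect to $\overline{H}$. By the arithmetic Hodge index theorem, primitive classes have a definite-sign intersection form. A Cauchy--Schwarz inequality for this form then bounds the cross term
\[
\bigl(\text{primitive part of } (f^m)^*\overline{H}^k\bigr)\cdot\bigl(\text{primitive part of } (f^n)_*\overline{H}^{d+1-k}\bigr)
\]
by the square root of a product of self-intersections. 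Those self-intersections are controlled by $a_m^{(k-1)}a_m^{(k+1)}$-type mixed degrees, via the arithmetic Khovanskii--Teissier inequality \eqref{eq:arith-deg-log-concave}. This is exactly the ordered Hodge--Lorentz mechanism, realized arithmetically in \cref{thm:arithmetic-temporal-estimate}.

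\emph{Step 2: bounding the error terms.} We will use \cref{thm:arithmetic-degree-growth-upper-bound} for the indices $k\pm1$, which gives polynomial-times-exponential bounds $\widehat{\deg}_{k\pm1}(f^n)\le C n^{N}\widehat{\lambda}_{k\pm1}(f)^n$. Combined with Step 1, this yields
\[
\bigl|a_{m+n}-c\,a_ma_n\bigr|\le C''\,\mathrm{poly}(m,n)\,\widehat{s}_k(f)^{m+n}\,\rho^{|m-n|},
\]
where $\rho$ is the ratio coming from the geometric mean. For a balanced split $m\approx n$, the error is $\ll\widehat{s}_k(f)^{m+n}$ up to polynomial factors. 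Since $\widehat{s}_k(f)<\eta$, this polynomial loss is absorbed.

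\emph{Step 3: the asymptotic.} We set $b_n\coloneqq c\,a_n/\widehat{\lambda}_k(f)^n$. The lower bound in \cref{thm:arithmetic-degree-growth-upper-bound} gives $b_n$ bounded below, and the upper bound gives $b_n=O(n^{N})$. The defect estimate reads
\[
|b_{m+n}-b_mb_n|\le \mathrm{poly}\cdot(\widehat{s}_k(f)/\widehat{\lambda}_k(f))^{m+n}.
\]
A standard telescoping over $n\mapsto 2n$ and $n\mapsto n+1$ (as in \cite{BFJ08}) first shows $b_n$ is bounded, and then that $b_n$ converges to some limit $b_\infty>0$ with
\[
|b_n-b_\infty|=O\bigl((\eta/\widehat{\lambda}_k(f))^n\bigr).
\]
Multiplying back by $\widehat{\lambda}_k(f)^n/c$ gives \eqref{eq:strict-break-asymptotic} with $C_{f,\overline{H},k}=b_\infty/c$.

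\emph{Main obstacle.} The hardest part is Step 1. In the arithmetic setting, the primitive decomposition and the signature statement must be applied to adelic classes on varying birational models $\widetilde{\Gamma}_n$. The Hodge index theorem of Yuan--Zhang requires integrable adelic line bundles and only gives a semi-definite form. We would try first to work entirely with products of nef adelic bundles $p^*\overline{H},q^*\overline{H}$ on a common normalized graph of $(f^m,f^n)$. We would then apply \cref{thm:arithmetic-temporal-estimate} there directly, so that all the needed inequalities are among mixed arithmetic degrees and no abstract primitive decomposition of cycles is needed.
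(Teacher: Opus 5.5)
\textbf{Step~1 is false as stated, and the rest of the argument depends on it.} Suppose $|a_{m+n}-c\,a_ma_n|\ll \mathrm{poly}(m,n)\,\widehat{s}_k(f)^{m+n}$ held on balanced splits with the universal constant $c=1/(\Hbar^{d+1})$. Then your Step~3 gives $b_\infty=b_\infty^2$, hence $b_\infty=1$ and $C_{f,\Hbar,k}=(\Hbar^{d+1})$ for every $f$ and every strict break $k$. That already fails for $X=\bP^1_{\bQ}$, $f(x)=x^2$, and $\Hbar$ the Fubini--Study $\overline{\cO(1)}$:
\begin{itemize}
\item here $(\widehat{\lambda}_0,\widehat{\lambda}_1,\widehat{\lambda}_2)=(1,2,2)$, so $k=1$ is a strict break, and $(\Hbar^2)=1/2$;
\item write $\Hbar=\Hbar_{\mathrm{can}}+\cO(g)$ with $0\le g\le\frac12\log 2$, equality on $|x_0|=|x_1|$, where $f^*\Hbar_{\mathrm{can}}=2\Hbar_{\mathrm{can}}$ and $(\Hbar_{\mathrm{can}}^2)=0$;
\item then $\widehat{\deg}_{1,\Hbar}(f^n)=2^n(\Hbar\cdot\Hbar_{\mathrm{can}})+O(1)=2^n\int g\,c_1(\Hbar_{\mathrm{can}})+O(1)=2^{n-1}\log 2+O(1)$.
\end{itemize}
So the true constant is $\frac12\log 2\neq\frac12$.

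The failure is structural. The $\Hbar$-primitive parts of $(f^m)^*\Hbar^k$ and $(f^n)_*\Hbar^{d+1-k}$ are as large as the main terms, because the leading direction is not proportional to $\Hbar^k$. So their pairing is of order $\widehat{\lambda}_k(f)^{m+n}$, not $\widehat{s}_k(f)^{m+n}$. Moreover, for $k\ge 2$ your Cauchy--Schwarz step needs a Hodge--Riemann signature on codimension-$k$ arithmetic classes. Yuan--Zhang's theorem does not provide this: it only concerns a bilinear form on adelic line bundles with the other slots filled by nef line bundles.

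Step~3 has a further problem. The step $n\mapsto n+1$ uses the unbalanced split $(n,1)$. There your own error term is $\max\{\widehat{\lambda}_{k-1}(f),\widehat{\lambda}_{k+1}(f)\}^n$, which can exceed $\widehat{\lambda}_k(f)^n$.

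\textbf{Step~2 is circular, and also unnecessary.} The proof of \cref{thm:arithmetic-degree-growth-upper-bound} uses the present theorem to bound $u_{r,n}$ and $u_{s,n}$ at strict-break endpoints. The maximal affine intervals containing $k\pm1$ have $k$ as an endpoint.

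Your fallback in the last paragraph points the right way, but the temporal estimate should be used at \emph{consecutive} times, not at split times. Apply \cref{thm:arithmetic-temporal-estimate} at index $k$. Its right-hand side is $O(q^n)$ with $q=\eta/\widehat{\lambda}_k(f)<1$. This follows simply from $\widehat{\deg}_{j,\Hbar}(f^n)^{1/n}\to\widehat{\lambda}_j(f)$ for $j=k-1,k,k+1$ (\cref{cor:existence-lambda-hat}) together with $\widehat{s}_k(f)<\eta$; no polynomial bounds are needed.
\begin{itemize}
\item The second differences of $\log a_n$ are therefore $O(q^n)$.
\item Hence $\log(a_{n+1}/a_n)$ converges with error $O(q^n)$, and the limit is $\log\widehat{\lambda}_k(f)$ by Ces\`aro.
\item Summing once more gives $\log a_n-n\log\widehat{\lambda}_k(f)=\log C_{f,\Hbar,k}+O(q^n)$.
\end{itemize}
This is exactly the paper's proof, via \cite[Lemma~4.1]{Hu-degree-growth}. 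No approximate multiplicativity, primitive decomposition, or growth bound at $k\pm1$ is required.
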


In order to convert the upper bound in \eqref{eq:arithmetic-degree-growth-bound} for $\widehat{\deg}_{1,\overline{H}}(f^n)$ into a genuine upper bound for height growth in \cref{thm:ht-growth-upper-bound}, it remains to prove a uniform height estimate of the form
\[
h_H^+(f^n(P)) \lesssim \widehat{\deg}_{1,\Hbar}(f^n) h_H^+(P).
\]
This will be proved in \cref{thm:ht-degree-bound-N}.
Its function-field counterpart is \cref{thm:ht-growth-bound-F}.

\subsection*{Notations and Terminology}
By a \emph{variety}, we mean a separated, geometrically integral scheme of finite type over a base field.

For a number field $K$, denote by $M_{K,f}$ and $M_{K,\infty}$ the sets of finite and infinite places of $K$, respectively, and set $M_K\coloneqq M_{K,f}\cup M_{K,\infty}$.

For each place $p$ of $\bQ$ in $M_{\bQ}$, let $\overline{\bQ_p}$ denote an algebraic closure of $\bQ_p$; in particular, $\overline{\bQ_\infty}=\bC$.
For finite $p$, denote by $|\cdot|_p$ the unique extension to $\overline{\bQ_p}$ of the $p$-adic absolute value normalized by $|p|_p=1/p$; for $p=\infty$, let $|\cdot|_\infty$ be the usual complex absolute value.
For a place $v$ of $K$ above $p\in M_{\bQ}$, define
\[
|x|_v\coloneqq|\sigma(x)|_p^{[K_v:\bQ_p]},
\qquad x\in K_v,
\]
where $\sigma\colon K_v\hookrightarrow\overline{\bQ_p}$ is any $\bQ_p$-embedding.
We also write $\bQ_v$ for $\bQ_p$ when $v$ lies above $p$.
Note that at a complex place $v$, $|\cdot|_v$ is not actually an absolute value, but $|\cdot|_v^{1/2}$ is.

\subsection*{Acknowledgments}
The authors thank Professors Yohsuke Matsuzawa and Junyi Xie for their interest in this work and comments on the draft.
The first author gratefully acknowledges the hospitality of the Fields Institute during Summer 2026, when part of this work was carried out.


\section{Adelic line bundles and arithmetic dynamical degrees}\label{sec:arithmetic-prelim}

\subsection{Adelic line bundles}
We refer to \cite[\S A.5]{YZ26} for a review of the theory of adelic line bundles on projective varieties, introduced by Zhang in \cite{Zhang95-JAG}.

Let $K$ be a number field, and $X$ be a projective variety of dimension $d$ over $K$. Write $M_K$ for the set of places of $K$ and $X_v^{\mathrm{an}}$ for the analytification of $X_{K_v}$.
An \textit{adelic line bundle} on $X$ is a pair
$\overline{L}=(L,(\|\cdot\|_v)_{v\in M_K})$, where $L$ is a line bundle on $X$ and each $\|\cdot\|_v$ is a continuous metric on $L_{K_v}$ over $X_v^{\mathrm{an}}$ satisfying certain coherence condition; see \cite[\S A.5.2]{YZ26}.
We denote by $\widehat{\mathrm{Pic}}(X)$ the group of isometry classes of adelic line bundles on $X$ and set
\[
\widehat{\mathrm{Pic}}(X)_{\bQ}
\coloneqq
\widehat{\mathrm{Pic}}(X)\otimes_{\bZ}\bQ.
\]
Write $\widehat{\mathrm{Pic}}(X)_{\mathrm{nef}}$ and $\widehat{\mathrm{Pic}}(X)_{\mathrm{int}}$ for the sets of nef and integrable adelic line bundles, respectively.

Let $\overline{L}$ be an adelic line bundle on $X$. For a section
$s\in H^0(X,L)$ and a place $v\in M_K$, define
\[
\|s\|_{v,\,\sup}
\coloneqq
\sup_{x\in X_v^{\mathrm{an}}}\|s(x)\|_v.
\]
The set of \emph{small sections} is
\[
\widehat{H}^0(X,\overline{L})
\coloneqq
\left\{
s\in H^0(X,L):
\|s\|_{v,\,\sup}\leq 1,
\text{ for all } v\in M_K
\right\},
\]
and we denote $\widehat{h}^0(X,\overline{L}) \coloneqq\log\#\widehat{H}^0(X,\overline{L})$.
The \emph{volume} of $\overline{L}$ is defined by
\[
\widehat{\mathrm{vol}}(\overline{L})
\coloneqq
\lim_{m\to\infty}
\frac{(d+1)!}{m^{d+1}}\,
\widehat{h}^0(X,m\overline{L}).
\]
This definition extends to adelic $\bQ$-line bundles by homogeneity; see \cite[\S A.5.4 and \S 5.2]{YZ26}.

An adelic $\bQ$-line bundle $\overline{L}$ on $X$ is called
\emph{big}, if $\widehat{\mathrm{vol}}(\overline{L})>0$.
It is called \emph{pseudo-effective}, if $\overline{L}+\overline{M}$ is big for every big adelic $\bQ$-line bundle $\overline{M}$ on $X$.

Let $\overline{L}$ be an integrable adelic $\bQ$-line bundle on $X$.
For an algebraic point $P\in X(\overline{K})$, viewed as a morphism $\sigma_P\colon\operatorname{Spec}K(P)\to X$, define
\begin{equation}\label{eq:adelic-Weil-ht}
h_{\overline{L}}(P)
\coloneqq
\frac{\widehat{\deg}(\sigma_P^*\overline{L})}{[K(P):\bQ]},
\end{equation}
where $\sigma_P^*\overline{L}\in\widehat{\mathrm{Pic}}(K(P))_{\bQ}$ and
$\widehat{\deg}\colon\widehat{\mathrm{Pic}}(K(P))_{\bQ}\to\bR$
is the arithmetic degree map. The function $h_{\overline{L}}\colon X(\overline{K})\to\bR$ is a Weil height associated with $L$; see \cite[\S A.5.3]{YZ26}.

We will repeatedly use the following standard fact.
See \cite[\S 2.5.5, Lemma~5.2.13, Proposition~4.1.2, and Theorem~5.2.2(1)]{YZ26}.

\begin{lemma}\label{lemma:arithmetic-pullback-positivity}
Let $\varphi\colon Y\to Z$ be a surjective morphism of normal projective varieties.
Then pullback preserves pseudo-effective and nef adelic $\bQ$-line bundles.
If, moreover, $\varphi$ is generically finite, then it also preserves nef and big adelic $\bQ$-line bundles.
\end{lemma}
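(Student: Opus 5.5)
The lemma has three parts: pullback preserves pseudo-effectivity, nefness, and (for generically finite $\varphi$) nef-and-bigness. The plan is to reduce each to properties of small sections, heights, and intersection numbers, via the references cited from \cite{YZ26}.

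\textbf{Nefness.} Nefness of an adelic $\bQ$-line bundle is characterised as a limit of model metrics that are relatively nef at every place, together with nonnegative heights of points. Pulling back the approximating models along a model of $\varphi$ keeps them relatively nef, since pullback of a nef line bundle under a morphism is nef on every fibre curve. The height condition is also preserved, because
\[
h_{\varphi^*\Lbar}(P)=h_{\Lbar}(\varphi(P)),
\]
and this follows from functoriality of \eqref{eq:adelic-Weil-ht} under $\sigma_{\varphi(P)}=\varphi\circ\sigma_P$. So $\varphi^*\Lbar$ is nef.

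\textbf{Nef and big.} Take $\Lbar$ nef and big, so by the arithmetic Hilbert--Samuel formula $\widehat{\mathrm{vol}}(\Lbar)=(\Lbar^{d+1})>0$. Since $\varphi^*\Lbar$ is nef by the previous step, its volume also equals its top self-intersection. The projection formula for a generically finite morphism gives
\[
\bigl((\varphi^*\Lbar)^{\dim Y+1}\bigr)=\deg(\varphi)\,(\Lbar^{d+1})>0 ,
\]
using $\dim Y=\dim Z=d$. Hence $\varphi^*\Lbar$ is big.

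\textbf{Pseudo-effectivity.} Take $\Lbar$ pseudo-effective and a big $\Mbar$ on $Y$; we must show $\varphi^*\Lbar+\Mbar$ is big. Choose an ample adelic $\Hbar$ on $Z$. There are two cases.
\begin{enumerate}[label=\textnormal{(\alph*)}]
\item If $\varphi$ is generically finite, then $\varphi^*\Hbar$ is big by the previous step. Since $\Mbar$ is big, $\Mbar-\epsilon\varphi^*\Hbar$ is still big for small $\epsilon>0$, by continuity of the volume. Now $\Lbar+\epsilon\Hbar$ is big, so some multiple has a nonzero small section. Its pullback is a nonzero small section of the corresponding multiple of $\varphi^*(\Lbar+\epsilon\Hbar)$, because pullback does not increase sup norms. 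The sum of a bundle with a nonzero small section and a big one is big, by multiplying sections. Therefore
\[
\varphi^*\Lbar+\Mbar=\varphi^*(\Lbar+\epsilon\Hbar)+(\Mbar-\epsilon\varphi^*\Hbar)
\]
is big.
\item In general, $\varphi^*\Hbar$ is only nef, not big. Instead, write $\Mbar\geq\epsilon\,\varphi^*\Hbar$ up to an effective big remainder; this is possible because a big adelic bundle dominates any fixed nef bundle scaled small enough, by a Kodaira-lemma-type argument. Then argue exactly as in (a).
\end{enumerate}

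The main obstacle is this last step: the arithmetic Kodaira lemma, i.e.\ that $\Mbar-\epsilon\varphi^*\Hbar$ remains big (or pseudo-effective) for small $\epsilon$. This is precisely what the cited \cite[Lemma~5.2.13, Theorem~5.2.2(1)]{YZ26} supply, and I would invoke them rather than reprove the lemma.
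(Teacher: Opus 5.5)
Your proof is correct and follows essentially the route behind the references the paper cites in lieu of a proof: nefness from the behaviour of approximating models and of heights under pullback, nef-and-big from the Hilbert--Samuel formula plus the projection formula $\bigl((\varphi^*\Lbar)^{d+1}\bigr)=\deg(\varphi)\,(\Lbar^{d+1})>0$, and pseudo-effectivity by pulling back small sections. One adjustment: the split into (a) and (b) is unnecessary, because bigness of $\varphi^*\Hbar$ is never used in (a)---openness of bigness, i.e.\ continuity of $\widehat{\mathrm{vol}}$ (Moriwaki's theorem and its adelic extension), makes $\Mbar-\epsilon\Nbar$ big for all small $\epsilon>0$ and \emph{any} adelic $\Nbar$---so your ``main obstacle'' is already resolved, and it is this continuity statement, rather than the Hilbert--Samuel formula, that you should cite for that step.
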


The following arithmetic Hodge index theorem due to Yuan--Zhang \cite{YZ17} plays a fundamental role in our study of mixed arithmetic intersections and of arithmetic dynamical degrees.
Their proof is by induction on dimension, where the base case is the classical Hodge index theorem for arithmetic surfaces due to \cite{Faltings84,Hriljac85}.
Note that Moriwaki \cite{Moriwaki96} also proved an arithmetic Hodge index theorem in terms of Hermitian line bundles.

\begin{theorem}[{cf.~\cite[Theorem~3.2]{YZ17}}]\label{thm:YZ-arithmetic-HIT}
Let $Y$ be a normal projective variety of dimension $d$ over a number field $K$, and let $\Hbar_1,\dots,\Hbar_{d-1}$ be nef adelic $\bQ$-line bundles on $Y$ such that each $H_i$ is big.
Then for any integrable adelic $\bQ$-line bundle $\Mbar$ on $Y$ satisfying
\[
M\cdot H_1\cdots H_{d-1}=0,
\]
we have
\[
\bigl(\Mbar^2\cdot \Hbar_1\cdots \Hbar_{d-1}\bigr)_Y \leq 0.
\]
\end{theorem}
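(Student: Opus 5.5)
The statement is cited from Yuan--Zhang \cite[Theorem~3.2]{YZ17}, so in the paper we simply invoke it. If I had to reprove it, I would argue by induction on $d=\dim Y$, after two reductions.

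\textbf{Reduction to ample, model-metrized data.} First I would reduce to the case where each $\Hbar_i$ has ample underlying bundle and a model (or smooth semipositive) metric, and where $\Mbar$ is a model adelic line bundle. The mechanism is to perturb $\Hbar_i$ to $\Hbar_i+\epsilon\overline{A}$ with $\overline{A}$ ample. The orthogonality condition is then restored by replacing $\Mbar$ with $\Mbar-t_\epsilon\overline{A}$, where
\[
t_\epsilon=\frac{M\cdot H_1^{\epsilon}\cdots H_{d-1}^{\epsilon}}{A\cdot H_1^{\epsilon}\cdots H_{d-1}^{\epsilon}}\to0 .
\]
Both approximations are harmless: intersection numbers of integrable adelic bundles are continuous under uniform limits of model metrics, and the inequality $\le0$ passes to the limit.

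\textbf{Base case $d=1$.} Here $Y$ is a curve, the hypothesis reads $\deg M=0$, and the claim is $\widehat{\deg}(\Mbar^2)\le0$. This is the Faltings--Hriljac Hodge index theorem. One writes $\Mbar=\Mbar_0+\overline{V}$, where $\Mbar_0$ is the admissible (fibrally orthogonal) part and $\overline{V}$ is vertical or purely metric. This gives
\[
\Mbar^2=-2\,\widehat{h}_{\NT}(M)+(\text{local terms}).
\]
Each local term is $\le0$ by negative semidefiniteness of the intersection matrix on reducible fibres, and by $\int\phi\, dd^c\phi\le0$ at archimedean places.

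\textbf{Inductive step.} Take $\Hbar_1$ ample. By arithmetic Bertini (Zhang/Moriwaki), choose for $m\gg0$ a strictly small section $s$ of $mH_1$ whose divisor $Z$ is integral. The arithmetic intersection formula gives
\[
m\bigl(\Mbar^2\cdot\Hbar_1\cdots\Hbar_{d-1}\bigr)_Y=\bigl(\Mbar|_Z^2\cdot\Hbar_2\cdots\Hbar_{d-1}\bigr)_Z-\sum_{v}\int_{Y_v^{\mathrm{an}}}\log\|s\|_v\,c_1(\Mbar)^2\wedge c_1(\Hbar_2)\cdots c_1(\Hbar_{d-1}).
\]
Since $M|_Z\cdot H_2\cdots H_{d-1}=m\,M\cdot H_1\cdots H_{d-1}=0$, the induction hypothesis (after normalizing $Z$) makes the first term $\le0$.

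\textbf{Main obstacle: the local integrals.} The form $c_1(\Mbar)^2$ is not positive, so these integrals have no a priori sign. I would first use the geometric Hodge index theorem on $Y_K$ to decompose $M$ into a part numerically trivial against $H_2,\dots,H_{d-1}$ on $Z$ and a multiple of $H_1$. I would then rewrite $c_1(\Mbar)=\omega+dd^c\phi$ and integrate by parts, so that the local contributions become expressions $\int\phi\,dd^c\phi\wedge(\text{positive})\le0$, or their non-archimedean analogues via the local Hodge index theorem on models. Making this sign argument work uniformly over all places is where I expect the real work to lie. This is essentially what Yuan--Zhang do, by first treating $\Mbar$ with $M$ numerically trivial and then handling the general case by a quadratic-form argument in the direction of $\Hbar_1$.
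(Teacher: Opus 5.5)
Citing \cite[Theorem~3.2]{YZ17} is exactly what the paper does. It gives no proof of its own, only the remark that Yuan--Zhang argue by induction on dimension starting from Faltings--Hriljac, which is the shape of your outline.

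As a reproof, however, your sketch has a genuine gap exactly where you locate the difficulty, and the remedies you suggest do not close it. The sign of
\[
-\sum_v\int\log\|s\|_v\,c_1(\Mbar)_v^2\wedge c_1(\Hbar_2)_v\wedge\cdots\wedge c_1(\Hbar_{d-1})_v
\]
is governed by the metrics of $\Mbar$, not by its numerical class. Already for $\Mbar=\cO(\phi)$, with $M=\cO_Y$, the measure is $(dd^c\phi)^2\wedge c_1(\Hbar_2)_v\wedge\cdots$. It has total mass $0$ and is in general nonzero, so it has no sign.

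Hence no decomposition of the class $M$ can help, and the one you propose is vacuous anyway. The hypothesis $M\cdot H_1\cdots H_{d-1}=0$ already says $M$ is orthogonal to $H_1$ for the form $(D,D')\mapsto D\cdot D'\cdot H_2\cdots H_{d-1}$, so its $H_1$-component is zero. Likewise, the reduction you attribute to Yuan--Zhang (first $M$ numerically trivial, then a quadratic-form argument in the direction of $\Hbar_1$) does not by itself reach all $M$ with $M\cdot H_1\cdots H_{d-1}=0$: $M+tH_1$ is numerically trivial only if $t=0$ and $M$ already is.

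Integrating by parts does not produce a sign either. Moving $dd^c$ onto $\log\|s\|_v$ brings in the Poincar\'e--Lelong current $\delta_Z-m\,c_1(\Hbar_1)_v$. The resulting terms $\int_{Z_v}\phi\,dd^c\phi\wedge\cdots$ and $m\int_{Y_v}\phi\,dd^c\phi\wedge c_1(\Hbar_1)_v\wedge\cdots$ are both $\leq 0$ but enter with opposite signs. The non-archimedean places are not addressed at all.

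The missing idea is to normalize the metrics of $\Mbar$ place by place \emph{before} cutting. First make your approximations, pass to a resolution of $Y$, and re-run your perturbation, so that the $\Hbar_i$ are relatively ample on a common model and K\"ahler at infinity. Then only finitely many places need attention. At each such $v$ choose $\phi_v$, a vertical divisor at a finite place or a smooth function at an archimedean place, and set $\Nbar\coloneqq\Mbar+\sum_v\cO(\phi_v)$ so that
\[
c_1(\Nbar)_v\wedge c_1(\Hbar_1)_v\wedge\cdots\wedge c_1(\Hbar_{d-1})_v=0 .
\]
This is solvable because the total mass is $M\cdot H_1\cdots H_{d-1}=0$. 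At finite places use Zariski's lemma for the special fibre. At archimedean places solve the self-adjoint elliptic equation $dd^c\phi\wedge\omega_1\wedge\cdots\wedge\omega_{d-1}=-c_1(\Mbar)_v\wedge\omega_1\wedge\cdots\wedge\omega_{d-1}$, where $\omega_i\coloneqq c_1(\Hbar_i)_v$.

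The normalization does not decrease the quantity you want to bound:
\[
\bigl(\Mbar^2\cdot\Hbar_1\cdots\Hbar_{d-1}\bigr)=\bigl(\Nbar^2\cdot\Hbar_1\cdots\Hbar_{d-1}\bigr)+\sum_v\int_{Y_v^{\mathrm{an}}}\phi_v\,dd^c\phi_v\wedge c_1(\Hbar_1)_v\wedge\cdots\wedge c_1(\Hbar_{d-1})_v .
\]
The cross terms vanish by the normalization, and the last sum is $\leq0$ by the local Hodge index theorem (Zariski's lemma at finite places).

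After normalization, $c_1(\Nbar)_v^2\wedge c_1(\Hbar_2)_v\wedge\cdots\wedge c_1(\Hbar_{d-1})_v\leq0$ as a measure:
\begin{itemize}
\item At a finite place, its mass at the divisorial point of a special-fibre component $E$ is a positive multiple of $(\mathcal{N}|_E)^2\cdot\mathcal{H}_2|_E\cdots\mathcal{H}_{d-1}|_E$. This is $\leq0$ by the geometric Hodge index theorem on $E$, since $\mathcal{N}|_E\cdot\mathcal{H}_1|_E\cdots\mathcal{H}_{d-1}|_E=0$.
\item At an archimedean place it is pointwise $\leq0$ by the mixed Hodge--Riemann relation for real $(1,1)$-forms (Alexandrov's inequality for mixed discriminants).
\end{itemize}
Since $\log\|s\|_v\leq0$, the error term is then $\leq0$. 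Your Bertini induction applied to $\Nbar$, which has the same underlying bundle $M$, then goes through. Without this step, or an equivalent device, the inductive step remains unproved.
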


\begin{corollary}\label{cor:positive_index}
Let $Y$ be a normal projective variety of dimension $d$ over a number field $K$. Let $\overline{H}_1,\ldots,\overline{H}_{d-1}$ be nef adelic $\bQ$-line bundles on $Y$. Let $V_\bQ\subset \widehat{\mathrm{Pic}}(Y)_{\mathrm{int},\bQ}$ be a finite-dimensional $\bQ$-vector space.
Denote $V\coloneqq V_\bQ \otimes_\bQ \bR$.
Then the symmetric bilinear form
\[
\mathscr{B}\colon V\times V\to \bR,
\qquad
(\overline{M}_1,\overline{M}_2)\longmapsto
\bigl(\overline{M}_1\cdot \overline{M}_2\cdot \overline{H}_1\cdots \overline{H}_{d-1}\bigr)_Y
\]
has positive index at most one.
\end{corollary}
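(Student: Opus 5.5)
We argue by contradiction and reduce to \cref{thm:YZ-arithmetic-HIT}. Since $\mathscr{B}$ is a real symmetric bilinear form on the finite-dimensional space $V$, having positive index at least two means there is a $2$-dimensional subspace $W\subset V$ on which $\mathscr{B}$ is positive definite. We will show that every such $W$ contains a nonzero vector $\overline{M}$ with $\mathscr{B}(\overline{M},\overline{M})\leq 0$, which is the desired contradiction.

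The first step is to reduce to the case where each $H_i$ is big and the data are rational. Fix an auxiliary nef adelic $\bQ$-line bundle $\overline{A}$ with $A$ ample, and for $t\in\bQ_{>0}$ set $\overline{H}_{i,t}\coloneqq\overline{H}_i+t\overline{A}$. These are nef with big generic fibres, so the Hodge index theorem applies to them. The corresponding form $\mathscr{B}_t$ on $V$ is polynomial in $t$ and converges entrywise to $\mathscr{B}$ as $t\to0$. Positive definiteness on the fixed plane $W$ is an open condition on the Gram matrix, so $\mathscr{B}_t$ is still positive definite on $W$ for small $t$. It therefore suffices to prove the claim for $\mathscr{B}_t$. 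For this we may assume each $H_i$ is big.

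The second step is to produce the vector $\overline{M}$. Consider the linear functional $\ell\colon V\to\bR$, $\ell(\overline{M})=M\cdot H_1\cdots H_{d-1}$, which is geometric intersection on the generic fibre and extends $\bR$-linearly. Its kernel meets the $2$-dimensional space $W$ nontrivially. We would like to apply \cref{thm:YZ-arithmetic-HIT} to some $\overline{M}\in W\cap\ker\ell$, but that theorem is stated for $\bQ$-line bundles, while $W\cap\ker\ell$ may contain no rational point.

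The main obstacle is this passage from $\bQ$ to $\bR$. We handle it by working on all of $\ker\ell$ rather than on $W$:
\begin{itemize}
\item Since $\ell$ takes rational values on the $\bQ$-structure $V_\bQ$, its kernel $\ker\ell$ is defined over $\bQ$. Hence $\ker\ell\cap V_\bQ$ is dense in $\ker\ell$.
\item \cref{thm:YZ-arithmetic-HIT} gives $\mathscr{B}(\overline{M},\overline{M})\leq0$ for every $\overline{M}\in\ker\ell\cap V_\bQ$.
\item The quadratic form $\overline{M}\mapsto\mathscr{B}(\overline{M},\overline{M})$ is continuous on the finite-dimensional space $\ker\ell$, so by density it is $\leq0$ on all of $\ker\ell$.
\end{itemize}
Taking a nonzero $\overline{M}\in W\cap\ker\ell$ now gives $\mathscr{B}(\overline{M},\overline{M})\leq0$, which contradicts positive definiteness of $\mathscr{B}$ on $W$.

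For the first step, one must also check that $\overline{A}$ can be chosen so that all the intersections involved are defined. Mixed intersection numbers of integrable adelic $\bQ$-line bundles are multilinear, so $\mathscr{B}_t$ is polynomial in $t$ as claimed and the perturbation argument goes through. Here $V$ can be taken inside $\widehat{\mathrm{Pic}}(Y)_{\mathrm{int},\bQ}$ as given, and $\overline{A}$ is any nef adelic line bundle with $A$ ample, which exists on a projective variety.
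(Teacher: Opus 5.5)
Your proof is correct and follows essentially the same route as the paper. You perturb the $\overline{H}_i$ by a small rational multiple of a nef adelic line bundle (ample generic fibre in your version, big in the paper's), apply Yuan--Zhang's Hodge index theorem on the rational points of the kernel of the generic-fibre functional, and pass to the real kernel by density and continuity; your two-plane contradiction argument just makes explicit the final limiting step, which the paper states in one line.
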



\begin{proof}
Let $\overline{N}$ be a nef adelic line bundle on $Y$ with $N$ big. For each $\epsilon\in\bQ_{>0}$, consider the linear map
\[
\theta_\epsilon\colon V\longrightarrow \bR,
\qquad
\overline{M}\longmapsto
M\cdot (H_1+\epsilon N)\cdots (H_{d-1}+\epsilon N).
\]
The subspace $W_\epsilon\coloneqq\ker\theta_\epsilon$ has codimension at most one in $V$. Define the symmetric bilinear form
\[
\mathscr{B}_\epsilon(\overline{M}_1,\overline{M}_2)
\coloneqq
\bigl(\overline{M}_1\cdot\overline{M}_2
\cdot(\Hbar_1+\epsilon\overline{N})\cdots
(\Hbar_{d-1}+\epsilon\overline{N})\bigr)_Y.
\]
Since each $H_i+\epsilon N$ is big, we obtain
$\mathscr{B}_\epsilon(\overline{M},\overline{M})\leq 0$
for every $\overline{M}\in W_\epsilon\cap V_\bQ$ by \cref{thm:YZ-arithmetic-HIT}.
Note that $W_\epsilon\cap V_\bQ$ is dense in $W_\epsilon$.
So, $\mathscr{B}_\epsilon(\overline{M},\overline{M})\leq 0$ for every $\overline{M}\in W_\epsilon$.
Thus, $\mathscr{B}_\epsilon$ has positive index at most one. As $\epsilon\to 0^+$, the forms $\mathscr{B}_\epsilon$ converge to $\mathscr{B}$, so $\mathscr{B}$ also has positive index at most one.
\end{proof}

\subsection{An arithmetic reverse Khovanskii--Teissier inequality for adelic line bundles}

We begin with an elementary bilinear reverse Khovanskii--Teissier inequality.
See \cite{HX24} for a systematic study of rKT property for Lorentzian polynomials.

\begin{lemma}\label{lemma:three-variable-rKT}
Let $V$ be a finite-dimensional $\bR$-vector space and $\mathscr{B}:V\times V\to \bR$ be a symmetric bilinear form with positive index at most one. Then for every $v_1,v_2,v_3\in V$ with $\mathscr{B}(v_i,v_j)\geq 0$ for $1\leq i,j\leq 3$, we have
\[  \sB(v_1,v_1)\sB(v_2,v_3) \leq 2 \sB(v_1, v_2)\sB(v_1, v_3).\]
\end{lemma}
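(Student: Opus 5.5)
The plan is to reduce to Cauchy--Schwarz on the orthogonal complement of $v_1$, where $\sB$ is negative semidefinite because its positive index is at most one. Set $a\coloneqq\sB(v_1,v_1)$, $b\coloneqq\sB(v_1,v_2)$ and $c\coloneqq\sB(v_1,v_3)$; all three are $\geq0$ by hypothesis.

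\emph{Degenerate case.} If $a=0$, the left-hand side vanishes. The right-hand side $2bc$ is nonnegative, so the inequality holds trivially. Hence we may assume $a>0$.

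\emph{Decomposition.} Since $\sB(v_1,v_1)>0$ and $\sB$ has positive index at most one, $\sB$ is negative semidefinite on $v_1^{\perp}\coloneqq\{w\in V:\sB(v_1,w)=0\}$. Indeed, if some $w\in v_1^\perp$ had $\sB(w,w)>0$, then $\sB$ would be positive definite on $\mathrm{span}(v_1,w)$, giving positive index at least two. We write
\[
v_2=\tfrac{b}{a}v_1+w_2,\qquad v_3=\tfrac{c}{a}v_1+w_3,\qquad w_2,w_3\in v_1^{\perp}.
\]
Expanding bilinearly gives $\sB(v_2,v_3)=\tfrac{bc}{a}+\sB(w_2,w_3)$. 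The desired inequality $a\,\sB(v_2,v_3)\leq 2bc$ is therefore equivalent to
\[
a\,\sB(w_2,w_3)\leq bc.
\]

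\emph{Key estimate.} Because $-\sB$ is positive semidefinite on $v_1^\perp$, Cauchy--Schwarz gives
\[
\sB(w_2,w_3)^2\leq \sB(w_2,w_2)\,\sB(w_3,w_3).
\]
Next we bound each factor using the sign hypotheses $\sB(v_2,v_2)\geq0$ and $\sB(v_3,v_3)\geq0$. We have
\[
\sB(w_2,w_2)=\sB(v_2,v_2)-\tfrac{b^2}{a}\geq-\tfrac{b^2}{a},
\]
and $\sB(w_2,w_2)\leq0$, so $|\sB(w_2,w_2)|\leq b^2/a$. Likewise $|\sB(w_3,w_3)|\leq c^2/a$. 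Combining, $|\sB(w_2,w_3)|\leq bc/a$, which is exactly the reduced inequality $a\,\sB(w_2,w_3)\leq bc$, and the lemma follows.

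The only conceptual point is the negative semidefiniteness of $\sB$ on $v_1^\perp$, which is the step that uses the index hypothesis. The hypothesis $\sB(v_2,v_3)\geq0$ is not needed in this argument.
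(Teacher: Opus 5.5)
Your proof is correct and is essentially the paper's argument. Both handle $\sB(v_1,v_1)=0$ trivially, project $v_2,v_3$ onto $v_1^{\perp}$ (where $\sB$ is negative semidefinite), bound $-\sB(w_i,w_i)$ by $\sB(v_1,v_i)^2/\sB(v_1,v_1)$ using $\sB(v_i,v_i)\geq 0$, and conclude by Cauchy--Schwarz. You also correctly observe that the hypothesis $\sB(v_2,v_3)\geq 0$ is not used.
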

\begin{proof}
If $\sB(v_1,v_1)=0$, the result follows immediately. We may therefore assume that $\sB(v_1,v_1)>0$. Let $v_1^\perp\subset V$ denote the orthogonal complement of $v_1$ with respect to $\sB$. Since $\sB$ has positive index at most one, its restriction to $v_1^\perp$ is negative semidefinite. For $i=2,3$, take $u_i\coloneqq v_i-c_iv_1\in v_1^\perp$, where $c_i=\sB(v_i,v_1)/\sB(v_1,v_1)\geq0$. Then
\[
0\leq -\sB(u_i,u_i) = c_i^2\sB(v_1,v_1)-\sB(v_i,v_i)\leq c_i^2\sB(v_1,v_1).
\]
By the Cauchy--Schwarz inequality, we have
\[
\begin{aligned}
\sB(v_2,v_3) &= \sB(u_2,u_3)+c_2c_3\sB(v_1,v_1)\\
&\leq \sqrt{-\sB(u_2,u_2)}\sqrt{-\sB(u_3,u_3)}+c_2c_3\sB(v_1,v_1)\\
&\leq 2c_2c_3\sB(v_1,v_1).
\end{aligned}
\]
Multiplying both sides by $\sB(v_1,v_1)$ completes the proof.
\end{proof}

We derive the following arithmetic reverse Khovanskii--Teissier inequality for adelic line bundles with a non-optimal coefficient.

\begin{theorem}[(Non-optimal) arithmetic reverse Khovanskii--Teissier inequality]\label{thm:rKT}
Let $Y$ be a normal projective variety of dimension $d$ over a number field $K$ and $\overline{L},\overline{N}_1,\dots,\overline{N}_{d+1}$ be nef adelic $\bQ$-line bundles. Then for every $0\leq k\leq d+1$, 
\begin{equation}\label{eq:arithmetic-rKT}
\bigl(\overline{L}^{d+1}\bigr)\bigl(\overline{N}_1\cdot\overline{N}_2\cdots\overline{N}_{d+1}\bigr)\leq 2^{k(d+1-k)}\bigl(\overline{N}_1\cdots\overline{N}_k\cdot \overline{L}^{d+1-k}\bigr)\bigl(\overline{L}^{k}\cdot\overline{N}_{k+1}\cdots\overline{N}_{d+1}\bigr).
\end{equation}
\end{theorem}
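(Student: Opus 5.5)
Our plan is to deduce \eqref{eq:arithmetic-rKT} from \cref{lemma:three-variable-rKT}, applied to the bilinear form of \cref{cor:positive_index}, by exchanging one factor at a time. The exchange is organized as a double induction that produces exactly $k(d+1-k)$ factors of $2$. By symmetry and the trivial cases $k=0$ and $k=d+1$, we may assume $1\leq k\leq d$.

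\textbf{Basic exchange step.} Let $\overline{A}$ and $\overline{B}$ be nef and let $\overline{M}_1,\dots,\overline{M}_{d-1}$ be nef. Consider the form $\sB(\overline{M},\overline{M}')=(\overline{M}\cdot\overline{M}'\cdot\overline{M}_1\cdots\overline{M}_{d-1})$ on the finite-dimensional space spanned by $\overline{L},\overline{A},\overline{B}$. By \cref{cor:positive_index} it has positive index at most one. All pairwise values are $\geq0$, since arithmetic intersections of nef adelic line bundles are nonnegative. \Cref{lemma:three-variable-rKT} with $v_1=\overline{L}$, $v_2=\overline{A}$, $v_3=\overline{B}$ then gives
\[
(\overline{L}^2\cdot\overline{M}_\bullet)(\overline{A}\cdot\overline{B}\cdot\overline{M}_\bullet)\leq 2(\overline{L}\cdot\overline{A}\cdot\overline{M}_\bullet)(\overline{L}\cdot\overline{B}\cdot\overline{M}_\bullet).
\]
This is a log-concavity statement for the function $F(S)=$ (intersection with $\overline{L}$ placed in the slots indexed by $S$). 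Write $T(S)=(\overline{L}^{|S|}\cdot\overline{N}_{[d+1]\setminus S})$ for $S\subset[d+1]=\{1,\dots,d+1\}$. The step says that for $i,j\notin S$ with $i\neq j$,
\[
T(S\cup\{i,j\})\,T(S)\leq 2\,T(S\cup\{i\})\,T(S\cup\{j\}).
\]
This is exactly the three-variable inequality, with the common factors $\overline{M}_\bullet$ being $\overline{L}$ on $S$ and the $\overline{N}_\ell$ for $\ell\notin S\cup\{i,j\}$.

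\textbf{Iteration.} The target inequality reads $T([d+1])\,T(\emptyset)\leq 2^{k(d+1-k)}\,T(I)\,T(J)$, where $I=\{k+1,\dots,d+1\}$ corresponds to $\overline{N}_1\cdots\overline{N}_k\cdot\overline{L}^{d+1-k}$ and $J=\{1,\dots,k\}$. We prove the general claim
\[
T(S\cup P\cup Q)\,T(S)\leq 2^{|P||Q|}\,T(S\cup P)\,T(S\cup Q)
\]
for disjoint $S,P,Q$, by induction on $|P|+|Q|$.

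If $|P|=|Q|=1$, this is the basic step. If $|P|=0$ or $|Q|=0$, it is an equality. Otherwise, without loss of generality $|P|\geq2$; split $P=P'\sqcup\{p\}$. First apply the claim to $(S,P',Q)$, obtaining the factor $2^{|P'||Q|}$:
\[
T(S\cup P'\cup Q)\,T(S)\leq 2^{|P'||Q|}\,T(S\cup P')\,T(S\cup Q).
\]
Then apply it to $(S\cup P',\{p\},Q)$, obtaining the factor $2^{|Q|}$:
\[
T(S\cup P\cup Q)\,T(S\cup P')\leq 2^{|Q|}\,T(S\cup P)\,T(S\cup P'\cup Q).
\]
Multiplying the two inequalities and cancelling $T(S\cup P')\,T(S\cup P'\cup Q)$ yields the claim with exponent $|P'||Q|+|Q|=|P||Q|$. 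Taking $S=\emptyset$, $P=I$, $Q=J$ gives the exponent $k(d+1-k)$, as required.

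\textbf{Main obstacle.} The one delicate point is the cancellation, which requires the cancelled intersection numbers to be positive. To handle this, first prove everything with $\overline{N}_i$ and $\overline{L}$ replaced by $\overline{N}_i+\epsilon\overline{H}$ and $\overline{L}+\epsilon\overline{H}$, where $\overline{H}$ is nef with $(\overline{H}^{d+1})>0$ (for instance, ample with positive height). Then all mixed numbers are strictly positive, by multilinearity and nonnegativity of nef intersections. Finally let $\epsilon\to0$ using continuity of intersection numbers. One should also check that \cref{cor:positive_index} really applies with only $d-1$ nef factors and no bigness assumption; it does, as stated.
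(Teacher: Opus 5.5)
Your proposal is correct and follows essentially the same route as the paper: perturb by $\epsilon\overline{H}$ so that all mixed intersection numbers are positive, apply \cref{lemma:three-variable-rKT} to the forms from \cref{cor:positive_index} to obtain one-slot exchange inequalities, multiply $k(d+1-k)$ of them, and let $\epsilon\to0^+$. The only difference is bookkeeping: the paper writes the exchanges directly as the grid $a_{i,j}a_{i+1,j+1}\leq 2a_{i+1,j}a_{i,j+1}$, while your induction on $|P|+|Q|$ unrolls to exactly that grid (take $\epsilon\in\bQ_{>0}$ so the perturbed bundles stay in $\widehat{\mathrm{Pic}}(Y)_{\bQ}$).
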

\begin{proof}
For $k=0$ or $d+1$, the inequality is an equality. We may assume and fix $1\leq k\leq d$.
Let $\overline{H}$ be a nef and big adelic line bundle on $Y$.
Fix $\epsilon\in\bQ_{>0}$ and set $\overline{L}_\epsilon \coloneqq\overline{L}+\epsilon\overline{H}$, $\overline{N}_{i,\epsilon} \coloneqq\overline{N}_i+\epsilon\overline{H}$ for $1\leq i\leq d+1$.
For $0\leq i\leq k$ and $0\leq j\leq d+1-k$, define
\[
a_{i,j}\coloneqq
\bigl(\overline{N}_{1,\epsilon}\cdots\overline{N}_{i,\epsilon}
\cdot\overline{N}_{k+1,\epsilon}\cdots\overline{N}_{k+j,\epsilon}
\cdot\overline{L}_\epsilon^{d+1-i-j}\bigr).
\] 
By the choice of $\overline{H}$, we have $a_{i,j}\geq\epsilon^{d+1}(\overline{H}^{d+1})>0$ for all $i,j$. 

We claim that $a_{i,j}a_{i+1,j+1}\leq 2a_{i+1,j}a_{i,j+1}$ for all $0\leq i\leq k-1$, $0\leq j\leq d-k$. In fact, consider the $\bQ$-vector space $V_\bQ\subset \widehat{\mathrm{Pic}}(Y)_{\mathrm{int},\bQ}$ spanned by $\overline{L}_{\epsilon},\overline{N}_{1,\epsilon},\dots,\overline{N}_{d+1,\epsilon}$ and the symmetric bilinear form $\sB_{i,j}$ on $V\coloneqq V_\bQ \otimes_\bQ \bR$ defined by
\[
\sB_{i,j}(\overline{M}_1,\overline{M}_2) \coloneqq \bigl(\overline{M}_1\cdot\overline{M}_2\cdot\overline{N}_{1,\epsilon}\cdots\overline{N}_{i,\epsilon}\cdot \overline{N}_{k+1,\epsilon}\cdots\overline{N}_{k+j,\epsilon}\cdot\overline{L}_{\epsilon}^{d-i-j-1}\bigr).
\]
By \cref{cor:positive_index}, $\sB_{i,j}$ has positive index at most one. So, applying \cref{lemma:three-variable-rKT} to $\overline{L}_\epsilon$, $\overline{N}_{i+1,\epsilon}$, and $\overline{N}_{k+j+1,\epsilon}$ proves the claim.

Multiplying these $k(d+1-k)$ inequalities and canceling the common terms, we obtain
\[\frac{a_{0,0}a_{k,d+1-k}}{a_{k,0}a_{0,d+1-k}}=\prod_{0\leq i\leq k-1}\,\prod_{0\leq j\leq d-k}\frac{a_{i,j}a_{i+1,j+1}}{a_{i+1,j}a_{i,j+1}}\leq 2^{k(d+1-k)}.\]
It follows that
\[
\bigl(\overline{L}_{\epsilon}^{d+1}\bigr)\bigl(\overline{N}_{1,\epsilon}\cdots\overline{N}_{d+1,\epsilon}\bigr)\leq 2^{k(d+1-k)}\bigl(\overline{N}_{1,\epsilon}\cdots\overline{N}_{k,\epsilon}\cdot \overline{L}_{\epsilon}^{d+1-k}\bigr)\bigl(\overline{L}_{\epsilon}^{k}\cdot\overline{N}_{k+1,\epsilon}\cdots\overline{N}_{d+1,\epsilon}\bigr).
\]
Letting $\epsilon\to0^+$, we prove the desired inequality \eqref{eq:arithmetic-rKT}.
\end{proof}

\begin{remark}
It was Xiao \cite[Remark~3.1]{Xiao15} first observed such class of inequalities in K\"ahler geometry. For the terminology, see \cite[Remark~4.18]{LX16}.
The sharp three-class inequality has appeared in \cite[Proposition~1.15]{BFJ08} for surfaces, and is proved in \cite[Theorem~5.7]{LX17} for arbitrary compact K\"ahler manifold.
Dang \cite[Theorem~3.4.3]{Dang20} established its algebraic counterpart for nef divisors with a non-optimal coefficient; using this he gave a new proof of the existence of dynamical degrees in arbitrary characteristic.
In the algebraic situation, the sharp fully mixed inequality was proved by Jiang--Li \cite[Theorem~3.5]{JL23} using multipoint Okounkov bodies.
Although the coefficient $2^{k(d+1-k)}$ in \eqref{eq:arithmetic-rKT} is far from being optimal, the point is that it is uniform which suffices to prove the existence of arithmetic dynamical degrees.
\end{remark}

\subsection{Arithmetic dynamical degrees}
Let $X$ be a normal projective variety of dimension $d$ over a number field $K$ and $f\colon X\dashrightarrow X$ be a dominant rational self-map. For each $n\geq1$, let $\Gamma_{n}$ be the graph of $f^n$ in $X\times_K X$ and let $\widetilde{\Gamma}_n$ be its normalization. The two projections induce surjective morphisms $p_n,q_n\colon\widetilde{\Gamma}_n\to X$
satisfying $f^n\circ p_n=q_n$ as rational maps.

\begin{definition}
Fix a nef and big adelic $\bQ$-line bundle $\overline{H}$ on $X$.
For $0\leq k\leq d+1$ and $n\geq1$, define the \textit{$k$-th arithmetic degree} of $f^n$ with respect to $\overline{H}$ as
\[
\widehat{\deg}_{k,\overline{H}}(f^n)\coloneqq\big((p_n^*\overline{H})^{d+1-k}\cdot (q_n^*\overline{H})^k\big)_{\widetilde{\Gamma}_{n}}.
\]
By convention, set $\widehat{\deg}_{k,\overline{H}}(f^0)\coloneqq (\Hbar^{d+1})$.
The \textit{$k$-th arithmetic dynamical degree} of $f$ with respect to $\overline{H}$ is defined as
\[\widehat{\lambda}_k(f)\coloneqq\lim_{n\to \infty}\widehat{\deg}_{k,\overline{H}}(f^n)^{1/n}.\]
\end{definition}

\begin{remark}
Let $K'/K$ be a finite extension, and let $\overline{H}_1,\dots,\overline{H}_{d+1}$ be integrable adelic $\bQ$-line bundles on $X$. Set $X_{K'}\coloneqq X\times_K K'$,
and let $\overline{H}_{i,K'}$ denote the pullback of $\overline{H}_i$ to $X_{K'}$. By the projection formula \cite[Proposition~4.1.2]{YZ26},
\[
\bigl(\overline{H}_{1,K'}\cdots\overline{H}_{d+1,K'}\bigr)_{X_{K'}}= [K':K]\bigl(\overline{H}_1\cdots\overline{H}_{d+1}\bigr)_X.
\]
In particular, for the base change $f_{K'}$ of $f$, we have
\[
\widehat{\deg}_{k,\overline{H}_{K'}}(f_{K'})= [K':K]\,\widehat{\deg}_{k,\overline{H}}(f).
\]
In contrast, geometric intersection numbers are invariant under extension of the base field, so $\deg_{k,H_{K'}}(f_{K'})=\deg_{k,H}(f)$.
\end{remark}

When $\overline{H}$ is induced by a Hermitian line bundle on an arithmetic model $\cX$ of $X$ over $\cO_K$, the limit defining $\widehat{\lambda}_k(f)$ exists by \cite[Corollary~2.8]{Song26}.
For completeness, we show below that this limit also exists for any nef and big adelic line bundle $\overline{H}$ and does not depend on the choice of $\overline{H}$. Here, we use the newly developed (non-optimal) arithmetic reverse Khovanskii--Teissier inequality to prove the existence of $\widehat{\lambda}_k(f)$.
This approach differs from that in \cite{Song26}.
Indeed, \cite[Theorem~2.5]{Song26} relies on the so-called arithmetic Demailly approximation theorem, but yields a stronger coefficient than our \cref{thm:rKT}.

\begin{theorem}[Quasi-submultiplicativity of arithmetic degrees]\label{thm:quasi-submultiplicity}
Let $X$ be a normal projective variety of dimension $d$ over a number field $K$ and $f,g\colon X\dashrightarrow X$ be dominant rational self-maps. Let $\overline{H}$ be a nef and big adelic $\bQ$-line bundle. Then for every $0\leq k\leq d+1$, 
\[
\widehat{\deg}_{k,\overline{H}}(f\circ g)\leq \frac{2^{k(d+1-k)}}{\bigl(\overline{H}^{d+1}\bigr)}\,\widehat{\deg}_{k,\overline{H}}(f)\,\widehat{\deg}_{k,\overline{H}}(g).
\]
\end{theorem}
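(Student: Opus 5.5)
We would follow Dang's geometric strategy, with the arithmetic reverse Khovanskii--Teissier inequality \cref{thm:rKT} in place of the higher-codimensional Siu inequality.

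\textbf{Setup.} Let $\Gamma$ be the normalization of the closure of the triple graph
\[
\{(x,g(x),f(g(x)))\}\subset X\times X\times X.
\]
Its three projections $\pi_1,\pi_2,\pi_3\colon\Gamma\to X$ are surjective, generically finite of degree one onto the relevant graphs, and satisfy $g\circ\pi_1=\pi_2$ and $f\circ\pi_2=\pi_3$. The map $(\pi_1,\pi_3)$ factors birationally through $\widetilde\Gamma_{f\circ g}$, and $(\pi_1,\pi_2)$, $(\pi_2,\pi_3)$ factor through $\widetilde\Gamma_g$, $\widetilde\Gamma_f$. All three morphisms are birational since the graphs are birational to $X$. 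By the projection formula \cite[Proposition~4.1.2]{YZ26},
\[
\widehat{\deg}_{k,\Hbar}(f\circ g)=\bigl((\pi_1^*\Hbar)^{d+1-k}(\pi_3^*\Hbar)^k\bigr)_\Gamma,
\]
and analogously
\[
\widehat{\deg}_{k,\Hbar}(g)=\bigl((\pi_1^*\Hbar)^{d+1-k}(\pi_2^*\Hbar)^k\bigr)_\Gamma,\qquad
\widehat{\deg}_{k,\Hbar}(f)=\bigl((\pi_2^*\Hbar)^{d+1-k}(\pi_3^*\Hbar)^k\bigr)_\Gamma.
\]
The normalizations are needed so that $\Gamma$ is normal and \cref{thm:rKT} applies. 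By \cref{lemma:arithmetic-pullback-positivity}, each $\pi_i^*\Hbar$ is nef (and big).

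\textbf{Main step.} Apply \cref{thm:rKT} on $\Gamma$, of dimension $d$, with the following choices:
\[
\Lbar=\pi_2^*\Hbar,\qquad \Nbar_1=\dots=\Nbar_k=\pi_3^*\Hbar,\qquad \Nbar_{k+1}=\dots=\Nbar_{d+1}=\pi_1^*\Hbar.
\]
This gives
\[
\bigl((\pi_2^*\Hbar)^{d+1}\bigr)\bigl((\pi_3^*\Hbar)^k(\pi_1^*\Hbar)^{d+1-k}\bigr)\le 2^{k(d+1-k)}\bigl((\pi_3^*\Hbar)^k(\pi_2^*\Hbar)^{d+1-k}\bigr)\bigl((\pi_2^*\Hbar)^k(\pi_1^*\Hbar)^{d+1-k}\bigr).
\]
The right-hand side is $2^{k(d+1-k)}\widehat{\deg}_{k,\Hbar}(f)\,\widehat{\deg}_{k,\Hbar}(g)$. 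On the left, $\bigl((\pi_2^*\Hbar)^{d+1}\bigr)=(\Hbar^{d+1})>0$ since $\pi_2$ is birational. Dividing by $(\Hbar^{d+1})$ yields the claim.

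\textbf{Main obstacle.} The step needing care is justifying the birational identifications and the projection formula for adelic intersections under birational morphisms of normal varieties. Concretely, one needs $\bigl((\varphi^*\Lbar_1)\cdots(\varphi^*\Lbar_{d+1})\bigr)=\deg(\varphi)\,(\Lbar_1\cdots\Lbar_{d+1})$ for generically finite surjective $\varphi$, which is exactly the cited projection formula.

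A second point is that $\Gamma$ must be geometrically integral, i.e.\ a variety. This holds because it is birational to $X$.
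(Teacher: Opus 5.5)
Your strategy is the same as the paper's: you use the same triple graph $\Gamma$ (the paper's $Y$) and the same assignment $\Lbar=\pi_2^*\Hbar$, $\Nbar_1=\dots=\Nbar_k=\pi_3^*\Hbar$, $\Nbar_{k+1}=\dots=\Nbar_{d+1}=\pi_1^*\Hbar$ in \cref{thm:rKT}. However, two of your identifications are false unless $g$ is birational.

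First, the middle projection $\pi_2=g\circ\pi_1$ is generically finite of degree $\deg(g)$, the topological degree of $g$; it is not birational. Hence $\bigl((\pi_2^*\Hbar)^{d+1}\bigr)_\Gamma=\deg(g)\,(\Hbar^{d+1})$. Second, the morphism $(\pi_2,\pi_3)\colon\Gamma\to\widetilde\Gamma_f$ also has degree $\deg(g)$, because composing it with the birational first projection $\widetilde\Gamma_f\to X$ gives $\pi_2$. Hence $\bigl((\pi_2^*\Hbar)^{d+1-k}\cdot(\pi_3^*\Hbar)^k\bigr)_\Gamma=\deg(g)\,\widehat{\deg}_{k,\Hbar}(f)$.

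Your justification, ``the graphs are birational to $X$,'' does not prove this. Two varieties that are both birational to $X$ can be joined by a morphism of any degree. Only $(\pi_1,\pi_2)$ and $(\pi_1,\pi_3)$ are birational onto $\widetilde\Gamma_g$ and $\widetilde\Gamma_{f\circ g}$, since their compositions with the first projections equal $\pi_1$. So your identities for $\widehat{\deg}_{k,\Hbar}(g)$ and $\widehat{\deg}_{k,\Hbar}(f\circ g)$ are correct.

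The fix is immediate, and it is exactly what the paper records in \eqref{eq:intersection-degree-identities}. With the correct values, the rKT inequality reads
\[
\deg(g)\,(\Hbar^{d+1})\,\widehat{\deg}_{k,\Hbar}(f\circ g)\le 2^{k(d+1-k)}\deg(g)\,\widehat{\deg}_{k,\Hbar}(f)\,\widehat{\deg}_{k,\Hbar}(g),
\]
and the factor $\deg(g)>0$ cancels. Your final inequality is therefore correct, but as written you reach it through two wrong identities whose errors happen to cancel. You should carry the factor $\deg(g)$ explicitly, using the projection formula for generically finite morphisms that you already cite.
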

\begin{proof}
The case $k=0$ follows directly from the definition of the arithmetic degree, so fix $1\leq k\leq d+1$.
Consider the graph closure in $X^3$ of the following rational map
\[
(g,f\circ g)\colon X\ratmap X^2, \qquad x\longmapsto (g(x),f(g(x))).
\]
Let $Y$ be its normalization with induced surjective morphisms $\pi_0,\pi_1,\pi_2\colon Y\to X$ satisfying that $\pi_1=g\circ\pi_0$ and $\pi_2=f\circ\pi_1$.
Denote $\Hbar_i\coloneqq \pi_i^*\Hbar$ for $i=0,1,2$, which are still nef and big.
Then the projection formula \cite[Proposition~4.1.2]{YZ26} yields that
\begin{equation}\label{eq:intersection-degree-identities}
\begin{aligned}
\bigl(\Hbar_1^{d+1}\bigr)_Y &= \deg(g)\bigl(\Hbar^{d+1}\bigr), \qquad & \bigl(\Hbar_0^{d+1-k}\cdot\Hbar_2^{k}\bigr)_Y &= \widehat{\deg}_{k,\Hbar}(f\circ g),\\
\bigl(\Hbar_1^{d+1-k}\cdot\Hbar_2^{k}\bigr)_Y &= \deg(g)\,\widehat{\deg}_{k,\Hbar}(f),\qquad &\bigl(\Hbar_0^{d+1-k}\cdot\Hbar_1^{k}\bigr)_Y &= \widehat{\deg}_{k,\Hbar}(g).
\end{aligned}
\end{equation}
By applying \cref{thm:rKT} with $\Lbar=\Hbar_1$, $\Nbar_1=\dots=\Nbar_k=\Hbar_2$, and $\Nbar_{k+1}=\dots=\Nbar_{d+1}=\Hbar_0$, we get
\[
\bigl(\Hbar_1^{d+1}\bigr)_Y\bigl(\Hbar_0^{d+1-k}\cdot\Hbar_2^{k}\bigr)_Y
\leq 2^{k(d+1-k)} \bigl(\Hbar_1^{d+1-k}\cdot\Hbar_2^k\bigr)_Y\bigl(\Hbar_0^{d+1-k}\cdot\Hbar_1^{k}\bigr)_Y.
\]
By substituting the identities \eqref{eq:intersection-degree-identities} into the above inequality, we obtain
\begin{align*}
\widehat{\deg}_{k,\overline{H}}(f\circ g) &\leq \frac{2^{k(d+1-k)}}{\bigl(\overline{H}_1^{d+1}\bigr)_Y}\,\deg(g)\,\widehat{\deg}_{k,\overline{H}}(f)\,\widehat{\deg}_{k,\overline{H}}(g)\\
&= \frac{2^{k(d+1-k)}}{\bigl(\overline{H}^{d+1}\bigr)}\,\widehat{\deg}_{k,\overline{H}}(f)\,\widehat{\deg}_{k,\overline{H}}(g).\qedhere
\end{align*}
\end{proof}

\begin{corollary}\label{cor:existence-lambda-hat}
Let $X$ be a normal projective variety of dimension $d$ over a number field $K$, and let $f\colon X\dashrightarrow X$ be a dominant rational self-map. Then, for $1\leq k\leq d+1$, the limit
\[
\widehat{\lambda}_k(f)\coloneqq \lim_{n\to\infty}\widehat{\deg}_{k,\overline{H}}(f^n)^{1/n} = \inf_{n\geq1} \left(\frac{2^{k(d+1-k)}}{\bigl(\overline{H}^{d+1}\bigr)}\,\widehat{\deg}_{k,\overline{H}}(f^n)\right)^{1/n}
\]
exists and is independent of the choice of the nef and big adelic $\bQ$-line bundle $\overline{H}$ on $X$.
\end{corollary}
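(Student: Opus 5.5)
The plan is to apply Fekete's lemma to the normalized sequence
\[
a_n\coloneqq \log\Bigl(\tfrac{2^{k(d+1-k)}}{(\Hbar^{d+1})}\,\widehat{\deg}_{k,\Hbar}(f^n)\Bigr),
\]
and then to compare two choices of $\Hbar$ by a domination argument. Throughout, $\Hbar$ is nef and big, so $(\Hbar^{d+1})>0$ and each $\widehat{\deg}_{k,\Hbar}(f^n)>0$. Positivity holds because $p_n^*\Hbar$ and $q_n^*\Hbar$ are nef and big on $\widetilde\Gamma_n$ by \cref{lemma:arithmetic-pullback-positivity}, and a mixed intersection of nef and big adelic line bundles is positive. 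To see this last point, apply \cref{thm:rKT} with $\Lbar$ nef and big: it bounds $(\Lbar^{d+1})(\Nbar_1\cdots\Nbar_{d+1})$ from above. Positivity itself follows more directly from the arithmetic Khovanskii--Teissier inequality \eqref{eq:arith-deg-log-concave}, or by writing each factor as dominating a small multiple of a common nef and big bundle. In any case $a_n$ is a well-defined real number.

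\textbf{Step 1 (existence).} Apply \cref{thm:quasi-submultiplicity} with $f^m$ and $f^n$ in place of $f$ and $g$. Since $f^m\circ f^n=f^{m+n}$, this gives $a_{m+n}\le a_m+a_n$. Fekete's lemma then yields $\lim a_n/n=\inf_n a_n/n$, where the limit lies in $[-\infty,\infty)$. The constant prefactor disappears after taking $n$-th roots, so $\widehat{\deg}_{k,\Hbar}(f^n)^{1/n}$ converges to the stated infimum.

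The limit is not $0$. By the lower bound in \cref{thm:arithmetic-degree-growth-upper-bound} — or more elementarily, by $\widehat{\deg}_{k,\Hbar}(f^n)\ge$ something like $(\Hbar^{d+1})\cdot\deg_{k}$-type lower bounds obtained from \cref{thm:rKT} — it is at least $1$. Strictly speaking, positivity of each term already suffices for Fekete's lemma, and a limit of $0$ would still be a legitimate value. So this point is not essential.

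\textbf{Step 2 (independence of $\Hbar$).} Let $\Hbar,\Hbar'$ be nef and big. The key step is a domination: find $c>0$ such that $c\Hbar'-\Hbar$ is pseudo-effective, or more conveniently such that $\Hbar\le c\Hbar'$ in the sense that the difference is effective/pseudo-effective.

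Once this holds, the monotonicity of arithmetic intersection numbers of nef bundles under pseudo-effective differences does the rest. That monotonicity comes from pulling back via \cref{lemma:arithmetic-pullback-positivity} and expanding multilinearly, each time replacing one factor. It gives
\[
\widehat{\deg}_{k,\Hbar}(f^n)\le c^{d+1}\,\widehat{\deg}_{k,\Hbar'}(f^n).
\]
Taking $n$-th roots and exchanging the roles of $\Hbar$ and $\Hbar'$ gives equality of the limits.

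\textbf{Main obstacle.} I expect the domination in Step 2 to be the main obstacle. Because $H'$ is big, it is natural to try $c$ large: then $cH'-H$ is big geometrically, and adding a large constant metric $\cO(\lambda)$ makes the adelic bundle effective. However, this modification changes $\Hbar'$ by a trivial bundle with constant metric, which contributes extra terms.

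To handle this, I would instead bound the intersection via \cref{thm:rKT} directly. Apply it on $\widetilde\Gamma_n$ with $\Lbar=p_n^*\Hbar'$ and the $\Nbar_i$ equal to pullbacks of $\Hbar$. Each splitting factor then reduces to intersections of the form $(p_n^*\Hbar)^{a}(p_n^*\Hbar')^{b}$, which do not depend on $n$ by the projection formula, multiplied by $\widehat{\deg}_{k,\Hbar'}(f^n)$-type terms. This reduction, iterated on both the $p$-side and the $q$-side, gives the comparison with constants independent of $n$. That suffices after taking $n$-th roots.
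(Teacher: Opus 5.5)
Your Step~1 is the paper's argument (quasi-submultiplicativity from \cref{thm:quasi-submultiplicity} plus Fekete). Your fallback for Step~2 applies \cref{thm:rKT} on the graph twice, once with $\Lbar$ pulled back along $p_n$ and once along $q_n$. This is precisely the paper's double use of the arithmetic rKT inequality, so the proposal is correct, with one correction: the $q$-side factors are not independent of $n$, but their $\deg(f)^n$ cancels against $\bigl((q_n^*\Hbar')^{d+1}\bigr)=\deg(f)^n\bigl((\Hbar')^{d+1}\bigr)$.

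The domination you flagged as the main obstacle is in fact immediate and gives an equally valid route without rKT. Yuan's inequality yields $\widehat{\mathrm{vol}}(t\Hbar'-\Hbar)\geq t^d\bigl(t((\Hbar')^{d+1})-(d+1)((\Hbar')^{d}\cdot\Hbar)\bigr)>0$ for $t\gg1$, exactly as in \cref{lemma:degree-comparison,lemma:arithmetic-adjacent-comparison}, so no modification of metrics by constants is needed. Pulling back by \cref{lemma:arithmetic-pullback-positivity} and swapping one factor at a time, using nonnegativity of nef-times-pseudo-effective intersections, gives $\widehat{\deg}_{k,\Hbar}(f^n)\leq t^{d+1}\,\widehat{\deg}_{k,\Hbar'}(f^n)$.
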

\begin{proof}
By \cref{thm:quasi-submultiplicity}, for all $n,m\geq 1$, we have
\[
\widehat{\deg}_{k,\overline{H}}(f^{n+m})\leq \frac{2^{k(d+1-k)}}{\bigl(\overline{H}^{d+1}\bigr)}\,\widehat{\deg}_{k,\overline{H}}(f^n)\,\widehat{\deg}_{k,\overline{H}}(f^m).
\]
Thus, the sequence $\Bigl(\frac{2^{k(d+1-k)}}{(\overline{H}^{d+1})}\,\widehat{\deg}_{k,\overline{H}}(f^n)\Bigr)_{n\geq1}$ is submultiplicative. The existence of the limit then follows from Fekete's lemma.

Let $\overline{L}$ be another nef and big adelic $\bQ$-line bundle on $X$.
Let $\Gamma\subset X\times_K X$ be the closure of the graph of $f$, and let $\widetilde{\Gamma}$ be its normalization. Denote the two projections by
$\pi_1,\pi_2\colon\widetilde{\Gamma}\to X$. Applying \cref{thm:rKT} twice and using the projection formula \cite[Proposition~4.1.2]{YZ26}, we obtain
\begin{align*}
\deg(\pi_2)\bigl(\Lbar^{d+1}\bigr)^2\,\widehat{\deg}_{k,\Hbar}(f) &= \bigl(\pi_2^*\Lbar^{d+1}\bigr)\bigl(\pi_1^*\Lbar^{d+1}\bigr)\,\widehat{\deg}_{k,\Hbar}(f)\\
&= \bigl(\pi_2^*\Lbar^{d+1}\bigr)\bigl(\pi_1^*\Lbar^{d+1}\bigr)\big(\pi_1^*\Hbar^{d+1-k}\cdot\pi_2^*\Hbar^k\big)\\
&\leq 2^{k(d+1-k)} \bigl(\pi_2^*\Lbar^{d+1}\bigr)\big(\pi_1^*\Lbar^{d+1-k}\cdot\pi_2^*\Hbar^{k}\big)\bigl(\pi_1^*\Lbar^{k}\cdot\pi_1^*\Hbar^{d+1-k}\bigr)\\
&= 2^{k(d+1-k)} \bigl(\Lbar^{k}\cdot\Hbar^{d+1-k}\bigr)\bigl(\pi_2^*\Lbar^{d+1}\bigr)\big(\pi_1^*\Lbar^{d+1-k}\cdot\pi_2^*\Hbar^{k}\big)\\
&\leq 2^{2k(d+1-k)} \bigl(\Lbar^{k}\cdot\Hbar^{d+1-k}\bigr)\bigl(\pi_2^*\Lbar^{d+1-k}\cdot\pi_2^*\Hbar^{k}\bigr)\big(\pi_1^*\Lbar^{d+1-k}\cdot\pi_2^*\Lbar^{k}\big)\\
&= 2^{2k(d+1-k)} \deg(\pi_2) \bigl(\Lbar^{k}\cdot\Hbar^{d+1-k}\bigr)\bigl(\Lbar^{d+1-k}\cdot\Hbar^{k}\bigr)\,\widehat{\deg}_{k,\Lbar}(f),
\end{align*}
equivalently,
\[
\widehat{\deg}_{k,\Hbar}(f)\leq \frac{2^{2k(d+1-k)}\big(\Lbar^k\cdot \Hbar^{d+1-k}\big)\big(\Lbar^{d+1-k}\cdot\Hbar^{k}\big)}{\big(\Lbar^{d+1}\big)^2}\,\widehat{\deg}_{k,\Lbar}(f).
\]
The constant in this inequality is independent of $f$. Replacing $f$ by $f^n$, taking $n$-th roots, and letting $n\to\infty$, we prove
\[
\lim_{n\to\infty}
\widehat{\deg}_{k,\overline{H}}(f^n)^{1/n}
\leq
\lim_{n\to\infty}
\widehat{\deg}_{k,\overline{L}}(f^n)^{1/n}.
\]
Exchanging $\overline{H}$ and $\overline{L}$ proves the reverse inequality, so the two limits are equal.
\end{proof}

Since every Hermitian line bundle defines an adelic line bundle, the definition of arithmetic dynamical degrees using adelic line bundles is the same as that in \cite{Song26}, which uses Hermitian line bundles.
Moreover, by \cite[Theorem 1.5]{Song26}, we have the relative degree formula
\begin{equation}\label{eq:relative-degree-formula}
\widehat{\lambda}_k(f)=\max\{\lambda_k(f),\lambda_{k-1}(f)\}
\end{equation}
for all $1\leq k\leq d$.
As a straightforward consequence, we have the following relation between maximal affine intervals of $(\log\widehat{\lambda}_i(f))_{i=0}^{d+1}$ and of $(\log\lambda_i(f))_{i=0}^d$.

\begin{lemma}\label{lem:maximal_affine_interval-N}
Let $X$ be a normal projective variety of dimension $d$ over a number field $K$, and let $f\colon X\dashrightarrow X$ be a dominant rational self-map.
\begin{enumerate}[label=\textnormal{(\arabic*)}]
\item If $\lambda_1(f)>1$, then $[0,s]$ is a maximal affine interval of $(\log\lambda_i(f))_{i=0}^d$ if and only if it is also a maximal affine interval of $(\log\widehat{\lambda}_i(f))_{i=0}^{d+1}$.

\item If $\lambda_1(f)=1$, then $\widehat{\lambda}_i(f)=1$ for all $0\leq i\leq d+1$; in particular, $[0,d+1]$ is the maximal affine interval of $(\log\widehat{\lambda}_i(f))_{i=0}^{d+1}$.
\end{enumerate}
\end{lemma}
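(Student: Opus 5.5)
The plan is to use the relative degree formula \eqref{eq:relative-degree-formula}, $\widehat{\lambda}_k(f)=\max\{\lambda_k(f),\lambda_{k-1}(f)\}$ for $1\leq k\leq d$, together with the conventions $\widehat{\lambda}_0(f)=\lambda_0(f)=1$ and $\widehat{\lambda}_{d+1}(f)=\lambda_d(f)$. Let $\ell_i\coloneqq\log\lambda_i(f)$ and $\widehat{\ell}_i\coloneqq\log\widehat{\lambda}_i(f)$. Here $\widehat{\ell}_0=0$ follows from the definition, since $\widehat{\deg}_{0}$ does not depend on $n$. For $\widehat{\ell}_{d+1}$, the projection formula gives $\widehat{\deg}_{d+1,\Hbar}(f^n)=\deg(f^n)(\Hbar^{d+1})$, hence $\widehat{\ell}_{d+1}=\ell_d$. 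I use the standard geometric facts: $(\ell_i)$ is concave with $\ell_0=0$ and every $\ell_i\geq0$. It follows that $(\ell_i)$ increases and then decreases.

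For part (2), suppose $\lambda_1(f)=1$. Concavity with $\ell_0=\ell_1=0$ gives $\ell_i\leq 0$ for all $i$, and nonnegativity then forces every $\ell_i=0$. The relative degree formula and the endpoint values give $\widehat{\ell}_i=0$ for all $0\leq i\leq d+1$. The whole interval $[0,d+1]$ is therefore affine, so it is the maximal affine interval.

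For part (1), suppose $\lambda_1(f)>1$, so $\ell_1>\ell_0=0$. Let $m$ be the smallest index with $\ell_m\geq\ell_{m+1}$, or $m=d$ if no such index exists. Concavity gives $\ell_{k}>\ell_{k-1}$ for all $k\leq m$, so $\widehat{\ell}_k=\ell_k$ for $0\leq k\leq m$. The affine intervals of $(\ell_i)$ starting at $0$ are the sets $[0,s]$ on which $\ell_i=i\ell_1$. If such a segment reached past $m$, then $\ell_{m+1}=\ell_m+\ell_1>\ell_m$, contradicting the choice of $m$ (when $m<d$). Hence every affine interval $[0,s]$ of $(\ell_i)$ has $s\leq m$. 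It follows that the affine initial segments of $(\ell_i)$ and $(\widehat{\ell}_i)$ agree up to index $m$.

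The remaining step, and the main point, is to show that $(\widehat{\ell}_i)$ cannot continue the line $i\ell_1$ past the index where $(\ell_i)$ breaks. I split according to where the maximal affine interval $[0,s]$ of $(\ell_i)$ ends.
\begin{itemize}
\item If $s<m$, the break already occurs among indices where $\widehat{\ell}=\ell$. Indeed, $2\ell_s-\ell_{s-1}-\ell_{s+1}>0$ with $s+1\leq m$, so maximality transfers in both directions.
\item If $s=m<d$, then $\widehat{\ell}_{s+1}=\max\{\ell_{s+1},\ell_s\}=\ell_s<\ell_s+\ell_1$. Since $\ell_1>0$, this is a strict break for $(\widehat{\ell}_i)$ at $s$.
\item If $s=m=d$, then $\widehat{\ell}_{d+1}=\ell_d<\ell_d+\ell_1$, which again is a break at $d$.
\end{itemize}
In each case, $[0,s]$ is maximal affine for $(\widehat{\ell}_i)_{i=0}^{d+1}$.

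Conversely, suppose $[0,s]$ is maximal affine for $(\widehat{\ell}_i)$. Then $s\leq m$: the value at $m+1$ is $\ell_m$ if $m<d$, and $\widehat{\ell}_{d+1}=\ell_d$ if $m=d$, and in either case it breaks the line. On $[0,m]$ the two sequences agree, so $[0,s]$ is affine for $(\ell_i)$. If $s<m$, its maximality for $(\ell_i)$ follows from agreement at index $s+1$. If $s=m$, any extension for $(\ell_i)$ is impossible by the choice of $m$. This completes both directions.
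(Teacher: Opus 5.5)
Your proof is correct and follows essentially the same route as the paper: both read off $(\log\widehat{\lambda}_i(f))_{i=0}^{d+1}$ from the relative degree formula together with the unimodal shape of the concave sequence $(\log\lambda_i(f))_{i=0}^{d}$. Your index $m$ is exactly the paper's plateau start $p$ (the paper records the comparison as $p=p'$ and $q'=q+1$), and your case analysis spells out the step the paper leaves implicit: an initial affine segment of positive slope $\ell_1$ cannot extend past $p$, and the two sequences coincide on $[0,p]$.
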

\begin{proof}
By log-concavity of dynamical degrees, there exist unique $0\leq p\leq q\leq d$ such that
\[
1=\lambda_0<\lambda_1<\dots<\lambda_p=\dots=\lambda_q>\dots>\lambda_d=\deg(f).
\]
Similarly, by log-concavity of arithmetic dynamical degrees \eqref{eq:arith-dyn-deg-log-concave}, we also have unique $0\leq p'\leq q'\leq d+1$ such that
\[
1=\widehat{\lambda}_0<\widehat{\lambda}_1<\dots<\widehat{\lambda}_{p'}=\dots=\widehat{\lambda}_{q'}>\dots>\widehat{\lambda}_{d+1}=\deg(f).
\]
Now thanks to the relative degree formula \eqref{eq:relative-degree-formula}, one has $p=p'$ and $q'=q+1$.
\cref{lem:maximal_affine_interval-N} thus follows.
\end{proof}

The following comparison lemma, which will be used in the proof of \cref{thm:ht-degree-bound-N}, has appeared in the proof of \cite[Theorem~1.5]{Song26} for arithmetic degrees defined by Hermitian line bundles. For completeness, we include a proof in the current adelic setting.

\begin{lemma}\label{lemma:degree-comparison}
Let $X$ be a normal projective variety of dimension $d$ over a number field $K$, and let $\overline H$ be a nef and big $\bQ$-adelic line bundle on $X$. 
Then for every dominant rational self-map $g\colon X\dashrightarrow X$ and every $0\leq k\leq d$, we have
\begin{equation}\label{eq:arith-degree-vs-geom-degree}
\widehat{\deg}_{k,\overline H}(g)\geq \frac{\bigl(\Hbar^{d+1}\bigr)}{(d+1)(H^d)}\deg_{k,H}(g).
\end{equation}
\end{lemma}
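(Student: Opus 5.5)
The plan is to peel off from $\Hbar$ a multiple of the ``trivial'' adelic line bundle. That leaves a pseudo-effective remainder, and the arithmetic degree splits into a nonnegative part plus a geometric degree.

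Let $\Ebar\coloneqq(\cO_X,(\|\cdot\|_v)_v)$ be the trivial line bundle, metrized so that $\|1\|_v=1$ at all places except one archimedean place where $\|1\|_v$ is a constant $<1$. Normalize the constant so that for any integrable $\Lbar_1,\dots,\Lbar_d$ on a $d$-dimensional $Y$,
\[
(\Ebar\cdot\Lbar_1\cdots\Lbar_d)_Y=L_1\cdots L_d.
\]
This follows from the standard formula for intersecting with a bundle pulled back from $\operatorname{Spec}K$, and it is preserved under pullback by any morphism. Set $t\coloneqq(\Hbar^{d+1})/((d+1)(H^d))$ and $\Mbar\coloneqq\Hbar-t\Ebar$.

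First I will show that $\Mbar$ is pseudo-effective on $X$. For $0<s<t$, the arithmetic Hilbert--Samuel/Siu-type inequality for nef adelic bundles (Yuan's arithmetic Siu inequality, cf.\ \cite{Yuan08}) gives
\[
\widehat{\mathrm{vol}}(\Hbar-s\Ebar)\geq(\Hbar^{d+1})-(d+1)s(H^d)>0 .
\]
Here $\Hbar$ is nef and $s\Ebar$ is nef with $(E\cdot H^{d})=0$, and $(\Hbar^d\cdot\Ebar)=(H^d)$. So $\Hbar-s\Ebar$ is big for every $s<t$. Adding any big $\overline{N}$ and letting $s\to t$ using openness of bigness, $\Mbar+\overline{N}$ is big, so $\Mbar$ is pseudo-effective. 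This step, including the exact normalization of $\Ebar$ producing the factor $(d+1)(H^d)$, is the main obstacle. If the Siu-type volume bound is not directly available in the adelic setting, I would fall back on approximating $\Hbar$ by model (Hermitian) metrics and applying Yuan's inequality there.

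Next, on $\widetilde{\Gamma}$ with projections $p,q$ for $g$, the pullback $p^*\Mbar$ is pseudo-effective by \cref{lemma:arithmetic-pullback-positivity}. Since $k\leq d$, there is at least one factor $p^*\Hbar$ in $\widehat{\deg}_{k,\Hbar}(g)$. Writing that factor as $p^*\Mbar+t\,p^*\Ebar$ gives
\[
\widehat{\deg}_{k,\Hbar}(g)=\bigl(p^*\Mbar\cdot(p^*\Hbar)^{d-k}(q^*\Hbar)^k\bigr)+t\bigl(p^*\Ebar\cdot(p^*\Hbar)^{d-k}(q^*\Hbar)^k\bigr).
\]
The first term is $\geq0$, because the intersection of a pseudo-effective adelic bundle with nef ones is nonnegative (by approximating with big bundles and using effective sections). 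The second term equals $t\,\bigl((p^*H)^{d-k}\cdot(q^*H)^k\bigr)=t\deg_{k,H}(g)$ by the normalization of $\Ebar$, which yields \eqref{eq:arith-degree-vs-geom-degree}.
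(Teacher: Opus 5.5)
Your proposal is correct and follows essentially the same route as the paper: you peel off a multiple of the pullback $\pi^*\Ebar$ of a degree-one bundle on $\operatorname{Spec}K$, use Yuan's inequality (available adelically as \cite[Theorem~5.2.2(2)]{YZ26}) to get bigness, and then combine nonnegativity of pseudo-effective-times-nef intersections on $\widetilde\Gamma$ with the identity $\bigl(p^*\pi^*\Ebar\cdot(p^*\Hbar)^{d-k}\cdot(q^*\Hbar)^k\bigr)=\deg_{k,H}(g)$. The only difference is that the paper uses the big bundle $t\Hbar-\pi^*\Ebar$ with $t$ strictly past the threshold and passes to the limit in the final inequality, which avoids your separate pseudo-effectivity argument at the threshold itself.
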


\begin{proof}
Let $\pi\colon X\to\operatorname{Spec}K$ be the structural morphism. Choose a nef adelic line bundle $\overline E\in\widehat{\mathrm{Pic}}(K)$ with $\widehat{\deg}(\overline E)=1$.
Since $\overline H$ is nef and big, we have $(\overline H^{d+1})>0$. Fix a rational number $t>(d+1)(H^d)/(\overline H^{d+1})$.
Yuan's inequality \cite[Theorem~5.2.2(2)]{YZ26} then implies
\begin{align*}
\widehat{\mathrm{vol}}\bigl(t\overline H-\pi^*\overline E\bigr) &\geq t^{d+1}(\overline H^{d+1}) - (d+1)t^d \bigl(\overline H^d\cdot\pi^*\overline E\bigr)\\
&=t^d\bigl(t(\overline H^{d+1})-(d+1)(H^d)\bigr)>0.
\end{align*}
Thus $t\overline H-\pi^*\overline E$ is big.

Fix a dominant rational self-map $g\colon X\ratmap X$ and $0\leq k\leq d$.
Let $\Gamma\subset X\times_K X$ be the closure of the graph
of $g$, let $\widetilde\Gamma$ be its normalization, and denote
the two induced morphisms by $\pi_1,\pi_2\colon\widetilde\Gamma\to X$. Since both $\pi_i^*\Hbar$ are nef and big, by \cite[Lemma~5.2.13(b)]{YZ26}, we have
\[
\begin{aligned}
\widehat{\deg}_{k,\overline H}(g)&=\bigl((\pi_1^*\overline H)^{d+1-k} \cdot (\pi_2^*\overline H)^k\bigr)_{\widetilde{\Gamma}}
\geq\frac{1}{t}\bigl((\pi_1^*\overline H)^{d-k} \cdot (\pi_2^*\overline H)^k\cdot\pi_1^*\pi^*\overline E\bigr)_{\widetilde{\Gamma}}\\
&=\frac{\widehat{\deg}(\Ebar)}{t}\bigl((\pi_1^*H)^{d-k} \cdot (\pi_2^*H)^k\bigr)_{\widetilde{\Gamma}}
=\frac{1}{t}\deg_{k,H}(g).
\end{aligned}
\]
As $t$ is arbitrary, letting $t$ approximate $(d+1)(H^d)/(\overline H^{d+1})$ proves \eqref{eq:arith-degree-vs-geom-degree}.
\end{proof}

\begin{remark}
Similar to the proof of the (non-optimal) arithmetic reverse Khovanskii--Teissier inequality \cref{thm:rKT}, one can also use \cref{lemma:three-variable-rKT} to prove \cref{lemma:degree-comparison} with the cost of a weaker uniform constant $\frac{(\overline H^{d+1})}{2^d(H^d)}$, still independent of the map $g$.
\end{remark}

\section{Arithmetic realization of ordered Hodge--Lorentz packages}\label{sec:arithmetic-temporal-estimate}

In this section, we realize the arithmetic data on finite-time orbit spaces as ordered Hodge--Lorentz packages.
The related positivity notions are arithmetic nefness and arithmetic pseudo-effectiveness.
Then by applying the abstract Hodge--Lorentz package theory developed in \cite[\S 2]{Hu-degree-growth} which builds on \cite{HJ-degree-growth}, we prove the following uniform temporal estimate \cref{thm:arithmetic-temporal-estimate} on arithmetic degrees.
We then deduce \cref{thm:strict-break-asymptotic,thm:arithmetic-degree-growth-upper-bound} from it.

\begin{theorem}\label{thm:arithmetic-temporal-estimate}
Let $f\colon X\ratmap X$ be a dominant rational self-map of a normal projective variety $X$ of dimension $d$ over a number field $K$, and let $\Hbar$ be a nef and big adelic $\bQ$-line bundle on $X$.
Then for every $1\leq k\leq d$, there exists a constant $C'_{f,\Hbar,k}>0$ such that for all $n\geq1$,
\begin{equation}\label{eq:arithmetic-temporal-estimate}
\left|\log\frac{\widehat{\deg}_{k,\Hbar}(f^n)^2}{\widehat{\deg}_{k,\Hbar}(f^{n-1})\,\widehat{\deg}_{k,\Hbar}(f^{n+1})}\right|
\leq C'_{f,\Hbar,k} \frac{\sqrt{\widehat{\deg}_{k-1,\Hbar}(f^n)\,\widehat{\deg}_{k+1,\Hbar}(f^n)}}{\widehat{\deg}_{k,\Hbar}(f^n)}.
\end{equation}
\end{theorem}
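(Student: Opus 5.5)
We will realize the arithmetic degrees of finite orbit segments inside a single ordered Hodge--Lorentz package, and then apply the abstract temporal estimate of \cite[\S 2]{Hu-degree-growth}.

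\textbf{Orbit space and classes.} Fix $n\geq1$ and let $Y_n$ be the normalization of the closure of the graph of
\[
(f,f^2,\dots,f^{n+1})\colon X\ratmap X^{n+1}.
\]
It comes with surjective morphisms $\pi_0,\dots,\pi_{n+1}\colon Y_n\to X$ satisfying $\pi_{j+1}=f\circ\pi_j$. Set $\Hbar_j\coloneqq\pi_j^*\Hbar$; by \cref{lemma:arithmetic-pullback-positivity} these are nef and big. As in \eqref{eq:intersection-degree-identities}, the projection formula gives
\[
\bigl(\Hbar_i^{d+1-k}\cdot\Hbar_j^k\bigr)_{Y_n}=\deg(f)^{i}\,\widehat{\deg}_{k,\Hbar}(f^{j-i}),\qquad i\leq j.
\]
The three quantities in \eqref{eq:arithmetic-temporal-estimate} therefore appear as mixed intersections of the ordered triple $\Hbar_0,\Hbar_1,\Hbar_{n+1}$, or of $\Hbar_0,\Hbar_n,\Hbar_{n+1}$, on $Y_n$ after normalizing by powers of $\deg(f)$. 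Concretely, the ratio $\widehat{\deg}_k(f^n)^2/(\widehat{\deg}_k(f^{n-1})\widehat{\deg}_k(f^{n+1}))$ compares the $(0,n)$, $(1,n+1)$, $(1,n)$ and $(0,n+1)$ pairings.

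\textbf{Package axioms.} Take $V$ to be the real span of the $\Hbar_j$ in $\widehat{\mathrm{Pic}}(Y_n)_{\mathrm{int}}$, with the nef cone and the pseudo-effective cone as the positivity notions. Use the $(d+1)$-linear arithmetic intersection form as the multilinear form.
\begin{itemize}
\item[(i)] Monotonicity: products of nef classes with pseudo-effective classes are nonnegative. This is \cite[Lemma~5.2.13]{YZ26}.
\item[(ii)] Hodge index: every mixed form with $d-1$ nef entries has positive index at most one. This is \cref{cor:positive_index}.
\item[(iii)] A uniform one-step comparison: we need $\Hbar_{j+1}\leq C\,\Hbar_j$ in a suitable pseudo-effective sense, with $C$ independent of $n$ and $j$. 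We obtain it by pulling back a single relation on the graph $\widetilde\Gamma_1$ of $f$ along the projection $Y_n\to\widetilde\Gamma_1$ given by the coordinates $(j,j+1)$. That relation should have the form
\[
\mathrm{vol}\text{-positivity of }C\,p_1^*\Hbar-q_1^*\Hbar\ \text{modulo exceptional effective classes},
\]
or more simply a two-sided rKT bound obtained from \cref{thm:rKT}.
\end{itemize}
The ordered Hodge--Lorentz package of \cite{Hu-degree-growth} needs exactly these three ingredients. Its constants depend only on $C$, $d$ and $k$, so in particular they are independent of $n$.

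\textbf{Conclusion.} Once $(V,\text{form},\text{cones})$ is verified to be an ordered Hodge--Lorentz package, we apply the abstract temporal estimate \cite[\S 2]{Hu-degree-growth}. It bounds $|\log(\text{discrete second difference})|$ by a constant times the ratio of the geometric mean of the $(k\pm1)$ neighbors to the middle term. Translated through the identities above, this is \eqref{eq:arithmetic-temporal-estimate}. The constant must be checked to be uniform in $n$; the powers $\deg(f)^{i}$ cancel in the ratios.

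\textbf{Main obstacle.} The hardest step is (iii), the uniform comparison between consecutive $\Hbar_j$ in the arithmetic setting. It is only available after accounting for indeterminacy, and in the adelic setting it must be phrased via intersection numbers rather than via divisors. My first attempt would replace genuine pseudo-effectivity by the inequalities it is used for: bounds of the form $(\Hbar_{j+1}\cdot\alpha)\leq C(\Hbar_j\cdot\alpha)$ for products $\alpha$ of nef classes. I would prove these with the rKT trick of \cref{thm:rKT} and \cref{lemma:three-variable-rKT}, with a constant depending only on $\widehat{\deg}_{\bullet}(f)$ and $d$.
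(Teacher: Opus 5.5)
Your architecture is the paper's: the same orbit space $Y_N$ with $N=n+1$, the same package (nonnegativity from \cite[Lemma~5.2.13]{YZ26}, signature from \cref{cor:positive_index}), the same two-time-shift identity \cref{lemma:arithmetic-two-time-shift}, and the same abstract estimate \cref{thm:abstract-log-laplacian}. Its constant $2k(d+1-k)C^{2d+6}$ is uniform in $n$ once $C$ is. The real difference is step (iii), which you leave tentative.

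The paper proves \cref{lemma:arithmetic-adjacent-comparison} by Yuan's inequality applied directly on $Y_N$. The two-time-shift identities give $\widehat{\mathrm{vol}}(t\Hbar_{m,N}-\Hbar_{m+1,N})\geq t^d\lambda_d(f)^m\bigl(t(\Hbar^{d+1})-(d+1)\widehat{\deg}_{1,\Hbar}(f)\bigr)$, and symmetrically with $\widehat{\deg}_{d,\Hbar}(f)/\lambda_d(f)$. So a single $t$ independent of $m,N$ makes both differences big.

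Both of your suggestions also close the step. The pullback version needs no ``exceptional classes''. Yuan's inequality on $\widetilde\Gamma_1$ already makes $t\,p_1^*\Hbar-q_1^*\Hbar$ big for $t>(d+1)\widehat{\deg}_{1,\Hbar}(f)/(\Hbar^{d+1})$, and symmetrically for $t\,q_1^*\Hbar-p_1^*\Hbar$. Its pullback along the morphism $Y_N\to\widetilde\Gamma_1$ induced by $(\pi_{j,N},\pi_{j+1,N})$ is pseudo-effective by \cref{lemma:arithmetic-pullback-positivity}. This is just the paper's argument relocated to $\widetilde\Gamma_1$.

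The rKT version is a genuinely different route, but you must then change the cone rather than keep the pseudo-effective one. Set $\cP'\coloneqq\{\xi\in V_N: Q(\xi,\eta_2,\dots,\eta_{d+1})\geq0\ \text{for all}\ \eta_i\in\cN_N\}$. Together with $\cN_N$ and $Q$ this is again an ordered Hodge--Lorentz package, because \cref{def:ordered-HL} uses $\cP$ only through nonnegativity. Now apply \cref{thm:rKT} with $k=1$, $\Lbar=\Hbar_{j,N}$, $\Nbar_1=\Hbar_{j+1,N}$, extending by continuity to the remaining entries. For every product $\alpha$ of $d$ classes in $\cN_N$ this gives $(\Hbar_{j+1,N}\cdot\alpha)\leq 2^d\,\widehat{\deg}_{1,\Hbar}(f)\,(\Hbar^{d+1})^{-1}(\Hbar_{j,N}\cdot\alpha)$. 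Swapping the roles gives the constant $2^d\,\widehat{\deg}_{d,\Hbar}(f)/\bigl(\lambda_d(f)(\Hbar^{d+1})\bigr)$. Both constants are uniform in $j$ and $N$.

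What each buys: your route needs only the arithmetic Hodge index theorem (through \cref{thm:rKT}) and never touches arithmetic volumes, at the price of a numerically defined cone. The paper's route yields honest bigness in the natural pseudo-effective cone.
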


\begin{definition}\label{def:ordered-HL}
Let $V$ be a finite-dimensional real vector space, let $\cN\subseteq\cP\subseteq V$ be nonempty convex cones, and let $Q\colon V^{d+1}\to\bR$ be a symmetric multilinear form.
We call $(V,\cN,\cP,Q)$ an \emph{ordered Hodge--Lorentz package} of degree $d+1$ if the following conditions hold:
\begin{enumerate}[label=\textnormal{(\roman*)}]
\item \label{def:ordered-HL-i} Nonnegativity: For every $\xi\in\cP$ and $\eta_2,\ldots,\eta_{d+1}\in\cN$,
\[
Q(\xi,\eta_2,\ldots,\eta_{d+1})\geq0.
\]
\item \label{def:ordered-HL-ii} Lorentzian: For every $\eta_3,\ldots,\eta_{d+1}\in\cN$, the symmetric bilinear form
\[
\sB\colon V\times V \to \bR, \quad (v_1,v_2)\longmapsto Q(v_1,v_2,\eta_3,\ldots,\eta_{d+1})
\]
has positive index at most one.
\end{enumerate}
We write $v_1\preceq_{\cP}v_2$ if $v_2-v_1\in\cP$; this is the preorder induced by $\cP$.
\end{definition}

\begin{lemma}\label{lemma:arithmetic-realization}
Let $Y$ be a normal projective variety of dimension $d$ over a number field $K$.
Let $V_\bQ \subset \widehat{\mathrm{Pic}}(Y)_{\mathrm{int},\bQ}$ be a finite-dimensional $\bQ$-vector space.
Denote $V\coloneqq V_\bQ \otimes_\bQ \bR$.
Let $\cN_V\subset V$ (resp. $\cP_V\subset V$) be the closure of the cone generated by nef (resp. pseudo-effective) adelic $\bQ$-line bundles containing in $V$.
Then the tuple $(V,\cN_V,\cP_V,Q)$, where by extending scalars,
\[
Q(\Mbar_1,\ldots,\Mbar_{d+1}) \coloneqq \bigl(\Mbar_1\cdots\Mbar_{d+1}\bigr)_Y,
\]
is an ordered Hodge--Lorentz package of degree $d+1$.
\end{lemma}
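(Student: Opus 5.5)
The plan is to check the three structural requirements of \cref{def:ordered-HL} in turn: that $\cN_V\subseteq\cP_V$ are nonempty convex cones and $Q$ is symmetric multilinear, the nonnegativity condition \ref{def:ordered-HL-i}, and the Lorentzian condition \ref{def:ordered-HL-ii}. Symmetry and multilinearity of $Q$ on $V_\bQ$ are standard properties of the arithmetic intersection pairing of integrable adelic $\bQ$-line bundles \cite[\S A.5]{YZ26}; we extend $\bR$-multilinearly to $V$. Both cones are convex by construction as closures of generated cones. Every nef adelic $\bQ$-line bundle is pseudo-effective: if $\Nbar$ is nef and $\Mbar$ is big, then $\widehat{\mathrm{vol}}(\Nbar+\Mbar)\geq\widehat{\mathrm{vol}}(\Mbar)>0$, since adding a nef bundle does not decrease the volume (small sections of $m\Mbar$ multiplied by sections of $m\Nbar$ of small norm, after a harmless twist, or by the standard monotonicity in \cite{YZ26}). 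Hence $\cN_V\subseteq\cP_V$. We may assume these cones are nonempty; $0$ lies in both in any case.

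For nonnegativity, by multilinearity and continuity of $Q$ on the finite-dimensional space $V$, it suffices to take $\xi$ a pseudo-effective adelic $\bQ$-line bundle in $V_\bQ$ and $\eta_2,\dots,\eta_{d+1}$ nef adelic $\bQ$-line bundles in $V_\bQ$. The closure operation is harmless because $Q$ is continuous, so the inequality $Q\geq0$ persists under limits and nonnegative combinations. The needed input is the standard fact that the intersection of a pseudo-effective integrable adelic line bundle with nef ones is nonnegative: $(\xi\cdot\eta_2\cdots\eta_{d+1})\geq0$. This is \cite[Lemma~5.2.13]{YZ26}, the same fact already used in the proof of \cref{lemma:degree-comparison}. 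If needed, one first perturbs each $\eta_i$ by $\epsilon\Hbar$ with $\Hbar$ nef and big, and perturbs $\xi$ to $\xi+\epsilon\Hbar$, which is big; the inequality for big $\xi$ follows from the effective-section argument. One then lets $\epsilon\to0$.

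For the Lorentzian condition, fix $\eta_3,\dots,\eta_{d+1}\in\cN_V$. When all of them are nef adelic $\bQ$-line bundles, \cref{cor:positive_index}, applied with $\Hbar_i=\eta_{i+2}$, gives positive index at most one for $\sB$ on $V$. For general elements of $\cN_V$, we write each as a limit of nonnegative real combinations of nef $\bQ$-bundles in $V_\bQ$. Nonnegative rational combinations of nef bundles are nef, so we approximate by nef $\bQ$-bundles with rational coefficients. The condition ``positive index at most one'' is closed under limits of symmetric bilinear forms on a fixed finite-dimensional space: the set of forms having a positive definite $2$-plane is open. So the limiting form $\sB$ also has positive index at most one.

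The main obstacle is purely bookkeeping about closures and real coefficients. \cref{cor:positive_index} and the nonnegativity lemma are stated for nef $\bQ$-bundles, whereas $\cN_V$ and $\cP_V$ are closed real cones. The openness and continuity argument above handles this.
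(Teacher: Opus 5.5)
Your proposal is correct and follows the paper's proof: nef $\Rightarrow$ pseudo-effective gives $\cN_V\subseteq\cP_V$, nonnegativity is \cite[Lemma~5.2.13(b)]{YZ26} extended to the closed real cones by continuity, and the Lorentzian condition is \cref{cor:positive_index} extended by approximation. You make that approximation explicit through closedness of the locus of forms with positive index at most one, which the paper leaves implicit. The only weak spot is your parenthetical section-multiplication heuristic for nef implying pseudo-effective: a nef adelic line bundle need not have any nonzero sections, so that step should rest on \cite[Lemma~5.2.13(a)]{YZ26}, as the paper does.
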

\begin{proof}
It is well known that for an adelic $\bQ$-line bundle, if it is nef then it is pseudo-effective; see \cite[Lemma~5.2.13(a)]{YZ26}.
Hence we have $\cN_V\subset \cP_V$.
By continuity, it suffices to verify the nonnegativity condition~\ref{def:ordered-HL-i} for adelic $\bQ$-line bundles, which is exactly \cite[Lemma~5.2.13(b)]{YZ26}.
The Lorentzian signature condition~\ref{def:ordered-HL-ii} has been proved in \cref{cor:positive_index} for adelic $\bQ$-line bundles.
Again the general case follows by approximation.
\end{proof}

Recall the following abstract estimate for the discrete logarithmic Laplacian associated with an ordered Hodge--Lorentz package.

\begin{theorem}[{see \cite[Theorem~2.6]{Hu-degree-growth}}]
\label{thm:abstract-log-laplacian}
Fix integers $d\geq1,N\geq2$.
Let $(V,\cN,\cP,Q)$ be an ordered Hodge--Lorentz package of degree $d+1$, and let $\eta_0,\ldots,\eta_N\in\cN$.
Suppose that there exist constants $C\geq1$ and $\lambda>0$, and positive numbers
\[
\delta_i(m)>0,\qquad
0\leq i\leq d+1,\ 0\leq m\leq N,
\]
such that the following hypotheses hold.

\begin{enumerate}[label=\textnormal{(\roman*)}]
\item\label{thm:abstract-log-laplacian-comparison}
For every $0\leq m<N$,
\[
C^{-1}\eta_m\preceq_{\cP}\eta_{m+1}\preceq_{\cP} C\eta_m.
\]

\item\label{thm:abstract-log-laplacian-two-time-shift}
For every $0\leq m\leq\ell\leq N$ and $0\leq i\leq d+1$,
\[
Q\bigl(\eta_m^{[d+1-i]},\eta_\ell^{[i]}\bigr)=\lambda^m\delta_i(\ell-m).
\]
\end{enumerate}
Then for every $1\leq k\leq d$ and $1\leq n\leq N-1$,
\begin{equation}\label{eq:abstract-log-laplacian}
\left|\log\frac{\delta_k(n)^2}{\delta_k(n-1)\delta_k(n+1)}\right|
\leq 2k(d+1-k)C^{2d+6}\frac{\sqrt{\delta_{k-1}(n)\delta_{k+1}(n)}}{\delta_k(n)}.
\end{equation}
\end{theorem}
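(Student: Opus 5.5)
The plan is to reduce the logarithmic Laplacian to a product of $k(d+1-k)$ elementary ``cross-ratios''. Each cross-ratio is controlled by the Lorentzian signature, in the same way as the telescoping argument in the proof of \cref{thm:rKT}.

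\emph{Step 1: rewrite the four degrees.} Using hypothesis~\ref{thm:abstract-log-laplacian-two-time-shift} with $m\in\{0,1\}$ and $\ell\in\{n,n+1\}$:
\[
\delta_k(n)=Q(\eta_0^{[d+1-k]},\eta_n^{[k]})=\lambda^{-1}Q(\eta_1^{[d+1-k]},\eta_{n+1}^{[k]}),\qquad \delta_k(n-1)=\lambda^{-1}Q(\eta_1^{[d+1-k]},\eta_n^{[k]}),\qquad \delta_k(n+1)=Q(\eta_0^{[d+1-k]},\eta_{n+1}^{[k]}).
\]
Hence the quantity inside the logarithm equals $a_{0,0}a_{d+1-k,k}/(a_{d+1-k,0}a_{0,k})$, where for $0\le i\le d+1-k$ and $0\le j\le k$
\[
a_{i,j}\coloneqq Q\bigl(\eta_0^{[d+1-k-i]},\eta_1^{[i]},\eta_n^{[k-j]},\eta_{n+1}^{[j]}\bigr)>0 .
\]
Telescoping over the grid gives
\[
\frac{a_{0,0}a_{d+1-k,k}}{a_{d+1-k,0}a_{0,k}}=\prod_{i,j}\frac{a_{i,j}a_{i+1,j+1}}{a_{i+1,j}a_{i,j+1}},
\]
a product of $k(d+1-k)$ factors. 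This accounts for the prefactor $k(d+1-k)$, and it suffices to bound $|\log|$ of each factor by $2C^{2d+6}\sqrt{\delta_{k-1}(n)\delta_{k+1}(n)}/\delta_k(n)$.

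\emph{Step 2: one cross-ratio.} Fix $(i,j)$. Let $\sB$ be the bilinear form obtained by freezing the remaining $d-1$ slots, which are all in $\cN$. By condition~\ref{def:ordered-HL-ii}, $\sB$ has positive index at most one. Write $x=\eta_0$, $x'=\eta_1$, $y=\eta_n$, $y'=\eta_{n+1}$. The factor is $\sB(x,y)\sB(x',y')/(\sB(x',y)\sB(x,y'))$.

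Decompose orthogonally with respect to $y$ and $x$. Concretely, write $x'=\alpha x+u$ and $y'=\beta y+w$, choosing the coefficients so that the ``first-order'' parts cancel in the cross-ratio. Then $\sB(x',y')=\alpha\beta\,\sB(x,y)+(\text{cross terms})+\sB(u,w)$, and the factor becomes $1+\epsilon$, where $\epsilon$ is a ratio whose numerator is essentially $\sB(u,w)$.

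As in \cref{lemma:three-variable-rKT}, $u$ and $w$ lie in spaces on which $\sB$ is negative semidefinite. Cauchy--Schwarz then gives $|\sB(u,w)|\le\sqrt{-\sB(u,u)}\sqrt{-\sB(w,w)}$. Moreover $-\sB(u,u)\le(\text{const})\,\sB(x',x')$ and $-\sB(w,w)\le(\text{const})\,\sB(y',y')$. Here $\sB(x',x')$ is a $Q$-number with $d+2-k$ early slots and $k-1$ late slots, so it is of size $\delta_{k-1}(n)$ up to powers of $\lambda$. Likewise $\sB(y',y')$ is of size $\delta_{k+1}(n)$, while each $a_{\cdot,\cdot}$ is of size $\delta_k(n)$.

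\emph{Step 3: making the sizes precise.} This is done with hypothesis~\ref{thm:abstract-log-laplacian-comparison}. Replacing $\eta_1$ by $\eta_0$ or $\eta_{n+1}$ by $\eta_n$ in any slot costs a factor at most $C$, by the preorder $\preceq_\cP$ together with nonnegativity~\ref{def:ordered-HL-i}. There are at most $d+1$ slots per term, and the comparisons are used in a numerator and a denominator and for the coefficients $\alpha,\beta$. Tracking this is what produces the constant $C^{2d+6}$, and it yields $|\epsilon|\le C^{2d+6}\sqrt{\delta_{k-1}(n)\delta_{k+1}(n)}/\delta_k(n)$.

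\emph{Main obstacle.} The main difficulty is converting the multiplicative bound $1+\epsilon$ into a bound on $|\log(1+\epsilon)|$ without losing uniformity. In particular, the lower bound on the factor needs a positive margin. My first approach is to split into two cases. When $|\epsilon|\le 1/2$, use $|\log(1+\epsilon)|\le 2|\epsilon|$. Otherwise, bound the factor crudely, from both sides, by powers of $C$ using hypothesis~\ref{thm:abstract-log-laplacian-comparison} alone. Since $C\ge1$ and the right-hand side is then at least of order $1/2$, that crude bound is absorbed into the stated constant. The bookkeeping of the exponent of $C$ in this case split is where the care lies.
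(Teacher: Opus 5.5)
\textbf{The gap is in Step~2.} The paper does not reprove this statement; it quotes it from \cite{Hu-degree-growth}. So I judge your argument on its own terms. Step~1 is correct. It is the same telescoping grid as in the proof of \cref{thm:rKT}, all $a_{i,j}$ are positive by (i), and it reduces the claim to bounding each cross-ratio $\sB(x,y)\sB(x',y')/(\sB(x',y)\sB(x,y'))=1+\epsilon$. In Step~2 you decompose $x'=\alpha x+u$ with $u\perp y$ and $y'=\beta y+w$ with $w\perp x$. This is what kills the first-order terms and gives exactly $\epsilon=\sB(x,y)\sB(u,w)/(\sB(x',y)\sB(x,y'))$. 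But then $u\in y^\perp$ and $w\in x^\perp$ lie in two \emph{different} negative semidefinite subspaces, and $\sB$ need not be $\le0$ on $\mathrm{span}(u,w)$. \Cref{lemma:three-variable-rKT} works only because $u_2,u_3$ both lie in the same $v_1^\perp$.

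The inequality $|\sB(u,w)|\le\sqrt{\sB(u,u)\sB(w,w)}$ really fails here, even in the model case. Take $V=\bR^{1,1}$, $\sB=\mathrm{diag}(1,-1)$, $\cN=\cP$ the closed future cone, $d=k=1$, and $\eta_m=(\cosh m\tau,\sinh m\tau)$. All hypotheses hold with $\lambda=1$ and $C=e^\tau$. With $c=\sinh\tau/\cosh n\tau$ one finds $u=c\,(\sinh n\tau,\cosh n\tau)$ and $w=c\,(0,1)$. Hence $|\sB(u,w)|=c^2\cosh n\tau$, while $\sqrt{\sB(u,u)\sB(w,w)}=c^2$. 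The defect $\cosh n\tau=\delta_1(n)/\sqrt{\delta_0(n)\delta_2(n)}$ is the reciprocal of the very quantity you want to extract, so this is not a matter of constants.

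\textbf{Fix: put both vectors in the same space.} Since $u\perp y$, you have $\sB(u,w)=\sB(u,y')=\sB(u,w')$ with $w'\coloneqq y'-\frac{\sB(y,y')}{\sB(y,y)}\,y\in y^\perp$. Here $\sB(y,y)\ge C^{-(i+j)}\delta_{k+1}(n)>0$, so $\sB|_{y^\perp}\le 0$ and Cauchy--Schwarz applies to $u$ and $w'$. Nonnegativity and (i) give $\sB(x,x')\le C\sB(x,x)$ and $\sB(y,y')\le C\sB(y,y)$; together with $\alpha>0$ these yield
\[
-\sB(u,u)\le(2\alpha C-\alpha^2)\sB(x,x)\le C^2\sB(x,x),\qquad -\sB(w',w')\le\frac{\sB(y,y')^2}{\sB(y,y)}\le C^2\sB(y,y).
\]
Therefore
\[
|\epsilon|\le C^2\,\frac{|\sB(u,w')|}{\sB(x,y)}\le C^4\,\frac{\sqrt{\sB(x,x)\sB(y,y)}}{\sB(x,y)}\le C^{2d+2}\,\frac{\sqrt{\delta_{k-1}(n)\delta_{k+1}(n)}}{\delta_k(n)}.
\]
The last step replaces each frozen $\eta_1,\eta_{n+1}$ by $\eta_0,\eta_n$ via (i). There are $i+j\le d-1$ such replacements, and no power of $\lambda$ enters.

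\textbf{The logarithm step.} This is not the real obstacle, but your case split does not close as budgeted. Suppose all you know is $|\epsilon|\le C^{2d+6}R$, where $R=\sqrt{\delta_{k-1}(n)\delta_{k+1}(n)}/\delta_k(n)$. Then the case $|\epsilon|>1/2$ only gives $2C^{2d+6}R>1$, which does not dominate the crude bound $2\log C$ once $C>\sqrt e$. It is simpler to use the a priori bound $C^{-2}\le 1+\epsilon\le C^2$ from (i). This gives $|\log(1+\epsilon)|\le C^2|\epsilon|\le C^{2d+4}R$, and summing over the $k(d+1-k)$ factors gives the stated estimate with room to spare.
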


Suppose that $f\colon X\ratmap X$ is a dominant rational self-map of a normal projective variety $X$ of dimension $d$ over a number field $K$.
For each fixed $N\geq1$, let $\Gamma_{[N]}$ be the $N$-th iterated graph of $f$, i.e., the closure of the graph of the rational map
\[
X\ratmap X^N, \qquad x\mapsto (f(x),\dots,f^N(x))
\]
in $X^{N+1}$.
Let $Y_N$ denote the normalization of $\Gamma_{[N]}$, which is a normal projective variety of dimension $d$ over $K$.
For $0\leq m\leq N$, each coordinate projection from $X^{N+1}$ to $X$ induces a morphism
\[
\pi_{m,N}\colon Y_N \to X.
\]
Note that $\pi_{0,N}$ is birational and $\pi_{m,N}=f^m\circ\pi_{0,N}$ as rational maps.
In particular, each $\pi_{m,N}$ is surjective and generically finite with
\[
\deg(\pi_{m,N})=\deg(f^m)=\lambda_d(f)^m.
\]
For $0\leq m\leq N$, denote
\[
\Hbar_{m,N}\coloneqq\pi_{m,N}^*\Hbar.
\]
Then by \cref{lemma:arithmetic-pullback-positivity}, each $\Hbar_{m,N}$ is a nef and big adelic $\bQ$-line bundle on $Y_N$.

\begin{lemma}\label{lemma:arithmetic-two-time-shift}
With notation as above, for every $0\leq i\leq d+1$ and $0\leq m\leq\ell\leq N$,
\begin{equation}\label{eq:arithmetic-two-time-shift}
\bigl(\Hbar_{m,N}^{d+1-i}\cdot \Hbar_{\ell,N}^i\bigr)_{Y_N} = \lambda_d(f)^m\,\widehat{\deg}_{i,\overline{H}}(f^{\ell-m}).
\end{equation}
\end{lemma}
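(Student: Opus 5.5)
The identity \eqref{eq:arithmetic-two-time-shift} follows from the projection formula \cite[Proposition~4.1.2]{YZ26} applied to a morphism that factors $\pi_{m,N}$ and $\pi_{\ell,N}$ through the normalized graph of $f^{\ell-m}$. The case $m=\ell$ is immediate: both sides equal $\deg(\pi_{m,N})(\Hbar^{d+1})=\lambda_d(f)^m(\Hbar^{d+1})$, using the convention for $f^0$. So assume $m<\ell$ and set $j\coloneqq\ell-m\geq1$. Let $\widetilde\Gamma_j$ be the normalization of the graph closure of $f^j$, with projections $p_j,q_j\colon\widetilde\Gamma_j\to X$.

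\textbf{Step 1: the factoring morphism.} The morphism $(\pi_{m,N},\pi_{\ell,N})\colon Y_N\to X\times_K X$ agrees, on the dense open set where $\pi_{0,N}^{-1}$ and $f,\dots,f^N$ are defined, with $y\mapsto(x,f^j(x))$ for $x=f^m(\pi_{0,N}(y))$. Hence the image of this open set lies in the graph of $f^j$, and since $Y_N$ is irreducible and the map is proper, $Y_N$ maps onto the closed graph $\Gamma_j$. The image is dense because $\pi_{m,N}$ is dominant, and it is closed by properness. Since $Y_N$ is normal and the map to $\Gamma_j$ is dominant, it lifts uniquely through the normalization $\widetilde\Gamma_j\to\Gamma_j$ by the universal property of normalization. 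This gives a surjective morphism $\rho\colon Y_N\to\widetilde\Gamma_j$ with $p_j\circ\rho=\pi_{m,N}$ and $q_j\circ\rho=\pi_{\ell,N}$.

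\textbf{Step 2: the degree of $\rho$.} Compose with $p_j$, which is birational because $\Gamma_j\to X$ is the graph projection. Then $\deg(\rho)=\deg(p_j\circ\rho)=\deg(\pi_{m,N})=\lambda_d(f)^m$. In particular $\rho$ is generically finite, since both spaces have dimension $d$.

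\textbf{Step 3: projection formula.} We have $\Hbar_{m,N}=\rho^*p_j^*\Hbar$ and $\Hbar_{\ell,N}=\rho^*q_j^*\Hbar$. The projection formula for the generically finite surjective morphism $\rho$ between normal projective varieties gives
\[
\bigl(\Hbar_{m,N}^{d+1-i}\cdot\Hbar_{\ell,N}^{i}\bigr)_{Y_N}=\deg(\rho)\bigl((p_j^*\Hbar)^{d+1-i}\cdot(q_j^*\Hbar)^i\bigr)_{\widetilde\Gamma_j}=\lambda_d(f)^m\,\widehat{\deg}_{i,\Hbar}(f^{j}),
\]
which is \eqref{eq:arithmetic-two-time-shift}.

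The main point to check is Step 1, namely that $(\pi_{m,N},\pi_{\ell,N})$ genuinely lands in the closure of the graph of $f^j$. This relies on the generic identity $\pi_{\ell,N}=f^j\circ\pi_{m,N}$ as rational maps, which follows from $\pi_{\ell,N}=f^\ell\circ\pi_{0,N}$ and $\pi_{m,N}=f^m\circ\pi_{0,N}$ together with the dominance of $f^m$.
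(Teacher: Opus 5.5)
Your proof is correct and follows the same route as the paper. The case $m=\ell$ comes from the projection formula for $\pi_{m,N}$, and for $m<\ell$ you factor $(\pi_{m,N},\pi_{\ell,N})$ through $\widetilde\Gamma_{\ell-m}$ and apply the projection formula; the only difference is that the paper cites the proof of \cite[Lemma~3.2]{Hu-degree-growth} for the factorization and leaves $\deg\bigl(\varphi_{m,\ell}^{(N)}\bigr)=\lambda_d(f)^m$ implicit, whereas you check both (via the universal property of normalization and the birationality of $p_{\ell-m}$).
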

\begin{proof}
Fix $0\leq i\leq d+1$ and $0\leq m\leq\ell\leq N$.
If $m=\ell$, then the projection formula \cite[Proposition~4.1.2]{YZ26} yields that
\[
\bigl(\Hbar_{m,N}^{d+1}\bigr)_{Y_N}=\deg(\pi_{m,N})\bigl(\Hbar^{d+1}\bigr)=\lambda_d(f)^m\,\widehat{\deg}_{i,\overline{H}}(f^{0}).
\]
So we assume that $m<\ell$ and denote $k=\ell-m$.
Recall from the proof of \cite[Lemma~3.2]{Hu-degree-growth} that the morphism $(\pi_{m,N},\pi_{\ell,N})\colon Y_N\to X\times X$ factors through $\widetilde\Gamma_{k}$, the normalization of the graph closure of $f^{k}$.
Denote it by $\varphi_{m,\ell}^{(N)}\colon Y_N\to \widetilde\Gamma_{k}$.
Then we have $\pi_{m,N}=p_k\circ\varphi_{m,\ell}^{(N)}$ and $\pi_{\ell,N}=q_k\circ\varphi_{m,\ell}^{(N)}$.
It again follows from the projection formula that
\[
\bigl(\Hbar_{m,N}^{d+1-i}\cdot \Hbar_{\ell,N}^i\bigr)_{Y_N} = \deg\bigl(\varphi_{m,\ell}^{(N)}\bigr)\bigl((p_{k}^*\Hbar)^{d+1-i}\cdot (q_{k}^*\Hbar)^{i}\bigr)_{\widetilde\Gamma_{k}} = \lambda_d(f)^m\,\widehat{\deg}_{i,\overline{H}}(f^{k}).\qedhere
\]
\end{proof}

\begin{lemma}\label{lemma:arithmetic-adjacent-comparison}
With notation as above, there is a constant $C_{f,\Hbar}\geq1$ such that for every $N\geq1$ and $0\leq m<N$,
\begin{equation}\label{eq:projective-adjacent-comparison}
C_{f,\Hbar}^{-1}\,\Hbar_{m,N}\preceq_{Y_N}\Hbar_{m+1,N}\preceq_{Y_N}C_{f,\Hbar}\,\Hbar_{m,N}.
\end{equation}
\end{lemma}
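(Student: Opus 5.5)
The plan is to reduce the comparison to a single statement on the graph of $f$ and then transport it to $Y_N$ by pullback. The relation $\preceq_{Y_N}$ means that the difference is pseudo-effective on $Y_N$, i.e.\ lies in $\cP_V$. Let $\widetilde\Gamma_1$ be the normalized graph of $f$ with projections $p_1,q_1\colon\widetilde\Gamma_1\to X$. As in the proof of \cref{lemma:arithmetic-two-time-shift}, the morphism $(\pi_{m,N},\pi_{m+1,N})\colon Y_N\to X\times X$ factors through a surjective morphism $\varphi\coloneqq\varphi^{(N)}_{m,m+1}\colon Y_N\to\widetilde\Gamma_1$. This gives
\[
\Hbar_{m,N}=\varphi^*p_1^*\Hbar,\qquad \Hbar_{m+1,N}=\varphi^*q_1^*\Hbar.
\]
By \cref{lemma:arithmetic-pullback-positivity}, pullback along the surjective morphism $\varphi$ preserves pseudo-effectivity. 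So it suffices to find a constant $C=C_{f,\Hbar}\geq1$, depending only on $f$ and $\Hbar$ and not on $N$ or $m$, such that both
\[
C\,p_1^*\Hbar-q_1^*\Hbar \quad\text{and}\quad C\,q_1^*\Hbar-p_1^*\Hbar
\]
are pseudo-effective adelic $\bQ$-line bundles on $\widetilde\Gamma_1$. Since the difference of pullbacks is the pullback of the difference, pulling back by $\varphi$ then gives exactly \eqref{eq:projective-adjacent-comparison}, uniformly in $N$ and $m$.

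On $\widetilde\Gamma_1$, the key input is that a big adelic $\bQ$-line bundle is pseudo-effective, because the big cone is open and is stable under adding big classes. It therefore suffices to show that $C\,\overline{A}-\overline{B}$ is big for large $C$, where $\overline{A},\overline{B}\in\{p_1^*\Hbar,q_1^*\Hbar\}$. Both are nef by \cref{lemma:arithmetic-pullback-positivity}, and both are big since $p_1,q_1$ are generically finite. I will use Yuan's inequality \cite[Theorem~5.2.2(2)]{YZ26}, as in the proof of \cref{lemma:degree-comparison}:
\[
\widehat{\mathrm{vol}}(C\overline{A}-\overline{B})\geq C^{d+1}(\overline{A}^{d+1})-(d+1)C^{d}(\overline{A}^{d}\cdot\overline{B}).
\]
This is positive once $C>(d+1)(\overline{A}^d\cdot\overline{B})/(\overline{A}^{d+1})$, which makes sense since $(\overline{A}^{d+1})>0$. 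Taking $C$ to be the maximum of the two resulting thresholds and $1$ gives the constant $C_{f,\Hbar}$. It depends only on the finitely many intersection numbers $\widehat{\deg}_{i,\Hbar}(f)$ and $\deg(f)(\Hbar^{d+1})$ on $\widetilde\Gamma_1$.

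The main point to check is the factorization $Y_N\to\widetilde\Gamma_1$, which I take from the proof of \cref{lemma:arithmetic-two-time-shift} (via \cite[Lemma~3.2]{Hu-degree-growth}). Beyond that, I need the fact that pullback preserves pseudo-effectivity even when $\varphi$ is only surjective (here it is generically finite anyway, as both sides have dimension $d$). If Yuan's inequality is only available for $\bQ$-coefficients, I will choose $C$ rational, which is harmless.
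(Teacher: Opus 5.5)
Your argument is correct and rests on the same key input as the paper's proof: Yuan's inequality \cite[Theorem~5.2.2(2)]{YZ26}, with exactly the same thresholds $(d+1)\widehat{\deg}_{1,\Hbar}(f)/(\Hbar^{d+1})$ and $(d+1)\widehat{\deg}_{d,\Hbar}(f)/\bigl(\lambda_d(f)(\Hbar^{d+1})\bigr)$. The only difference is where it is applied. The paper works directly on $Y_N$, evaluating the intersection numbers via \cref{lemma:arithmetic-two-time-shift} so that the common factor $\lambda_d(f)^m$ cancels, and this gives bigness on $Y_N$ itself. You instead apply it once on $\widetilde\Gamma_1$ and transport pseudo-effectivity along $\varphi$ using \cref{lemma:arithmetic-pullback-positivity}, which makes the uniformity in $N$ and $m$ automatic.
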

\begin{proof}
Choose a positive integer $t$ such that
\[
t>\frac{d+1}{\bigl(\overline{H}^{d+1}\bigr)}
\max\left\{
\widehat{\deg}_{1,\overline{H}}(f),
\frac{\widehat{\deg}_{d,\overline{H}}(f)}{\lambda_d(f)}
\right\}.
\]
By Yuan's inequality \cite[Theorem~5.2.2(2)]{YZ26} and \cref{lemma:arithmetic-two-time-shift}, we have
\begin{align*}
\widehat{\mathrm{vol}}\bigl(t\overline H_{m,N}-\overline H_{m+1,N}\bigr)
&\geq t^{d+1}\bigl(\overline H_{m,N}^{d+1}\bigr)
-(d+1)t^d
\bigl(\overline H_{m,N}^d\cdot\overline H_{m+1,N}\bigr)\\
&=t^d\lambda_d(f)^m\bigl(\overline{H}^{d+1}\bigr)
\left(t - \frac{d+1}{\bigl(\overline{H}^{d+1}\bigr)} \,\widehat{\deg}_{1,\overline H}(f)\right)>0,
\end{align*}
and similarly,
\begin{align*}
\widehat{\mathrm{vol}}\bigl(t\overline H_{m+1,N}-\overline H_{m,N}\bigr)
&\geq t^{d+1}\bigl(\overline H_{m+1,N}^{d+1}\bigr)
-(d+1)t^d
\bigl(\overline H_{m+1,N}^d\cdot\overline H_{m,N}\bigr)\\
&=t^d\lambda_d(f)^{m+1}\bigl(\overline{H}^{d+1}\bigr)
\left(t-\frac{d+1}{\bigl(\overline{H}^{d+1}\bigr)}\frac{\widehat{\deg}_{d,\overline H}(f)}{\lambda_d(f)}\right)>0.
\end{align*}
Thus, both $t\overline H_{m,N}-\overline H_{m+1,N}$ and
$t\overline H_{m+1,N}-\overline H_{m,N}$ are arithmetically big.
Since $t$ is independent of $m$ and $N$, taking
$C_{f,\overline H}=t$ completes the proof.
\end{proof}

\begin{proof}[Proof of \cref{thm:arithmetic-temporal-estimate}]
Fix $1\leq k\leq d$ and $n\geq1$.
Set $N\coloneqq n+1$.
On $Y_N$, let $V_N$ denote the real span of $\Hbar_{0,N},\dots,\Hbar_{N,N}\in \widehat{\Pic}(Y_N)_{\mathrm{nef},\bQ}$.
Let $\cN_N\subset V_N$ (resp. $\cP_N\subset V_N$) be the closure of the cone generated by nef (resp. pseudo-effective) adelic $\bQ$-line bundles.
Then the tuple $(V_N,\cN_N,\cP_N,Q)$ is an ordered Hodge--Lorentz package of degree $d+1$ by \cref{lemma:arithmetic-realization}.
For $0\leq m\leq N$ and $0\leq i\leq d+1$, set
\[
\eta_m=\Hbar_{m,N},\qquad
\lambda=\lambda_d(f)>0,\qquad
\delta_i(m)=\widehat{\deg}_{i,\Hbar}(f^m)>0.
\]
The hypothesis \ref{thm:abstract-log-laplacian-two-time-shift} in \cref{thm:abstract-log-laplacian} follows from \cref{lemma:arithmetic-two-time-shift}, while \cref{lemma:arithmetic-adjacent-comparison} guarantees a constant $C_{f,\Hbar}>0$ such that for every $0\leq m<N$,
\[
C_{f,\Hbar}^{-1}\,\eta_m\preceq_{\cP_N}\eta_{m+1}\preceq_{\cP_N}C_{f,\Hbar}\,\eta_m.
\]
Hence the inequality \eqref{eq:arithmetic-temporal-estimate} follows from \cref{thm:abstract-log-laplacian} with $C'_{f,\Hbar,k}=2k(d+1-k)C_{f,\Hbar}^{2d+6}$.
\end{proof}

Once \cref{thm:arithmetic-temporal-estimate} has been established, the proofs of \cref{thm:arithmetic-degree-growth-upper-bound,thm:strict-break-asymptotic} are almost the same as that of \cite[Theorems~1.1 and 1.5]{Hu-degree-growth}.
We include them for completeness.

\begin{proof}[Proof of \cref{thm:strict-break-asymptotic}]
Fix $\eta$ with $\widehat{s}_k(f)<\eta<\widehat{\lambda}_k(f)$ and set $q=\eta/\widehat{\lambda}_k(f)$.
By \cref{cor:existence-lambda-hat}, one has
\[
\lim_{n\to\infty} \left(\frac{\sqrt{\widehat{\deg}_{k-1,\Hbar}(f^n)\,\widehat{\deg}_{k+1,\Hbar}(f^n)}}{\widehat{\deg}_{k,\Hbar}(f^n)}\right)^{1/n} = \frac{\widehat{s}_k(f)}{\widehat{\lambda}_k(f)} < \frac{\eta}{\widehat{\lambda}_k(f)} = q,
\]
and hence
\[
\frac{\sqrt{\widehat{\deg}_{k-1,\Hbar}(f^n)\,\widehat{\deg}_{k+1,\Hbar}(f^n)}}{\widehat{\deg}_{k,\Hbar}(f^n)} = O(q^n).
\]
Combining this with \cref{thm:arithmetic-temporal-estimate} yields that
\[
\left|\log\frac{\widehat{\deg}_{k,\Hbar}(f^n)^2}{\widehat{\deg}_{k,\Hbar}(f^{n-1})\,\widehat{\deg}_{k,\Hbar}(f^{n+1})}\right| = O(q^n).
\]
Finally, applying \cite[Lemma~4.1]{Hu-degree-growth} with $a_n=\widehat{\deg}_{k,\Hbar}(f^n)$ and $\alpha=\widehat{\lambda}_k(f)$, we get
\[
\widehat{\deg}_{k,\Hbar}(f^n) = C_{f,\Hbar,k}\, \widehat{\lambda}_k(f)^n \bigl(1+O(q^n)\bigr) = C_{f,\overline{H},k}\, \widehat{\lambda}_k(f)^n + O(\eta^n),
\]
which completes the proof of \eqref{eq:strict-break-asymptotic}.
\end{proof}

For an array $a=(a_{k,n})$, define its \textit{spatial} and \textit{temporal discrete Laplacian}, whenever the neighboring terms are defined, by
\begin{align*}
(\Delta_{\mathrm{sp}} a)_{k,n}&\coloneqq 2a_{k,n}-a_{k-1,n}-a_{k+1,n},\\
(\Delta_{\mathrm{tm}} a)_{k,n}&\coloneqq 2a_{k,n}-a_{k,n-1}-a_{k,n+1}.
\end{align*}

\begin{proof}[Proof of \cref{thm:arithmetic-degree-growth-upper-bound}]
The lower bound follows from \cref{cor:existence-lambda-hat}.
Fix a maximal affine interval $[r,s]$ of $(\log \widehat{\lambda}_i(f))_{i=0}^{d+1}$.
For $r\leq k\leq s$ and $n\geq0$, set
\[
u_{k,n} \coloneqq \log\frac{\widehat{\deg}_{k,\Hbar}(f^n)}{\widehat{\lambda}_k(f)^n}.
\]
By \cref{cor:existence-lambda-hat} again, we have
\[
\lim_{n\to\infty} \frac{u_{k,n}}{n} = 0.
\]
For any interior index $r<j<s$ (if any), affineness of $[r,s]$ yields $\widehat{\lambda}_j(f)^2 = \widehat{\lambda}_{j-1}(f)\widehat{\lambda}_{j+1}(f)$.
So, by \cref{thm:arithmetic-temporal-estimate}, there is a constant $C_0\coloneqq \max_{r<j<s} C'_{f,\Hbar,j}>0$ such that for every $n\geq1$,
\[
\left|(\Delta_{\mathrm{tm}}u)_{j,n}\right|
\leq C_0 \frac{\sqrt{\widehat{\deg}_{j-1,\Hbar}(f^n)\,\widehat{\deg}_{j+1,\Hbar}(f^n)}}{\widehat{\deg}_{j,\Hbar}(f^n)} = C_0 \exp\left(-\frac{1}{2}(\Delta_{\mathrm{sp}}u)_{j,n}\right).
\]
If $[r,s]\subseteq [1,d]$, then both $r$ and $s$ are strict break indices, so that $u_{r,n}$ and $u_{s,n}$ are bounded by \cref{thm:strict-break-asymptotic}.
When $r=0$ or $s=d+1$, we have $u_{0,n}=u_{d+1,n}=\log(\Hbar^{d+1})$, which is constant.
So, by applying \cite[Lemma~4.3]{Hu-degree-growth}, there is a constant $C'_{u,r,s}>0$ such that for every $r\leq k\leq s$ and $n\geq1$,
\[
u_{k,n}\leq C'_{u,r,s} + 2(k-r)(s-k) \log n.
\]
Thus the upper bound in \eqref{eq:arithmetic-degree-growth-bound} follows with $C_{f,\overline{H},r,s}\coloneqq e^{C'_{u,r,s}}$.
\end{proof}

\section{Proof of Theorem \ref{thm:ht-growth-upper-bound}}\label{sec:proof}

The goal of this section is to prove \cref{thm:ht-growth-upper-bound}.
Thanks to \cite[Theorem~1.1]{Hu-degree-growth} and its arithmetic analog established in \cref{thm:arithmetic-degree-growth-upper-bound}, it suffices to prove a uniform height estimate for the images $f^n(P)$ in terms of the geometric/arithmetic degrees of the maps.
We yet treat the number field case first, i.e., \cref{thm:ht-degree-bound-N}, which is reminiscent of \cite[Proof of Theorem~4.5]{Song26}.
The function field case is relatively easier since the corresponding height functions are defined geometrically; see \cref{thm:ht-growth-bound-F}.
After these preparations, we prove \cref{thm:ht-growth-upper-bound}.

\subsection{Number field case}\label{subsec:number-field}

Throughout this subsection, $X$ is a normal projective variety of dimension $d$ over a number field $K$.
We retain the notations in \S\ref{sec:arithmetic-prelim}.

\begin{theorem}\label{thm:ht-degree-bound-N}
Let $\overline{H}$ be a nef and big adelic line bundle on $X$ with $H$ ample.
Then there exist constants $C_{\overline{H}},C'_{\overline{H},h_H}>0$ such that for any dominant rational self-map $g\colon X\dashrightarrow X$ and any $P\in X_g(\overline{K})$,
\begin{equation}\label{eq:adelic-ht-degree-bound-N}
h_{\overline{H}}^+(g(P))\leq C_{\overline{H}}\,\widehat{\deg}_{1,\overline{H}}(g)h_{\overline{H}}^+(P),
\end{equation}
and
\begin{equation}\label{eq:ample-ht-degree-bound-N}
h_H^+(g(P))\leq C'_{\overline{H},h_H}\widehat{\deg}_{1,\overline{H}}(g)h_H^+(P).
\end{equation}
\end{theorem}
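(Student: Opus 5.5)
Fix $g$ and $P\in X_g(\Kbar)$, and work on the normalized graph $\widetilde\Gamma$ with projections $\pi_1,\pi_2\colon\widetilde\Gamma\to X$. Since $P$ has well-defined image, there is a point $\widetilde P\in\widetilde\Gamma(\Kbar)$ with $\pi_1(\widetilde P)=P$ and $\pi_2(\widetilde P)=g(P)$. Functoriality of adelic heights gives
\[
h_{\Hbar}(g(P))=h_{\pi_2^*\Hbar}(\widetilde P),\qquad h_{\Hbar}(P)=h_{\pi_1^*\Hbar}(\widetilde P).
\]
So the task is to compare the heights of $\pi_2^*\Hbar$ and $\pi_1^*\Hbar$ at $\widetilde P$. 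Two things must be controlled: the constant, which must be linear in $\widehat{\deg}_{1,\Hbar}(g)=\bigl((\pi_1^*\Hbar)^d\cdot\pi_2^*\Hbar\bigr)$, and the exceptional locus, which may contain $\widetilde P$.

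The main step mimics Lemma~\ref{lemma:arithmetic-adjacent-comparison}. Take $t\coloneqq c\,\widehat{\deg}_{1,\Hbar}(g)$ with $c=(d+1)/(\Hbar^{d+1})+1$, and consider $\overline{D}\coloneqq t\pi_1^*\Hbar-\pi_2^*\Hbar$. Yuan's inequality gives
\[
\widehat{\mathrm{vol}}(\overline D)\geq t^d\bigl(t(\Hbar^{d+1})-(d+1)\widehat{\deg}_{1,\Hbar}(g)\bigr)>0,
\]
so $\overline D$ is big. Pick a nonzero small section $s$ of $m\overline D$. Then at every point $Q\notin\mathrm{div}(s)$ we get $h_{\overline D}(Q)\geq 0$, that is,
\[
h_{\Hbar}(g(\pi_1 Q))\leq t\,h_{\Hbar}(\pi_1 Q).
\]
This inequality fails only on the base locus. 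Two issues arise. First, the base locus depends on $g$. Second, $P$ may lie under it, which is exactly why a naive argument degrades into iterate-dependent constants, as in the Kawaguchi--Silverman gap.

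\textbf{Handling the base locus (main obstacle).} Instead of heights of points, I would use the arithmetic intersection with the zero-dimensional cycle $\widetilde P$ in a lower-dimensional form. Let $Z$ be an irreducible component of the fibre $\pi_1^{-1}(P)$ containing $\widetilde P$, or better, handle the situation by induction on dimension via the Siu-type argument of \cite[Proof of Theorem~4.5]{Song26}. Concretely, choose $d$ general sections of a fixed multiple of the ample $H$ through $P$. By Bertini, and because $P\in X_g$ so that $\pi_1$ is finite over a neighbourhood of the resulting curve through $P$, their pullbacks cut out a curve $\widetilde C\subset\widetilde\Gamma$ passing through $\widetilde P$ on which $\pi_1$ is finite. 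The sections can be chosen with norms bounded in terms of $h_{\Hbar}^+(P)$, using an explicit $\Kbar$-point-dependent construction with uniform constants coming from $\Hbar$ alone.

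On $\widetilde C$ I then apply the one-dimensional (arithmetic surface) version of the bigness argument. The height of the point $\widetilde P$ is bounded by the arithmetic degree of $\pi_2^*\Hbar|_{\widetilde C}$, up to an error controlled by the Hodge index inequality (Lemma~\ref{lemma:three-variable-rKT} with the bilinear form of Corollary~\ref{cor:positive_index}). The arithmetic degree of $\pi_2^*\Hbar|_{\widetilde C}$ in turn equals an intersection number of the shape $\bigl(\pi_2^*\Hbar\cdot(\pi_1^*\Hbar)^{d}\bigr)$ perturbed by terms of size $O\bigl(h^+_{\Hbar}(P)\deg_{1,H}(g)\bigr)$. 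These perturbations are bounded using Lemma~\ref{lemma:degree-comparison}. Since all constants come from $\Hbar$, $d$ and Theorem~\ref{thm:rKT}, this yields \eqref{eq:adelic-ht-degree-bound-N}. Making this curve-cutting uniform is the step I expect to be the hardest.

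Finally, \eqref{eq:ample-ht-degree-bound-N} follows from \eqref{eq:adelic-ht-degree-bound-N} because $h_{\Hbar}$ and $h_H$ are both Weil heights for the ample $H$, so $h_{\Hbar}=h_H+O(1)$. Hence $h^+_{\Hbar}$ and $h^+_H$ are comparable up to multiplicative constants depending only on $\Hbar$ and $h_H$.
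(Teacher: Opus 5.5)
There is a genuine gap, located in the base-locus step. That step is the whole content of the theorem, and your plan does not close it.

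Your bigness step is correct but only gives $h_{\Hbar}(g(P))\le t\,h_{\Hbar}(P)$ off the zero set of a small section depending on $g$, and it is not used in the end. In the base-locus step, first a small slip: $d$ sections through $P$ cut out a zero-dimensional set, not a curve, so you mean $d-1$. More importantly, $P\in X_g$ only makes $\pi_1$ an isomorphism over a neighbourhood of $P$, not over a curve through $P$. Choosing the sections ``general'' via Bertini so that $\widetilde C$ misses the exceptional locus of $\pi_1$ is a $g$-dependent genericity condition. You give no way to achieve it with coefficients from a $g$-independent finite set, and that is exactly what uniform norm bounds require.

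The mechanism on $\widetilde C$ also does not work. Rerunning the bigness argument on the curve reproduces the same obstruction: any point with $h_{\overline D}(\widetilde P)<0$ lies in the zero set of every small section of $m\overline D|_{\widetilde C}$. The Hodge index theorem and \cref{lemma:three-variable-rKT} control mixed intersection numbers, not the height of a specific point. ``The arithmetic degree of $\pi_2^*\Hbar|_{\widetilde C}$'' is not a number that bounds $h(\widetilde P)$. What is needed is $(\pi_2^*\Hbar\cdot\overline A)_{\widetilde C}$, for a bundle $\overline A$ carrying a small section that cuts out $\widetilde P$, together with an \emph{inequality} (not an equality) relating it to a top intersection on $\widetilde\Gamma$.

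The missing idea is the arithmetic excess-intersection inequality \cref{lem:nonproper_intersection}. Suppose $V$ is merely an irreducible component of $\mathrm{Bs}(s_1,\dots,s_m)$, where the $s_i$ are small sections of nef $\overline L_i$. Then
\[
(\Hbar_1\cdots\Hbar_{d+1-m}\cdot\overline L_1\cdots\overline L_m)\ge(\Hbar_1\cdots\Hbar_{d+1-m}\cdot V).
\]
This follows by induction from the Chambert-Loir--Thuillier induction formula, since the $-\log\|s_i\|_v$ integrals and all excess components contribute nonnegatively. With this, no genericity in $g$ and no curve are needed.

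The paper proceeds as follows:
\begin{enumerate}[label=\textnormal{(\arabic*)}]
\item It takes $\ell_j=u_j-a_ju_0$, whose common zero set is $\{P\}$.
\item Using \cref{lem:subspace_cover}, it forms $s_1,\dots,s_d$ with coefficients in the fixed set $\{0,\dots,r^d(H^d)\}$, so that $P$ is an isolated component of $\mathrm{Bs}(s_1,\dots,s_d)$. At each step at most $r^d(H^d)$ proper subspaces must be avoided, independently of $g$.
\item It makes the $s_i$ small for $\overline H_2+\pi^*\Ebar$, where $\widehat{\deg}(\Ebar)/[L:\bQ]\le r\,h_{\Hbar}(P)+C_2$ (\cref{claim:trivial-lb-choose}).
\item It pulls back by $\pi_1$, which is an isomorphism near $P$, so $P'$ remains an isolated component of the base locus on $\widetilde\Gamma_L$.
\end{enumerate}
Expanding $\bigl(\pi_2^*\Hbar_L\cdot(\pi_1^*(\overline H_2+\pi^*\Ebar))^d\bigr)/[L:\bQ]$ gives
\[
\frac{r^d\,\widehat{\deg}_{1,\Hbar}(g)}{[K:\bQ]}+\frac{dr^{d-1}\deg_{1,H}(g)\,\widehat{\deg}(\Ebar)}{[L:\bQ]}.
\]
This is the shape you anticipated. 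From there, your use of \cref{lemma:degree-comparison} and of $h_H^+\asymp h_{\Hbar}^+$ finishes the proof as in the paper.
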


The following lemma, which allows us to choose sections with bounded coefficients, is crucial in the proof of \cref{thm:ht-degree-bound-N}.

\begin{lemma}\label{lem:subspace_cover}
Let $H$ be an ample line bundle on $X$.
Let $\ell_1,\ell_2,\dots,\ell_N\in H^0(X,H)$ be global sections, not all zero, and denote their base locus by $B\coloneqq \mathrm{Bs}(\ell_1,\dots,\ell_N)$. Fix a finite set $S\subset K$ with $\# S>(H^d)$ and an integer $1\leq m\leq d$. Then we can choose sections
\[
s_i=\sum_{j=1}^N b_{i,j}\ell_j,
\qquad b_{i,j}\in S,
\qquad 1\leq i\leq m,
\]
such that every irreducible component of $B$ of codimension $m$ in $X$ is an irreducible component of $\mathrm{Bs}(s_1,\dots,s_m)$.
\end{lemma}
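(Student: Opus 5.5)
We choose the sections $s_1,\dots,s_m$ one at a time, keeping track of dimensions. Let $Z_1,\dots,Z_r$ be the irreducible components of $B$ of codimension $m$ in $X$. The claim we propagate by induction on $i$ is:
\begin{quote}
for $0\le i\le m$ there are $s_1,\dots,s_i$ with coefficients in $S$ such that every irreducible component of $\mathrm{Bs}(s_1,\dots,s_i)$ containing some $Z_t$ has codimension exactly $i$.
\end{quote}
At $i=m$ this forces each $Z_t$ to be an irreducible component. Indeed, $Z_t\subseteq B\subseteq \mathrm{Bs}(s_1,\dots,s_m)$, so $Z_t$ lies in some component $W$ of codimension $m$. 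Since $\dim W=\dim Z_t$ and both are irreducible and closed, $W=Z_t$.

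For the inductive step, suppose $s_1,\dots,s_{i-1}$ are chosen with $i\le m$. Let $W_1,\dots,W_q$ be the irreducible components of $\mathrm{Bs}(s_1,\dots,s_{i-1})$, which is $X$ when $i=1$, that contain some $Z_t$. Each $W_j$ has codimension $i-1<m$, so $W_j\supsetneq Z_t$. In particular $W_j\not\subseteq B$, and hence some $\ell_{j'}$ does not vanish identically on $W_j$.

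We want a coefficient vector $b=(b_1,\dots,b_N)\in S^N$ such that $s_i=\sum_j b_j\ell_j$ vanishes identically on no $W_j$. For each $j$, the condition $s_i|_{W_j}\equiv0$ cuts out a proper linear subspace $L_j\subsetneq K^N$. We therefore need $b\in S^N\setminus\bigcup_j L_j$.

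Each $L_j$ lies in a hyperplane $\{\sum c_{j,k}b_k=0\}$ with $c_j\ne0$. By the Schwartz--Zippel / Combinatorial Nullstellensatz bound, a nonzero polynomial of degree $q$ cannot vanish on all of $S^N$ once $\#S>q$. We apply this to the product $\prod_j\bigl(\sum_k c_{j,k}b_k\bigr)$. This reduces the step to checking $q\le (H^d)$.

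To get that bound, write $D_1,\dots,D_{i-1}$ for the divisors of $s_1,\dots,s_{i-1}$, all in $|H|$. The components $W_j$ of codimension $i-1$ are components of the proper intersection $D_1\cap\dots\cap D_{i-1}$ along them. This intersection is proper near $W_j$, but possibly excess elsewhere. By refined Bézout / Fulton's intersection theory, applied to distinguished varieties of $D_1\cdots D_{i-1}$, one gets
\[
\sum_j \deg_H W_j \le (H^d).
\]
Here excess components contribute nonnegatively because $H$ is ample. Since each $\deg_H W_j\ge1$, this gives $q\le (H^d)<\#S$.

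Given the choice, $s_i$ vanishes identically on no $W_j$, so $\mathrm{Bs}(s_1,\dots,s_i)\cap W_j=W_j\cap \mathrm{div}(s_i)$ has pure codimension $i$ in $X$. Every component containing a $Z_t$ is a component of some $W_j\cap\mathrm{div}(s_i)$ with $W_j\supseteq Z_t$, because any component containing $Z_t$ lies in such a $W_j$. The induction therefore closes.

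The main obstacle is the degree bound $\sum_j\deg W_j\le (H^d)$ when the earlier intersections are not proper everywhere. I would invoke Fulton's refined Bézout theorem (Example 12.3.1 in Fulton's book), which applies on the normal, possibly singular, $X$ after embedding by a multiple of $H$. Alternatively, one can count only components through the $Z_t$, which are proper there.
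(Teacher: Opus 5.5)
Your overall scheme is sound and runs parallel to the paper's. The final deduction at $i=m$ is fine, as are the observation that $W_j\supsetneq Z_t$ forces $W_j\not\subseteq B$, the purity of $W_j\cap\mathrm{div}(s_i)$, and the count in $S^N$ (your Schwartz--Zippel bound is the paper's estimate $\#(S^N\cap W_k)\leq(\#S)^{N-1}$). The gap is the one step that actually uses $\#S>(H^d)$, namely $q\leq(H^d)$: the route you propose does not give this bound.

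If you embed $X\hookrightarrow\mathbf{P}^M$ by $H^{\otimes e}$, then $\mathrm{div}(s_k^e)$ is a hyperplane section, and refined B\'ezout gives $\sum_j\deg_{\mathbf{P}^M}W_j\leq\deg X=e^d(H^d)$. Since $\dim W_j=d-i+1$, one has $\deg_{\mathbf{P}^M}W_j=e^{d-i+1}\deg_H W_j$. So you only get $q\leq e^{i-1}(H^d)$, which is too weak once $e\geq2$ and $i\geq2$; a set-theoretic count cannot see that $s_k^e$ vanishes to order $e$.

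Likewise, ``excess components contribute nonnegatively because $H$ is ample'' is not what the standard positivity result gives. Fulton's Theorem~12.2(a) assumes the normal bundle, here $H^{\oplus(i-1)}$, is generated by sections. For merely ample $H$ you would need an extra homogeneity step: replace $s_k$ by $s_k^e$ and use multilinearity of $D_1\cdots D_{i-1}\cdot[X]$ and of the multiplicities of proper components. For very ample $H$ (take $e=1$) your citation does work as written. That is the case in which the paper applies the lemma, but the lemma is stated for ample $H$.

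The repair needs no excess intersection theory, and it is already contained in your inductive step. Add $\sum_j\deg_H W_j\leq(H^d)$ to the inductive hypothesis. You showed that every component of $\mathrm{Bs}(s_1,\dots,s_i)$ containing some $Z_t$ is a component of some $W_j\cap\mathrm{div}(s_i)$ with $s_i|_{W_j}\not\equiv0$. Such a component occurs with positive multiplicity in the divisor of $s_i|_{W_j}$, whose $H$-degree is $(H^{d-i+1}\cdot W_j)=\deg_H W_j$. Hence the new components have total $H$-degree at most $\sum_j\deg_H W_j\leq(H^d)$. Since $H$ is ample, each $\deg_H W_j\geq1$, so $q\leq(H^d)$ at every stage.

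This is how the paper argues. It discards the components of $B$ of dimension $\geq d-m+1$ to get an open set $U$ and tracks $Y_i=\overline{U\cap\mathrm{Bs}(s_1,\dots,s_i)}$. This set is pure of dimension $d-i$ because $\dim(B\cap U)\leq d-m$. The paper then gets $\deg_H Y_i\leq(H^{d-i+1}\cdot Y_{i-1})=\deg_H Y_{i-1}\leq(H^d)$ from a proper intersection at each step.

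Your closing remark about counting only components through the $Z_t$ points in this direction. However, properness near the $Z_t$ gives a degree bound only once the bound is carried through the induction in this way.
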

\begin{proof}
If $B$ has no irreducible component of codimension $m$, there is nothing to prove. Otherwise, let $U$ be the complement in $X$ of the union of all irreducible components of $B$ of dimension at least $d-m+1$. Then $\dim B\cap U\leq d-m$ and every irreducible component of $B$ of codimension $m$ meets $U$.

We construct $s_1,\dots,s_m$ inductively such that
\[
Y_i\coloneqq \overline{U\cap\mathrm{Bs}(s_1,\dots,s_i)}^{\mathrm{Zar}}\subseteq X
\]
is pure of dimension $d-i$ and $\deg_H Y_i\leq (H^d)$. Since $Y_0=X$, the assertion holds for $i=0$. Suppose that $s_1,\dots,s_{i-1}$ have been chosen, and let $Z_1,\dots,Z_t$ be the irreducible components of $Y_{i-1}$. Since $H$ is ample,
\[
t\leq \sum_{k=1}^t\deg_H(Z_k)\leq \deg_H Y_{i-1}\leq (H^d).
\]
Moreover,
\[
\dim Z_k=d-i+1>d-m\geq\dim(B\cap U),
\]
so $Z_k$ is not contained in $B$ for all $k$. Thus
\[
W_k\coloneqq 
\left\{
(b_1,\dots,b_N)\in K^N:
Z_k\subseteq \mathrm{div}\left(\sum_{j=1}^N b_j\ell_j\right)
\right\}
\]
is a proper linear subspace of $K^N$. Since $\# S>(H^d)\geq t$, we have
\[\#\left(S^N\cap  \bigcup\limits_{k=1}^tW_k\right)\leq \sum_{k=1}^t \#\big(S^N\cap W_k\big)\leq t (\# S)^{N-1}<(\# S)^N,\]
which implies $S^N\not\subseteq \bigcup\limits_{k=1}^tW_k$. Thus we may choose $(b_{i,1},\dots,b_{i,N})\in S^N\setminus \bigcup\limits_{k=1}^tW_k$ and set
\[
s_i=\sum_{j=1}^N b_{i,j}\ell_j.
\]
By construction, $Z_k\not\subseteq \mathrm{div}(s_i)$ for all $1\leq k\leq t$. It follows that $Y_i$ is nonempty and pure of dimension $d-i$. Moreover, 
\[
\deg_H Y_i\leq\bigl(H^{d-i}\cdot H\cdot Y_{i-1}\bigr)= \deg_H Y_{i-1} \leq (H^d).
\]
This completes the induction.

Now let $V$ be an irreducible component of $B$ of codimension $m$. Since all $s_i$ vanish on $B$, we have $V\subseteq Y_m$. Note that $Y_m$ is pure of dimension $d-m$, then $V$ is an irreducible component of $\mathrm{Bs}(s_1,\dots,s_m)$.
\end{proof}

We now prove an adelic version of \cite[Lemma~4.4]{Song26}. A geometric version of this lemma appears in \cite[Lemma~3.3]{JSXZ24}.
\begin{lemma}\label{lem:nonproper_intersection}
Let $V\subset X$ be an irreducible subvariety of codimension $m$.
Let $\overline{L}_1,\dots,\overline{L}_m$ be nef adelic line bundles on $X$, and let
$s_i\in\widehat{H}^0(X,\overline{L}_i)$
be nonzero small sections for $1\leq i\leq m$. Suppose that $V$ is an irreducible component of
$\mathrm{Bs}(s_1,\dots,s_m)$. Then, for any nef adelic line bundles
$\overline{H}_1,\dots,\overline{H}_{d+1-m}$ on $X$, we have
\[
\bigl(\overline{H}_1\cdots\overline{H}_{d+1-m}\cdot\overline{L}_1\cdots\overline{L}_m\bigr)\geq
\bigl(\overline{H}_1\cdots\overline{H}_{d+1-m}\cdot V\bigr).
\]
\end{lemma}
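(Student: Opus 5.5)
Here is the plan: induct on $m$ by cutting with one small section at a time, using the arithmetic intersection formula for $\operatorname{div}(s)$ together with the fact that smallness makes the Green's function contribution nonnegative. For a nonzero small section $s\in\widehat H^0(X,\overline L)$ and nef adelic line bundles $\overline M_1,\dots,\overline M_k$ on a subvariety $W$ of dimension $k$ with $W\not\subseteq\mathrm{div}(s)$, the standard formula (see \cite[\S A.5]{YZ26}) reads
\[
\bigl(\overline M_1\cdots\overline M_k\cdot\overline L\bigr)_W=\bigl(\overline M_1\cdots\overline M_k\bigr)_{\mathrm{div}(s|_W)}-\sum_{v\in M_K}\int_{W_v^{\mathrm{an}}}\log\|s\|_v\,c_1(\overline M_1)\cdots c_1(\overline M_k).
\]
Since $\|s\|_{v,\sup}\leq1$ and the Chambert-Loir measures are positive, the integral term is nonnegative. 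Hence
\[
\bigl(\overline M_1\cdots\overline M_k\cdot\overline L\bigr)_W\geq\bigl(\overline M_1\cdots\overline M_k\bigr)_{\mathrm{div}(s|_W)},
\]
where $\mathrm{div}(s|_W)$ is an effective cycle.

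The induction is set up as follows. Put $W_0=X$. Following Fulton's construction (as in \cite[Lemma~3.3]{JSXZ24}), we would define effective cycles $W_i$ of pure codimension $i$ in $X$ containing $V$ with positive multiplicity. From $W_{i-1}$ we pass to $W_i$ by intersecting with $\mathrm{div}(s_i)$ only on those components of $W_{i-1}$ not contained in $\mathrm{div}(s_i)$, and we simply discard components contained in $\mathrm{div}(s_i)$. This is exactly the delicate point, so the plan is to handle it via nefness. For a discarded component $Z$, nonnegativity of nef intersections (\cite[Lemma~5.2.13(b)]{YZ26}) gives
\[
\bigl(\overline H_1\cdots\overline H_{d+1-m}\cdot\overline L_{i}\cdots\overline L_m\bigr)_Z\geq0.
\]
So dropping $Z$ only decreases the right-hand side, and we get
\[
\bigl(\overline H_1\cdots\overline H_{d+1-m}\cdot\overline L_i\cdots\overline L_m\bigr)_{W_{i-1}}\geq\bigl(\overline H_1\cdots\overline H_{d+1-m}\cdot\overline L_{i+1}\cdots\overline L_m\bigr)_{W_i}.
\]
Chaining these inequalities yields $\bigl(\overline H_1\cdots\overline H_{d+1-m}\cdot\overline L_1\cdots\overline L_m\bigr)\geq\bigl(\overline H_1\cdots\overline H_{d+1-m}\bigr)_{W_m}$.

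The main obstacle is ensuring that $V$ actually survives in $W_m$ with multiplicity at least one, even though some components get discarded. The key is a dimension argument. Every component of $W_i$ is contained in $\mathrm{Bs}(s_1,\dots,s_i)$ and has codimension exactly $i$. We would track the components of $W_i$ that contain $V$. At each step $i<m$, such a component $Z\supseteq V$ has codimension $i<m$. If $Z$ were contained in $\mathrm{div}(s_{i+1}),\dots$, it would lie in the base locus; to control this, we would refine the construction, choosing at step $i$ a component through $V$ that is not contained in $\mathrm{div}(s_{i+1})$. Such a component exists because otherwise an irreducible set of codimension $<m$ through $V$ would lie in $\mathrm{Bs}(s_1,\dots,s_m)$, contradicting that $V$ is an irreducible component of that base locus. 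Concretely, I would first prove by induction that $V$ is contained in some component $Z_i$ of $W_i$ not lying in $\mathrm{Bs}(s_1,\dots,s_m)$ for $i<m$. Then I would note that $V$ appears in $\mathrm{div}(s_m|_{Z_{m-1}})$ with positive multiplicity, since $V\subseteq Z_{m-1}\cap\mathrm{div}(s_m)$ and both have the right dimension. Since all other contributions to $W_m$ are effective and the $\overline H_j$ are nef, we get
\[
\bigl(\overline H_1\cdots\overline H_{d+1-m}\bigr)_{W_m}\geq\bigl(\overline H_1\cdots\overline H_{d+1-m}\cdot V\bigr),
\]
which completes the argument.
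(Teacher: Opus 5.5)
Your proposal is the same argument as the paper's: cut by one small section at a time using the induction formula, use smallness to make the Green's-function term nonnegative, and use nefness to discard unwanted components. The only difference is bookkeeping. You carry the whole refined cycle $W_i$ forward, whereas the paper inducts on $m$ and tracks a single component $W$ of $\mathrm{Bs}(s_1,\dots,s_{m-1})$.

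The step that does not go through as written is the survival of $V$. Your inductive invariant ``$Z_i\not\subseteq\mathrm{Bs}(s_1,\dots,s_m)$'' does not exclude $Z_i\subseteq\mathrm{div}(s_{i+1})$: for instance, $Z_i$ could lie in $\mathrm{div}(s_{i+1})$ but not in $\mathrm{div}(s_{i+2})$. So your ``otherwise'' sentence is true but unjustified, and what it needs is Krull's principal ideal theorem. Suppose an irreducible $Z\supseteq V$ of codimension $i<m$ with $Z\subseteq\mathrm{Bs}(s_1,\dots,s_i)$ were contained in $\mathrm{div}(s_{i+1})$. Intersect successively with $\mathrm{div}(s_{i+2}),\dots,\mathrm{div}(s_m)$, keeping at each stage a component through $V$; each step lowers the dimension by at most one. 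This yields an irreducible $T$ with $V\subsetneq T\subseteq\mathrm{Bs}(s_1,\dots,s_m)$ and $\dim T\geq (d-i)-(m-i-1)=\dim V+1$, contradicting that $V$ is an irreducible component of the base locus.

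Consequently every component of $W_i$ through $V$ is automatically kept at the next step, and no refined choice is needed. By induction $V$ lies in a component of $W_{m-1}$, hence occurs in $W_m$ with positive multiplicity, and your chain of inequalities then finishes the proof. This is the same dimension count the paper uses, implicitly, to produce its component $W$ with $W\not\subseteq\mathrm{div}(s_m)$.
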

\begin{proof}
We prove by induction on $m$. Suppose first that $m=1$.
By the induction formula in \cite[Théorème~4.1]{CLT09}
(see also \cite[Appendix, Arithmetic intersections]{YZ17}), we have
\[
\begin{aligned}
\big(\overline{H}_1\cdots\overline{H}_d\cdot \overline{L}_1\big)&= \big(\overline{H}_1\cdots\overline{H}_d\cdot \mathrm{div}(s_1)\big)-\sum_{v\in M_K}\int_{X_v^{\mathrm{an}}}\log\|s_1\|_vc_1(\overline{H}_1)_v\cdots c_1(\overline{H}_d)_v\\
&\geq \big(\overline{H}_1\cdots \overline{H}_{d}\cdot V\big).        
\end{aligned}
\]
The last inequality holds because $s_1$ is small, each $\overline{H}_i$ is nef, and $\mathrm{div}(s_1)$ contains $V$ with multiplicity at least one.

Now assume that the result holds for $m-1$ sections.
Since $V$ has codimension $m$, after relabeling,  there is an irreducible component
$W$ of $\mathrm{Bs}(s_1,\dots,s_{m-1})$
of codimension $m-1$ such that $W\not\subseteq\mathrm{div}(s_m)$ and $V$ is an irreducible component of $W\cap\mathrm{div}(s_m)$. Applying the induction hypothesis with the nef line bundles
$\overline{H}_1,\dots,\overline{H}_{d+1-m},\overline{L}_m$
and then using the induction formula, we obtain
\begin{align*}
&\,\big(\overline{H}_1\cdots\overline{H}_{d+1-m}\cdot\overline{L}_1\cdots\overline{L}_m\big) \geq \big(\overline{H}_1\cdots\overline{H}_{d+1-m}\cdot\overline{L}_m\cdot W\big)\\
=&\,\big(\overline{H}_1\cdots\overline{H}_{d+1-m}\cdot W\cdot \mathrm{div}(s_m)\big)-\sum_{v\in M_K}\int_{X_v^{\mathrm{an}}}\log\|s_m\|_vc_1(\overline{H}_1)_v\cdots c_1(\overline{H}_{d+1-m})_v\delta_{W_v}\\
\geq &\,\big(\overline{H}_1\cdots\overline{H}_{d+1-m}\cdot V\big).\qedhere
\end{align*}
\end{proof}

\begin{proof}[Proof of \cref{thm:ht-degree-bound-N}]
Fix an integer $r\geq 1$ such that $H_1\coloneqq H^{\otimes r}$ is very
ample, and set $\overline H_1\coloneqq \overline H^{\otimes r}$. Choose an embedding
$\iota\colon X\hookrightarrow\mathbb P_K^N$ with $\iota^*\mathcal O_{\mathbb P^N}(1)=H_1$. Let
$u_j\coloneqq \iota^*x_j\in H^0(X,H_1)$ for $0\leq j\leq N$, where $x_0,\ldots,x_N$
are the homogeneous coordinates on $\mathbb P_K^N$.

Now fix $P\in X(\overline K)\setminus I(g)$ and set $L=K(P)$.
We view $P$ as an $L$-rational point of $X_L$. Let $H_L$ and
$\overline H_L$ denote the pullbacks of $H$ and $\overline H$
to $X_L$, respectively, and set $H_2\coloneqq H_L^{\otimes r}, \overline H_2\coloneqq \overline H_L^{\otimes r}$. Write
\[
\iota(P)=[a_0:a_1:\cdots:a_N],
\qquad a_j\in L.
\]
Without loss of generality, we may assume that $a_0=1$. By abuse of notation, we also write $u_j$ for the pullback of $u_j$ to $X_L$. Set $\ell_i\coloneqq u_i-a_i u_0\in H^0(X_L,H_2)$ for $1\leq i\leq N$. Then $\mathrm{Bs}(\ell_1,\ldots,\ell_N)=\{P\}$.
By \cref{lem:subspace_cover}, we can choose sections
\[
s_i=\sum_{j=1}^N b_{i,j}\ell_j,
\qquad
b_{i,j}\in\{0,1,\ldots,C_1\},
\qquad 1\leq i\leq d,
\]
such that $P$ is a zero-dimensional irreducible component of
$\mathrm{Bs}(s_1,\ldots,s_d)$, where 
\[C_1=(H_2^d)_{X_L}=(H_1^d)_X=r^d(H^d)_X\in\bZ_{>0}.\]

\begin{claim}\label{claim:trivial-lb-choose}
There exist an auxiliary nef adelic line bundle $\overline E\in\widehat{\mathrm{Pic}}(L)$, and a constant $C_2\geq 0$ depending only on $X$ and $\overline{H}$, such that
\[
s_i\in
\widehat H^0
\bigl(X_L,\overline H_2+\pi^*\overline E\bigr),
\qquad 1\leq i\leq d,
\]
and
\[
\frac{\widehat{\deg}(\overline E)}{[L:\bQ]}
\leq 
r h_{\overline H}(P)+C_2
\leq
(C_2+r)h_{\overline H}^+(P),
\]
where $\pi:X_L\to\operatorname{Spec}L$ is the structural morphism.    
\end{claim}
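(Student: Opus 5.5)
The idea is to bound the sup-norms of each $s_i$ with respect to $\overline H_2$ at every place $v$ of $L$ by a quantity controlled by $\max_j|a_j|_v$. We then absorb these bounds into an adelic metric on the trivial bundle over $\operatorname{Spec}L$.

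\emph{Step 1: sup-norms of the $u_j$.} First I fix constants attached to $X$ and $\overline H$ only. Since the $u_j$ are finitely many global sections of $H_1$ defined over $K$ and $\overline H_1$ is an adelic line bundle, the coherence condition gives $\|u_j\|_{w,\sup}\leq 1$ for all but finitely many places $w\in M_K$. At the remaining finitely many places, continuity and compactness of $X_w^{\mathrm{an}}$ give $\|u_j\|_{w,\sup}\leq e^{c_w}$. For a place $v$ of $L$ over $w$, the normalization $|\cdot|_v=|\cdot|_p^{[L_v:\bQ_p]}$ yields, after base change, $\|u_j\|_{v,\sup}\leq e^{c_w[L_v:K_w]}$.

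\emph{Step 2: sup-norm of $s_i$.} We have
\[
s_i=\sum_j b_{i,j}u_j-\Bigl(\sum_j b_{i,j}a_j\Bigr)u_0 .
\]
At non-archimedean $v$ the ultrametric inequality applies, and the integer coefficients $b_{i,j}$ have $|b_{i,j}|_v\leq1$. So
\[
\|s_i\|_{v,\sup}\leq e^{c_w[L_v:K_w]}\max\{1,\max_j|a_j|_v\}.
\]
At archimedean $v$ the triangle inequality costs a factor $\bigl((N+1)(NC_1+1)\bigr)^{[L_v:\bR]}$. Here $C_1=r^d(H^d)$, so this factor depends only on $X$, $H$ and $r$.

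\emph{Step 3: define $\overline E$.} Let $\overline E=(\cO_{\operatorname{Spec}L},(\|\cdot\|_v))$ with $\|1\|_v=\gamma_v^{-1}$, where $\gamma_v$ is the product of the bounds in Step 2. Only finitely many $\gamma_v$ differ from $1$, so $\overline E$ is an adelic line bundle. Since $\log\gamma_v\geq0$, its degree is nonnegative, and on $\operatorname{Spec}L$ this makes it nef. Then $\|s_i\otimes1\|_v=\|s_i\|_v\gamma_v^{-1}\leq1$, so $s_i\in\widehat H^0(X_L,\overline H_2+\pi^*\overline E)$. Also
\[
\widehat{\deg}(\overline E)=\sum_v\log\gamma_v .
\]

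\emph{Step 4: bound the degree by the height.} The term $\sum_v\log\max\{1,\max_j|a_j|_v\}$ equals $[L:\bQ]\,h_{\mathrm{naive}}(\iota(P))$, the naive height on $\bP^N$ normalized with $a_0=1$. The remaining terms sum to at most
\[
[L:\bQ]\Bigl(\sum_w c_w/[K:\bQ]+\log\bigl((N+1)(NC_1+1)\bigr)\Bigr).
\]
This uses $\sum_{v\mid w}[L_v:K_w]=[L:K]$. Dividing by $[L:\bQ]$ then bounds $\widehat{\deg}(\overline E)/[L:\bQ]$ by $h_{\mathrm{naive}}(\iota(P))+C$.

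This is the main obstacle: we must compare $h_{\mathrm{naive}}\circ\iota$ with $r\,h_{\overline H}$. Both are Weil heights for $H_1=H^{\otimes r}$. The naive height comes from the pullback of the standard adelic $\cO(1)$, and $h_{\overline H_1}=r\,h_{\overline H}$. Two adelic metrics on the same line bundle differ by a continuous function that vanishes at all but finitely many places, hence is bounded. Therefore $|h_{\mathrm{naive}}\circ\iota-r\,h_{\overline H}|\leq C'$ uniformly in $P$, which gives the first inequality with $C_2=C+C'$.

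The second inequality follows from $h_{\overline H}\leq h^+_{\overline H}$ and $1\leq h^+_{\overline H}$.
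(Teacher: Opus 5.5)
Your proposal is correct and follows the paper's argument. Both bound $\|s_i\|_{\sup}$ place by place using uniform bounds on the $\|u_j\|_{\sup}$, absorb these bounds into a metric on the trivial bundle $\overline E$ over $\operatorname{Spec}L$ with $\|1\|\le 1$ at every place, and sum to bound $\widehat{\deg}(\overline E)$. The only difference is in the last step: the paper uses the local inequality $|a_j|_w\|u_0(P)\|_{\overline H_2,w}\le U_w$ and normalizes $\|1\|_{\overline E,w}$ by $\|u_0(P)\|_{\overline H_2,w}$, so that $h_{\overline H_2}(P)=r\,h_{\overline H}(P)$ appears directly, whereas you obtain the naive height and then quote the standard fact that $h_{\mathrm{naive}}\circ\iota$ and $r\,h_{\overline H}$ differ by $O(1)$.
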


\medskip
\noindent\textit{Proof of \cref{claim:trivial-lb-choose}.}
Let $w$ be a place of $L$ above $v\in M_K$. Define
\[
U_v\coloneqq 
\max_{0\leq j\leq N}
\|u_j\|_{\overline H_1,v,\mathrm{sup}}>0,\quad \text{and}\quad U_w\coloneqq 
\max_{0\leq j\leq N}
\|u_j\|_{\overline H_2,w,\mathrm{sup}}>0.
\]
Then $U_v\leq 1$ for all but finitely many $v$ and $U_w\leq 1$ for all but finitely many $w$. By the definition
of the pullback metrics, for $0\leq j\leq N$, we have
\[
U_w=\max_{0\leq j\leq N}\|u_j\|_{\overline H_2,w,\mathrm{sup}}
\leq
\max_{0\leq j\leq N}\|u_j\|_{\overline H_1,v,\mathrm{sup}}^{[L_w:K_v]}
= U_v^{[L_w:K_v]}.
\]
Since $u_j(P)=a_j u_0(P)$, it follows that
\[
|a_j|_w\,\|u_0(P)\|_{\overline H_2,w}
=\|u_j(P)\|_{\overline H_2,w}
\leq U_w.
\]

If $w$ is a finite place, we have
\[
\|\ell_j\|_{\overline H_2,w,\mathrm{sup}}\,
 \|u_0(P)\|_{\overline H_2,w}
\leq
\max\bigl\{
\|u_j\|_{\overline H_2,w,\mathrm{sup}},
|a_j|_w\|u_0\|_{\overline H_2,w,\mathrm{sup}}
\bigr\}
\|u_0(P)\|_{\overline H_2,w}
\leq U_w^2.
\]
Since $|b_{i,j}|_w\leq 1$, we obtain
\[
\|s_i\|_{\overline H_2,w,\mathrm{sup}}
\leq
\max_{1\leq j\leq N}
\|\ell_j\|_{\overline H_2,w,\mathrm{sup}}
\leq
\frac{U_w^{2}}{\|u_0(P)\|_{\overline H_2,w}}.
\]
If $w$ is an infinite place, we have
\[\|\ell_j\|_{\overline H_2,w,\mathrm{sup}}\,
 \|u_0(P)\|_{\overline H_2,w}
\leq
2\bigl(
\|u_j\|_{\overline H_2,w,\mathrm{sup}}+|a_j|_w\|u_0\|_{\overline H_2,w,\mathrm{sup}}\bigr)\|u_0(P)\|_{\overline H_2,w}
\leq 4U_w^{2}.\]
Since $|b_{i,j}|_w\leq C_1^2$, we obtain
\[
\|s_i\|_{\overline H_2,w,\mathrm{sup}}\leq N\sum_{j=1}^N|b_{i,j}|_w
\|\ell_j\|_{\overline H_2,w,\mathrm{sup}}\leq\frac{4N^2C_1^2U_w^2}{\|u_0(P)\|_{\overline H_2,w}}.
\]
Define
\[
 c_w\coloneqq \begin{cases}
\max\{1,U_w^2\},&w\in M_{L,f},\\
\max\{1,4N^2C_1^2U_w^2\},&w\in M_{L,\infty},
\end{cases}\quad
c_v\coloneqq \begin{cases}
\max\{1,U_v^2\},&v\in M_{K,f},\\
\max\{1,4N^2C_1^2U_v^2\},&v\in M_{K,\infty}.
\end{cases}
\]
Then $\log c_w\leq [L_w:K_v]\log c_v$ for all $w\mid v$. Define 
\[C_2\coloneqq \dfrac{1}{[K:\bQ]}\sum\limits_{v\in M_K} \log c_v.\] Since $c_v=1$ for all but finitely many $v$, the constant $C_2\geq 0$ is finite.

Define $\overline E$ to be the trivial line bundle on
$\operatorname{Spec}L$ equipped with the metrics
\[
\|1\|_{\overline E,w}\coloneqq \frac{\|u_0(P)\|_{\overline H_2,w}}{c_w}.
\]
 Then $\|1\|_{\overline E,w}\leq 1$ for all $w$ and the equality holds for all but finitely many $w$. In particular, $\overline E$ is nef. By the construction,
\[
\|s_i\|_{\overline H_2+\pi^*\overline E,w,\mathrm{sup}}=
\|s_i\|_{\overline H_2,w,\mathrm{sup}}\,
\frac{\|u_0(P)\|_{\overline H_2,w}}{c_w}\leq 1.
\]
Thus each $s_i$ is a small section of $\overline H_2+\pi^*\overline E$ for all $i$.

We compute
\[
\begin{aligned}
\frac{\widehat{\deg}(\overline E)}{[L:\bQ]}
&=
\frac{1}{[L:\bQ]}\sum_{v\in M_K}\sum_{w\mid v}\left(
\log c_w-\log\|u_0(P)\|_{\overline H_2,w}
\right)\\
&\leq 
\sum_{v\in M_K}\sum_{w\mid v}
\frac{[L_w:K_v]}{[L:\bQ]}\log c_v+h_{\overline H_2}(P)
\\
&=
C_2+r h_{\overline H}(P)\\
&\leq
(C_2+r)h_{\overline H}^+(P).
\end{aligned}
\]
This proves the claim.\bqed

We now return to the proof of \cref{thm:ht-degree-bound-N}.
Let $\Gamma\subset X\times_K X$ be the closure of the graph of
$g$, let $\widetilde\Gamma$ be its normalization, and denote
the two projections by $\pi_1,\pi_2\colon \widetilde\Gamma\longrightarrow X$. Write $\pi_{1,L},\pi_{2,L}\colon \widetilde\Gamma_L\to X_L$ for their base changes to $L$. Then we have 
\[
\widehat{\deg}_{1,\overline H_L}(g_L)=[L:K]\,\widehat{\deg}_{1,\overline H}(g),\quad \deg_{1,H_L}(g_L)=\deg_{1,H}(g).
\]
Since $P\notin I(g)$, $\pi_{1}$ is an
isomorphism over a neighborhood of $P$. Thus
$P'\coloneqq \pi_{1}^{-1}(P)$ is a point of
$\widetilde\Gamma(L)$ satisfying $\pi_{2}(P')=g(P)$.
Moreover, viewing $P'$ as an $L$-rational point of $\widetilde{\Gamma}_L$, it is a zero-dimensional irreducible component of the base locus $\mathrm{Bs}(\pi_{1,L}^*s_1,\dots,\pi_{1,L}^*s_d)$.
By the definition \eqref{eq:adelic-Weil-ht} of $h_{\overline H}$ and applying \cref{lem:nonproper_intersection} with $\Lbar_i=\pi_{1,L}^*(\overline H_2+\pi^*\overline E)$ for $1\leq i\leq d$, we obtain
\begin{align*}
h_{\overline H}(g(P))&=
\frac{\bigl(\pi_{2,L}^*\overline H_L\cdot P'\bigr)_{\widetilde\Gamma_L}}{[L:\bQ]}
\leq 
\frac{
\bigl(
\pi_{2,L}^*\overline H_L
\cdot
\bigl(\pi_{1,L}^*(\overline H_2+\pi^*\overline E)\bigr)^d
\bigr)_{\widetilde\Gamma_L}
}{[L:\bQ]}\\
&=
\frac{r^d}{[L:\bQ]}
\bigl(
\pi_{2,L}^*\overline H_L
\cdot(\pi_{1,L}^*\overline H_L)^d
\bigr)_{\widetilde\Gamma_L}+
\frac{dr^{d-1}}{[L:\bQ]}
\bigl(
\pi_{2,L}^*\overline H_L
\cdot(\pi_{1,L}^*\overline H_L)^{d-1}
\cdot\pi_{1,L}^*\pi^*\overline E
\bigr)_{\widetilde\Gamma_L}\\
&= \frac{r^d}{[K:\bQ]}\,\widehat{\deg}_{1,\overline H}(g)+\frac{dr^{d-1}\,\widehat{\deg}(\overline E)}{[L:\bQ]}\deg_{1,H}(g)\\
&\leq \frac{r^d}{[K:\bQ]}\,\widehat{\deg}_{1,\overline H}(g) h_{\overline H}^+(P) + dr^{d-1} (C_2+r) \deg_{1,H}(g) h_{\overline H}^+(P),
\end{align*}
where the last inequality follows from \cref{claim:trivial-lb-choose} and the convention $h_{\overline H}^+=\max\{1,h_{\overline H}\}$.
Recall that by \cref{lemma:degree-comparison}, there exists a constant $C_3>0$, depending only on $X$ and $\overline H$, such that
\[
\widehat{\deg}_{1,\overline H}(g) \geq C_3 \deg_{1,H}(g).
\]
Combining the last two displayed inequalities together yields that
\[
h_{\overline H}(g(P)) \leq C_{\overline{H}}\,\widehat{\deg}_{1,\overline H}(g)h_{\overline H}^+(P),
\]
where
\[
C_{\overline{H}}\coloneqq \frac{r^d}{[K:\bQ]}+\frac{dr^{d-1}(C_2+r)}{C_3}
\]
depends only on $X$ and $\overline H$.
Finally, since $\deg_{1,H}(g)\geq 1$, we have
\[
C_{\overline{H}}\,\widehat{\deg}_{1,\overline H}(g)h_{\overline H}^+(P)
\geq C_{\overline{H}}\,C_3\deg_{1,H}(g)
\geq dr^{d-1}(C_2+r)\geq 1.
\]
Hence
\[
h_{\overline H}^+(g(P))
\leq
C_{\overline{H}}\,\widehat{\deg}_{1,\overline H}(g)h_{\overline H}^+(P),
\]
which proves the uniform height estimate \eqref{eq:adelic-ht-degree-bound-N} in \cref{thm:ht-degree-bound-N}.
The inequality~\eqref{eq:ample-ht-degree-bound-N} follows from the fact that
$h_H^+ \asymp h_{\Hbar}^+$,
where the implied constant depends only on $X$, $h_H$, and $\overline{H}$.
\end{proof}

\subsection{Function field case}\label{subsec:function-field}

Throughout this subsection, let $B$ be a smooth projective curve over an algebraically closed field $\bk$ of arbitrary characteristic, and let $K\coloneqq\bk(B)$ be its function field. Let $X$ be a normal projective variety of dimension $d$ over $K$, and let $H$ be an ample line bundle on $X$.
We aim to prove a function-field analog of \cref{thm:ht-degree-bound-N}.

Choose a positive integer $e$ and a model
$(\mathcal{X},\mathcal{H})$ of $(X,H^{\otimes e})$ over $B$. Here, $\mathcal{X}$ is normal and projective over $B$, with generic
fiber $\mathcal{X}_K\simeq X$, and $\mathcal{H}$ is an ample line
bundle on $\mathcal{X}$ satisfying
$\mathcal{H}_K\simeq H^{\otimes e}$.
Such a model exists for sufficiently large $e$.

For each $P\in X(\overline{K})$, let
\[
    V_P\coloneqq \overline{\{P\}}^{\mathrm{Zar}}\subset\mathcal{X}
\]
be the corresponding multisection, and define
\[
    h_{\mathcal{H}}(P)
    \coloneqq \frac{\mathcal{H}\cdot V_P}{[K(P):K]}.
\]
Then $e^{-1}h_{\mathcal{H}}$ is a Weil height associated with $H$.
Every dominant rational self-map $g\colon X\dashrightarrow X$ defined
over $K$ extends to a dominant rational self-map
$G\colon \mathcal{X}\dashrightarrow\mathcal{X}$ over $B$.

\begin{theorem}\label{thm:ht-growth-bound-F}
With notations as above, there exist constants $C_{\mathcal{X},\mathcal{H}},C'_{\mathcal{X},\mathcal{H},h_H}\geq1$ such that for any dominant rational self-map $g\colon X\dashrightarrow X$ and any $P\in X_g(\overline{K})$,
\begin{equation}\label{eq:model-ht-degree-bound-F}
h_{\mathcal{H}}(g(P))\leq C_{\mathcal{X},\mathcal{H}}\deg_{1,\mathcal{H}}(G)h_{\mathcal{H}}(P),
\end{equation}
and
\begin{equation}\label{eq:ample-ht-degree-bound-F}
h_H^+(g(P))\leq C'_{\cX,\cH,h_H}\deg_{1,\cH}(G)h_H^+(P).
\end{equation}
\end{theorem}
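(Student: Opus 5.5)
The plan is to run the geometric analogue of the proof of \cref{thm:ht-degree-bound-N} on the model $\cX$. Here $\deg_{1,\cH}(G)$ means the intersection number $\bigl((p^*\cH)^{d}\cdot q^*\cH\bigr)_{\widetilde\Gamma_G}$ on the normalized graph $\widetilde\Gamma_G\subset\cX\times_B\cX$, where $p,q\colon\widetilde\Gamma_G\to\cX$ are the two projections; note that $\dim\cX=d+1$.

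Fix $P\in X_g(\overline K)$ and pass to the finite extension $L=K(P)$, which corresponds to a finite cover $B'\to B$. After base change and normalization, the multisection $V_P$ becomes a section $\sigma_P$ of $\cX_{B'}\to B'$. All intersection numbers scale by $[L:K]$, so it suffices to treat the case where $P$ is $K$-rational and $V_P=\sigma_P(B)$ is a curve on $\cX$.

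First replace $\cH$ by $\cH^{\otimes r}$ so that it is very ample, and embed $\cX\hookrightarrow\bP^N_B$. Then $P=[1:a_1:\dots:a_N]$ with $a_j\in K$. The sections $\ell_j=u_j-a_ju_0$ of $H^{\otimes r}$ cut out exactly $P$ on the generic fiber. On the model they extend to global sections of $\cH^{\otimes r}\otimes\pi^*\cO_B(E)$, where $E$ is an effective divisor on $B$ that clears the poles of the $a_j$, the $u_j$-coefficients being regular. The key estimate, playing the role of \cref{claim:trivial-lb-choose}, is
\[
\deg E\leq C\bigl(h_{\cH}(P)+1\bigr)
\]
with $C$ depending only on $(\cX,\cH)$. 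This holds because $\deg E$ is essentially the pole divisor of $(a_j)$, which is bounded by $r\,h_{\cH}(P)$ plus a constant coming from the fixed embedding.

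Next apply \cref{lem:subspace_cover} over $K$ with $S\subset\bk\subset K$ of size $>(H^d)$. This is possible since $\bk$ is infinite, and it means no archimedean bookkeeping is needed. It produces combinations $s_1,\dots,s_d$ with constant coefficients, so each $s_i$ is still a section of $\cM\coloneqq\cH^{\otimes r}\otimes\pi^*\cO_B(E)$, and $P$ is an isolated point of their common zero locus on $X$. Hence the curve $\sigma_P(B)$ is an irreducible component of $\mathrm{Bs}(s_1,\dots,s_d)$ on $\cX$, of codimension $d$. Pull back to $\widetilde\Gamma_G$. Since $P\notin I(g)$, the strict transform $\widetilde V_P$ of $\sigma_P(B)$ is a component of $\mathrm{Bs}(p^*s_1,\dots,p^*s_d)$ with $q_*\widetilde V_P=V_{g(P)}$. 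The geometric version of \cref{lem:nonproper_intersection} (the standard fact that an isolated component of an excess intersection contributes positively against nef classes; cf.~\cite[Lemma~3.3]{JSXZ24}) then gives
\[
h_{\cH}(g(P))=q^*\cH\cdot\widetilde V_P\leq q^*\cH\cdot(p^*\cM)^d .
\]

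Finally, expand $(p^*\cM)^d$. The first term is $r^d\deg_{1,\cH}(G)$. The remaining terms contain $p^*\pi^*\cO_B(E)$; since a fiber class squared is zero, only the linear term survives, namely $dr^{d-1}\deg E\cdot\deg_{1,H}(g)$, computed on a general fiber. Bounding $\deg_{1,H}(g)\lesssim\deg_{1,\cH}(G)$ is the analogue of \cref{lemma:degree-comparison}. It follows by the same Siu-type argument with a fiber class $F$ in place of $\pi^*\Ebar$: take $t\cH-F$ big on $\cX$, so that $\deg_{1,\cH}(G)\geq t^{-1}\bigl((p^*\cH)^{d-1}\cdot q^*\cH\cdot p^*F\bigr)=t^{-1}\deg_{1,H^{\otimes e}}(g)$. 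Combining these bounds gives \eqref{eq:model-ht-degree-bound-F}. For \eqref{eq:ample-ht-degree-bound-F}, use $h_H^+\asymp e^{-1}h_\cH+O(1)$, $\deg_{1,\cH}(G)\gtrsim 1$, and absorb the constants.

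The main obstacle is the bound on $\deg E$, which is the control of the twist. It must be uniform in $P$, and it must be handled on $\cX_{B'}$, which may be non-normal after base change. I would handle this by normalizing and checking that both $h_\cH$ and the degrees scale exactly by $[L:K]$.
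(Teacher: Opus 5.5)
Your strategy is sound, but it follows a different route from the paper's.

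\textbf{How the two routes differ.} The paper neither base changes nor twists by a divisor from $B$. It applies \cref{lem:section_choose} (Walsh's effective theorem, transported to $\cX$ via a finite projection to $\bP^{d+1}$) directly to the multisection $V_P$. This gives sections $s_i\in H^0(\cX,\cH_1^{\otimes n_i})$ that cut out $V_P$ as a component, with $n_1\cdots n_d\le rC_1\deg_{\cH}V_P=rC_1[K(P):K]\,h_{\cH}(P)$. Then \cite[Lemma~3.3]{JSXZ24} yields $h_{\cH}(g(P))\le C_1r^{d+1}\deg_{1,\cH}(G)\,h_{\cH}(P)$ at once. Since the height of $P$ enters multiplicatively through $\prod n_i$, the paper needs no additive term, no comparison of $\deg_{1,H}(g)$ with $\deg_{1,\cH}(G)$, and no field extension.

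Your route transcribes the number-field proof: make $P$ rational by base change, apply \cref{lem:subspace_cover} with coefficients in $\bk$, twist by the pole divisor $E$, use $(\pi^*E)^2=0$, and compare degrees via $t\cH-F$ pseudo-effective. It avoids Walsh's theorem at the price of this bookkeeping. If the $u_j$ come from an embedding $\cX\hookrightarrow\bP^N_{\bk}\times B$, you even get $\deg E=r\,\cH\cdot\sigma_P(B)$ exactly, with no constant.

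\textbf{Two points must be supplied.}

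First, what your argument actually gives is
\[
h_{\cH}(g(P))\le r^d\deg_{1,\cH}(G)+C\deg_{1,\cH}(G)\,h_{\cH}(P).
\]
But \eqref{eq:model-ht-degree-bound-F} is stated with $h_{\cH}$, not $h_{\cH}^+$. To absorb the first term you need a uniform bound $h_{\cH}(P)\ge\epsilon>0$. Integrality of $\cH\cdot V_P$ is not enough, since it only gives $h_{\cH}(P)\ge 1/[K(P):K]$. The right argument is that $\cH-\epsilon F$ is ample for small rational $\epsilon>0$, where $F$ is a fiber of $\cX\to B$. Hence $\cH\cdot V_P\ge\epsilon\,F\cdot V_P=\epsilon[K(P):K]$.

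Second, in positive characteristic $K(P)/K$ may be inseparable, so $X_{K(P)}$ need not be normal. After normalizing you would have to check that $P$ still lifts to a rational point. It is cleaner not to normalize at all. The scheme $\cX\times_B B'$ is integral, being flat over $B'$ with integral generic fiber $X_{K(P)}$ ($X$ is geometrically integral). Everything you use works on integral projective varieties:
\begin{itemize}
\item intersection numbers and the projection formula;
\item the excess-intersection inequality, which needs only nefness of $\mathcal{M}=\cH^{\otimes r}\otimes\pi^*\cO_B(E)$ (run the induction of \cref{lem:nonproper_intersection}).
\end{itemize}
Also take all constants ($r$, the $u_j$, $t$, $\epsilon$) from the original model and pull them back, so that they are uniform in $P$.
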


\begin{lemma}\label{lem:section_choose}
Let $Y$ be a normal projective variety of dimension $d$ over an
algebraically closed field $\bk$, and let $L$ be a very ample line
bundle on $Y$. There exists a constant $C>0$ depending only on $d$ such that, for every
irreducible subvariety $V\subset Y$ of codimension $r$, there are
positive integers $n_1,\ldots,n_r$ and sections
$s_i\in H^0(Y,L^{\otimes n_i})$ such that $V$ is an irreducible
component of $\mathrm{Bs}(s_1,\ldots,s_r)$ and
\[
    n_1\cdots n_r\leq C\deg_L V.
\]
\end{lemma}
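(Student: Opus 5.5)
The plan is to induct on the codimension $r$, building the sections one at a time. Each new section $s_i$ should cut the current ambient set down by one dimension while keeping $V$ inside it. This is the geometric analogue of \cref{lem:subspace_cover}. The difference is that we no longer have a fixed base locus to work with, so the sections must be chosen with controlled degrees directly from $V$. The key input is that a subvariety $V\subset\bP^M$ of degree $\delta$ is set-theoretically cut out by hypersurfaces of degree at most $\delta$; this is classical, and one proves it using cones over $V$ from generic linear centers. So we embed $Y\hookrightarrow\bP^M$ by $|L|$, and $\deg_L V$ becomes the projective degree of $V$.

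The construction runs as follows. Set $\delta=\deg_L V$ and $Y_0=Y$. Suppose we have already chosen $s_1,\dots,s_{i-1}$ so that $V\subseteq Y_{i-1}$, where $Y_{i-1}$ is the union of those components of $\mathrm{Bs}(s_1,\dots,s_{i-1})$ that contain $V$. Suppose also that these components all have codimension exactly $i-1$. Let $Z_1,\dots,Z_t$ be the components of $\mathrm{Bs}(s_1,\dots,s_{i-1})$ that contain $V$. When $i\leq r$, each $Z_k$ strictly contains $V$ for dimension reasons. Since $V$ is cut out set-theoretically by degree-$\leq\delta$ hypersurfaces, the linear space $W\subset H^0(Y,L^{\otimes \delta})$ of sections vanishing on $V$ has base locus exactly $V$ (multiply by powers of a linear form to bring all degrees up to $\delta$). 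So no $Z_k$ lies in $\mathrm{Bs}(W)$. Because $\bk$ is infinite, a general element $s_i\in W$ avoids all of the finitely many proper subspaces $\{s\in W: Z_k\subseteq\mathrm{div}(s)\}$. Then every component of $Z_k\cap\mathrm{div}(s_i)$ has codimension $i$, and $V$ lies in one of them. Once we reach $i=r$, $V$ sits inside a component of codimension $r$, so $V$ is itself a component of $\mathrm{Bs}(s_1,\dots,s_r)$.

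With this choice every $n_i=\delta$, which gives $n_1\cdots n_r=\delta^r$. That is too large, since we need a bound linear in $\delta$. This degree bookkeeping is the step I expect to be the main obstacle. To fix it, I would refine the construction in the standard way. Take a generic linear projection $\varphi\colon\bP^M\ratmap\bP^{M-r+1}$ (assuming $\dim Y<M$; otherwise the situation is trivial). The cone $\overline{\varphi^{-1}\varphi(V)}$ is then a hypersurface-cone of degree $\delta$ in suitable coordinates. Concretely, $\varphi(V)$ is a hypersurface of degree $\delta$ in $\bP^{d-r+1}$ after an additional generic projection. Its pullback gives one section of degree $\delta$, and the remaining $r-1$ sections can be pulled back linear forms of degree one. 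The aim is to show that for generic choices of centers, $V$ remains an irreducible component of the intersection of the cone over $\varphi(V)$ with $r-1$ general hyperplane-type cones through $V$. This is exactly the statement that generic projections of $V$ are birational onto their image and finite. One then restricts to $Y$, shrinking dimension correctly via the genericity argument of the previous paragraph.

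The resulting degrees are $n_1=\delta$ and $n_2=\dots=n_r=1$. More robustly, I would allow $n_i\leq\deg$ of successive cones, and an extra factor bounded in terms of $d$ absorbs the constant $C$ when the linear-section step must be replaced by degree-two forms to control excess components. This gives $n_1\cdots n_r\leq C(d)\deg_L V$.
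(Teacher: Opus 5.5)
\textbf{There is a genuine gap: your repair of the degree bound does not work.}

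Your first construction is sound: you take general members of the linear system of degree-$\delta$ sections vanishing on $V$, cutting down one dimension at a time. But as you observe, it only yields $n_1\cdots n_r=\delta^r$ with $\delta=\deg_L V$. The proposed fix fails.

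\begin{itemize}
\item The claim that one can take $n_1=\delta$ and $n_2=\cdots=n_r=1$ is false in general, already for $Y=\bP^3$ and $L=\cO_{\bP^3}(1)$. A degree-one form vanishing on $V$ need not exist at all: the twisted cubic lies in no plane.
\item The auxiliary degrees cannot be bounded by a constant depending on $d$ either. Let $V\subset\bP^3$ be a smooth complete intersection of two general surfaces of degree $a$. Its homogeneous ideal is generated in degree $a$, so every surface containing $V$ has degree at least $a=\sqrt{\delta}$. Any admissible choice therefore has $n_1,n_2\geq\sqrt{\delta}$: the degrees must be balanced, not concentrated in one section.
\item Cones from generic projections naturally produce hypersurfaces of degree $\delta$, which is exactly why they only give $\delta^r$.
\end{itemize}

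The missing ingredient is the linear bound $n_1\cdots n_r\leq C(d)\deg W$ for an irreducible codimension-$r$ subvariety $W\subset\bP^d$. This is a genuinely nontrivial theorem of Walsh, which the paper cites as \cite[Theorem~1.3]{Walsh20}. Your last paragraph, with its ``extra factor'' and ``degree-two forms'', does not supply a substitute for it. Note also that the case $Y=\bP^d$, which you dismiss as trivial, is precisely the content of Walsh's theorem.

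\textbf{A second issue is the ambient dimension.} The constant must depend only on $d$, but you work in $\bP^M$ with $M=h^0(Y,L)-1$, which is not bounded in terms of $d$. Any ambient-space estimate there would depend on $M$. It would also produce $M-\dim V$ hypersurfaces rather than the required $r$.

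The paper's argument avoids this as follows.
\begin{itemize}
\item Take a finite surjective morphism $p\colon Y\to\bP^d$ with $p^*\cO_{\bP^d}(1)\cong L$, given by a generic linear projection of the $|L|$-embedding.
\item Set $W=p(V)$. It has codimension $r$, and $\deg W\leq\deg_L V$ because $p_*[V]=[\bk(V):\bk(W)]\,[W]$.
\item Apply Walsh's theorem to $W\subset\bP^d$ to get forms $\ell_1,\ldots,\ell_r$, and pull them back to $Y$.
\item Since $p$ is finite, any irreducible component of $p^{-1}(\mathrm{Bs}(\ell_1,\ldots,\ell_r))$ containing $V$ maps onto a component containing $W$, hence onto $W$. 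It therefore has dimension $\dim V$ and equals $V$.
\end{itemize}
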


\begin{proof}
Choose a finite surjective morphism $p\colon Y\to\bP^d$ such that
$L\simeq p^*\mathcal{O}_{\bP^d}(1)$. Set $W=p(V)$. Since $p$ is finite, $W$ is an irreducible subvariety
of $\bP^d$ of codimension $r$. Let $e\coloneqq [\bk(V):\bk(W)]$. Since $p_*[V]=e[W]$, the projection
formula gives
\[
\begin{aligned}
    \deg_L V=L^{d-r}\cdot V=\mathcal{O}_{\bP^d}(1)^{d-r}\cdot p_*[V]=e\deg_{\mathcal{O}_{\bP^d}(1)}W\geq\deg_{\mathcal{O}_{\bP^d}(1)}W.
\end{aligned}
\]

By \cite[Theorem 1.3]{Walsh20}, there exist $n_1,\ldots,n_r\in\bZ_{>0}$ and sections
$\ell_i\in H^0(\bP^d,\mathcal{O}_{\bP^d}(n_i))$ such that
$W$ is an irreducible component of $\mathrm{Bs}(\ell_1,\ldots,\ell_r)$ and
\[
    n_1\cdots n_r\leq C\deg_{\mathcal{O}_{\bP^d}(1)}W,
\]
where $C$ depends only on $d$.

Set $s_i=p^*\ell_i\in H^0(Y,L^{\otimes n_i})$. It suffices to prove $V$ is an irreducible component of $ \mathrm{Bs}(s_1,\ldots,s_r)=p^{-1}\bigl(\mathrm{Bs}(\ell_1,\ldots,\ell_r)\bigr)$. Suppose $V'\subset \mathrm{Bs}(s_1,\ldots,s_r)$ is an irreducible component containing $V$. Then $p(V')$ is an irreducible component of
$\mathrm{Bs}(\ell_1,\ldots,\ell_r)$ containing $W$. So $p(V')=W$, which implies $\dim V'=\dim W=\dim V$. Hence $V'=V$, which completes the proof.
\end{proof}

\begin{proof}[Proof of \cref{thm:ht-growth-bound-F}]
Choose a positive integer $r$ such that $\mathcal{H}_1=\mathcal{H}^{\otimes r}$ is a very ample line bundle on $\mathcal{X}$.
Fix $P\in X_g(\overline{K})$.
By \cref{lem:section_choose}, there exist a constant $C_1\geq1$, positive integers
$n_1,\ldots,n_d$, and sections
$s_i\in H^0(\mathcal{X},\mathcal{H}_1^{\otimes n_i})$ such that
$V_P$ is an irreducible component of
$\mathrm{Bs}(s_1,\ldots,s_d)$ and
\[
    n_1\cdots n_d\leq C_1\deg_{\mathcal{H}_1}V_P=rC_1\deg_{\mathcal{H}}V_P.
\]
Note that $C_1\geq1$ depends only on $d$.

Let $\Gamma\subset\mathcal{X}\times_{\bk}\mathcal{X}$ be the closure
of the graph of $G$, let $\widetilde{\Gamma}$ be its normalization,
and denote the natural projections by
\[
    \pi_1,\pi_2\colon \widetilde{\Gamma}\longrightarrow\mathcal{X}.
\]
Since $P\notin I(g)$, the morphism $\pi_1$ is an isomorphism over
a neighborhood of the generic point $P$ of $V_P$.
Consequently, the strict transform $\pi_1^{\#}V_P$ is an
irreducible component of
$\mathrm{Bs}(\pi_1^*s_1,\ldots,\pi_1^*s_d)$.
Moreover,
\[
    \pi_{2,*}[\pi_1^{\#}V_P]
    =[K(P):K(g(P))]\,[V_{g(P)}].
\]
By the projection formula and \cite[Lemma 3.3]{JSXZ24}, we obtain
\[
\begin{aligned}
    h_{\mathcal{H}}(g(P))
    &=\frac{\mathcal{H}\cdot V_{g(P)}}{[K(g(P)):K]}=\frac{\mathcal{H}\cdot\pi_{2,*}\pi_1^{\#}V_P}
            {[K(P):K]}=\frac{\pi_2^*\mathcal{H}\cdot \pi_1^{\#}V_P}
            {[K(P):K]}\\
    &\leq(n_1\cdots n_d)
        \frac{\pi_2^*\mathcal{H}\cdot(\pi_1^*\mathcal{H}_1)^d}
             {[K(P):K]}\\
    &\leq C_1r^{d}\deg_{1,\mathcal{H}}(G)
        \frac{\deg_{\mathcal{H}_1}V_P}{[K(P):K]}\\
    &=C_1r^{d+1}\deg_{1,\mathcal{H}}(G)\,h_{\mathcal{H}}(P).
\end{aligned}
\]
Taking $C_{\mathcal{X},\mathcal{H}}\coloneqq C_1r^{d+1}$ proves \eqref{eq:model-ht-degree-bound-F}, while the inequality~\eqref{eq:ample-ht-degree-bound-F} follows from the standard comparison of ample heights $h_H^+ \asymp h_{\mathcal{H}}^+$.
\end{proof}

\begin{lemma}\label{lem:maximal_affine_interval-F}
With the notation above, let $f\colon X\dashrightarrow X$ be a dominant
rational self-map extending to a dominant rational self-map
$F\colon \mathcal{X}\dashrightarrow\mathcal{X}$.
\begin{enumerate}[label=\textnormal{(\arabic*)}]
\item If $\lambda_1(f)>1$, then an interval $[0,s]$ is a maximal
affine interval of $(\log\lambda_i(f))_{i=0}^d$ if and only if
it is a maximal affine interval of
$(\log\lambda_i(F))_{i=0}^{d+1}$.

\item If $\lambda_1(f)=1$, then $\lambda_i(F)=1$ for all
$0\leq i\leq d+1$. In particular, $[0,d+1]$ is the maximal affine
interval of $(\log\lambda_i(F))_{i=0}^{d+1}$.
\end{enumerate}
\end{lemma}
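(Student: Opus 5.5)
The plan mirrors the proof of \cref{lem:maximal_affine_interval-N}; the only new ingredient is the function-field analogue of the relative degree formula \eqref{eq:relative-degree-formula}.

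\textbf{Step 1: relative dynamical degree formula.} I will first establish
\[
\lambda_k(F)=\max\{\lambda_k(f),\lambda_{k-1}(f)\}\qquad\text{for } 1\leq k\leq d,
\]
together with $\lambda_0(F)=1$ and $\lambda_{d+1}(F)=\deg(F)=\deg(f)=\lambda_d(f)$. The map $F$ is a dominant rational self-map of $\cX$ commuting with the structure morphism $\cX\to B$, and it covers the identity on $B$. The base therefore has all dynamical degrees equal to $1$, and the generic fiber of $\cX\to B$ is $X$ with the restriction $f$.

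The Dinh--Nguy\^en--Truong relative dynamical degree formula \cite{DN11,DNT12}, in Truong's version \cite{Truong20} valid in arbitrary characteristic, gives
\[
\lambda_k(F)=\max_{\max(0,k-1)\leq j\leq \min(k,d)}\lambda_j(f|_{\text{fiber}})\,\lambda_{k-j}(\mathrm{id}_B).
\]
Only $j=k$ and $j=k-1$ can occur, which yields the claimed formula. Here the relative dynamical degrees of $F$ over $B$ coincide with the dynamical degrees of $f$ on the generic fiber $X$, since they are computed by intersecting with a general fiber, i.e., by geometric degrees over $K$.

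This is the step I expect to be the main obstacle. One has to check that the cited relative formula applies to normal (possibly singular) projective varieties over an arbitrary algebraically closed $\bk$, and that the relative degrees really equal $\lambda_k(f)$. If needed, I would pass to a resolution or alteration, using the birational (or generically finite) invariance of dynamical degrees from \cite{Dang20,Truong20}.

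\textbf{Step 2: combinatorics, as in \cref{lem:maximal_affine_interval-N}.} Log-concavity gives unique $0\leq p\leq q\leq d$ with
\[
1=\lambda_0<\lambda_1<\dots<\lambda_p=\dots=\lambda_q>\dots>\lambda_d.
\]
Log-concavity of the sequence $(\lambda_i(F))_{i=0}^{d+1}$ gives analogous $p',q'$. By Step 1, $\lambda_k(F)=\lambda_k(f)$ for $k\leq p$ and $\lambda_k(F)=\lambda_{k-1}(f)$ for $k\geq q+1$, with the plateau extended by one index; hence $p'=p$ and $q'=q+1$.

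For (1), assume $\lambda_1(f)>1$, so $p\geq1$. On $[0,p]$ the two sequences agree. The maximal affine interval starting at $0$ lies inside $[0,p]$ for both sequences, because $\log\lambda$ is strictly increasing there and an affine interval containing $0$ cannot extend past the plateau start unless $p=0$. It follows that the maximal affine intervals $[0,s]$ coincide.

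For (2), assume $\lambda_1(f)=1$. Log-concavity then forces $\lambda_i(f)=1$ for all $i$, since $\lambda_0=\lambda_1=1$ and the sequence is log-concave and positive, so it is nonincreasing from $1$. Step 1 then gives $\lambda_i(F)=1$ for all $0\leq i\leq d+1$.
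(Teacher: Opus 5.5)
Your proposal is correct and follows the paper's proof. The paper likewise cites the relative dynamical degree formula (\cite[Theorem~1.3]{Truong20}, cf.\ \cite[Theorem~B]{MX26}) to get $\lambda_i(F)=\max\{\lambda_i(f),\lambda_{i-1}(f)\}$, $\lambda_0(F)=1$ and $\lambda_{d+1}(F)=\lambda_d(f)$, and then reuses the combinatorics of \cref{lem:maximal_affine_interval-N} ($p'=p$, $q'=q+1$). One small addition is needed in (2): ``nonincreasing from $1$'' only gives $\lambda_i(f)\leq 1$, so you also need $\lambda_i(f)\geq 1$, for instance from $\lambda_d(f)=\deg(f)\geq 1$ together with monotonicity, a point the paper also leaves implicit.
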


\begin{proof}
By the relative dynamical degree formula (see \cite[Theorem~1.3]{Truong20}; also known as the mixed degree formula in \cite[Theorem~B]{MX26}), we have
\[
    \lambda_i(F)
    =\max\{\lambda_i(f),\lambda_{i-1}(f)\},
    \qquad 1\leq i\leq d.
\]
Moreover, $\lambda_0(F)=1$ and $\lambda_{d+1}(F)=\deg(F)=\deg(f)=\lambda_d(f)$. The result follows by the same argument as in \cref{lem:maximal_affine_interval-N}.
\end{proof}

\subsection{Proof of Theorem \ref{thm:ht-growth-upper-bound}}

\begin{proof}[Proof of \cref{thm:ht-growth-upper-bound}]
In \cref{setting:number-field}, let $\overline{H}\in \widehat{\mathrm{Pic}}(X)$ be a nef and big adelic line bundle extending the ample line bundle $H$ on $X$.
In \cref{setting:function-field}, let $(\mathcal{X},\mathcal{H})$ be a model of $(X,H^{\otimes e})$ over $B$ for some $e\gg1$ and let $F\colon \mathcal{X}\dashrightarrow\mathcal{X}$ denote the dominant rational self-map of $\cX$ extending $f$.
Then by applying \cref{thm:ht-degree-bound-N,thm:ht-growth-bound-F} to $g=f^n$, respectively, there is a constant $C_1\geq1$ such that for every $n\geq1$ and $P\in X_f(\Kbar)$,
\[
h_H^+(f^n(P))\leq C_1a_nh_H^+(P),
\]
where
\[
a_n=
\begin{cases}\widehat{\deg}_{1,\Hbar}(f^n), & \text{in \cref{setting:number-field}},\\
\deg_{1,\cH}(F^n), & \text{in \cref{setting:function-field}}.
\end{cases}
\]

Let $[0,s]$ be the maximal affine interval of $(\log\widehat{\lambda}_i(f))_{i=0}^{d+1}$ or $(\log\lambda_i(F))_{i=0}^{d+1}$, respectively.
Then by \cref{thm:arithmetic-degree-growth-upper-bound} and \cite[Theorem~1.1]{Hu-degree-growth}, there is a constant $C_2>0$, such that
\[
a_n \leq C_2 n^{2(s-1)}
\begin{cases}\widehat{\lambda}_{1}(f)^n, & \text{in \cref{setting:number-field}},\\
\lambda_1(F)^n, & \text{in \cref{setting:function-field}}.
\end{cases}
\]
Finally, by \cref{lem:maximal_affine_interval-N} and \cref{lem:maximal_affine_interval-F}, respectively, we know that if $\lambda_1(f)=1$ then $s=d+1$, and both $\widehat{\lambda}_{1}(f)$ and $\lambda_1(F)$ are equal to $1$.
If $\lambda_1(f)>1$, the same lemmas show that $\widehat{\lambda}_{1}(f)=\lambda_1(f)$, $\lambda_1(F)=\lambda_1(f)$ in the respective settings, and that $[0,s]$ is also the maximal affine interval of $(\log\lambda_i(f))_{i=0}^d$.
In either case, taking $C_{f,h_H}\coloneqq C_1C_2$ concludes the proof of \cref{thm:ht-growth-upper-bound}.
\end{proof}

\begin{remark}\label{rmk:jump-explanation}
The proof of \cref{thm:ht-growth-upper-bound} explains the different
polynomial exponents in \eqref{eq:HI-ab-var-case}.
In both settings, the exponent is $2(s-1)$, where $[0,s]$ is the initial maximal affine interval of
\[
(\log\widehat{\lambda}_i(f))_{i=0}^{d+1}
\quad\text{or}\quad
(\log\lambda_i(F))_{i=0}^{d+1},
\]
in the number-field and function-field settings, respectively.
If $\lambda_1(f)>1$, this interval coincides with the initial maximal affine interval of $(\log\lambda_i(f))_{i=0}^{d}$, so $s\leq d$.
If $\lambda_1(f)=1$, the extended sequence is constant, so $s=d+1$.
\end{remark}

\bibliographystyle{amsalpha}
\bibliography{mybib}

\end{document}